\newtheorem{theorem}{Theorem}
\newtheorem{lemma}{Lemma}
\newtheorem{proposition}{Proposition}
\newtheorem{definition}{Definition}
	\newtheorem{corollary}{Corollary}
\newtheorem{claim}{Claim}
 \theoremstyle{definition}
 \theoremstyle{remark}
 \numberwithin{equation}{section}
\newcommand{\vertiii}[1]{{\left\vert\kern-0.25ex\left\vert\kern-0.25ex\left\vert #1
    \right\vert\kern-0.25ex\right\vert\kern-0.25ex\right\vert}}
\newcommand{\f}[2]{\frac{#1}{#2}}
\newcommand{\cl}{{\mathcal L}}
\newcommand{\al}{\alpha}
\newcommand{\ga}{\gamma}
\newcommand{\de}{\delta}
\newcommand{\la}{\lambda}
\newcommand{\si}{\sigma}
\newcommand{\vp}{\varphi}
\newcommand{\om}{\omega}
\newcommand{\rone}{\mathbb R}
\newcommand{\dpr}[2]{\langle #1,#2 \rangle}
\newcommand{\eps}{\epsilon}
\newcommand{\cm}{\mathcal M}
\newcommand{\cd}{\mathcal D}
\newcommand{\ch}{\mathcal H}
\newcommand{\p}{\partial}
\newcommand{\beq}{\begin{equation}}
\newcommand{\eeq}{\end{equation}}
\newcommand{\beqna}{\begin{eqnarray*}}
\newcommand{\eeqna}{\end{eqnarray*}}
\newcommand{\beqn}{\begin{equation*}}
\newcommand{\eeqn}{\end{equation*}}
\newcommand{\bp}{\begin{proof}}
\newcommand{\ep}{\end{proof}}
\newcommand{\bprop}{\begin{proposition}}
\newcommand{\eprop}{\end{proposition}}
\newcommand{\bt}{\begin{theorem}}
\newcommand{\et}{\end{theorem}}
\newcommand{\bex}{\begin{Example}}
\newcommand{\eex}{\end{Example}}
\newcommand{\bc}{\begin{corollary}}
\newcommand{\ec}{\end{corollary}}
\newcommand{\bcl}{\begin{claim}}
\newcommand{\ecl}{\end{claim}}
\newcommand{\bl}{\begin{lemma}}
\newcommand{\el}{\end{lemma}}
\newcommand{\cj}{{\mathcal J}}
\begin{document}

\title
[Existence and stability for the travelling waves of the Benjamin equation]{ Existence and stability for the travelling waves of the Benjamin equation}

 	\author{Sevdzhan Hakkaev} 
 \address{Sevdzhan Hakkaev, ORCID 0000-0002-1531-8340, Department of Mathematics, Faculty of Science, Trakya University, \\ 22030 Edirne, Turkey and \\ Institute of Mathematics and Informatics, \\ Bulgarian Academy of Sciences, Acad. G. Bonchev Str. bl. 8, 1113 Sofia, Bulgaria}
 \email{s.hakkaev@shu.bg}
 
 \author[Milena Stanislavova]{\sc Milena Stanislavova}
 \address{Milena Stanislavova, ORCID 0000-0002-7144-3613, Department of Mathematics, University of Alabama - Birmingham,
 	1402 10th Avenue South
 	Birmingham AL 35294, USA
 }
 \email{mstanisl@uab.edu}
 \author[Atanas G. Stefanov]{\sc Atanas G. Stefanov}
 \address{Atanas G. Stefanov, ORCID 0000-0002-0934-506X\\  Department of Mathematics, University of Alabama - Birmingham,
 	1402 10th Avenue South
 	Birmingham AL 35294, USA.
 }
\email{stefanov@uab.edu}

\subjclass[2000]{35Q53, 76B25}
\keywords{ Benjamin equation, travelling waves, stability}

\date{\today}

\begin{abstract}
   In the seminal work of Benjamin,\cite{Ben} in the late 70's,  he has derived the ubiquitous Benjamin model, which is a reduced model in the theory of  water waves. Notably,  it  contains two parameters in its dispersion part and  under some special circumstances, it  turns into  the celebrated KdV or the Benjamin-Ono equation, During the90's, there was renewed interest in it.   Benjamin, \cite{Ben1}, \cite{Ben2} studied the problem for existence of solitary waves, followed by works of  Bona-Chen, \cite{BC}, Albert-Bona-Restrepo, \cite{ABR}, Pava, \cite{Pava1}, who have showed the existence of travelling  waves, mostly by variational, but also bifurcation methods. Some results about the stability became available, but unfortunately, those were restricted to either small waves or Benjamin model, close to a distinguished (i.e. KdV or BO) limit.  Quite recently, in \cite{ADM},    Abdallah, Darwich and Molinet, proved existence,  orbital stability and  uniqueness results for  these waves, but only  for  large values of  $\f{c}{\ga^2}>>1$.

   In this article, we present an alternative constrained maximization procedure for the construction of these waves, for the full range of the parameters, which allows us to ascertain their spectral stability. Moreover, we extend this construction   to all $L^2$ subcritical cases (i.e. power nonlinearities $(|u|^{p-2}u)_x$, $2<p\leq 6$). Finally, we propose a different procedure, based on a specific form of the Sobolev embedding inequality, which works for all powers $2<p<\infty$, but produces some unstable waves, for large $p$. Some open questions and a conjecture regarding this last result  are proposed for further investigation. 
   	
\end{abstract}

\maketitle

  \section{Introduction}
  We consider the Benjamin equation 
  \begin{equation}
  	\label{4} 
  	u_t-u_{xxx}-\ga D u_x + 2 u u_x=0, \ \ (t,x)\in \rone_+\times \rone. 
  \end{equation}
   where $D=H\p_x$, where $H$ is the Hilbert transform. For equivalent Fourier symbol representation, see \eqref{a:11} below. 
   
   The model \eqref{4}  has been studied quite a bit since it first appeared, more than 50 years ago, in the work of Benjamin, \cite{Ben}. In this work, Benjamin used asymptotic scaled expansions,  and showed its derivation from the basic water wave models.  Benjamin came back to the study of the traveling waves some twenty years later in \cite{Ben1}, \cite{Ben2}, in which he showed the existence of these waves via degree type arguments, while the stability question remained largely unadressed. Travelling waves for the full Benjamin equation were also previously constructed, in the late 90's, by Bona and Chen, \cite{BC} and Albert-Bona-Restrepo, \cite{ABR}, with some orbital stability results (for waves of Benjamin, where the equation is close to the KdV limit) appearing in \cite{ABR}. Some more stability results were proved around the same time \cite{Pava1}, for waves with small $L^2$ norm. Very recently, in a work of Abdallah-Darwich-Molinet, \cite{ADM}, the authors employed a different constrained minimization scheme, which allowed them to show uniqueness and orbital stability for waves with sufficiently large, relative to the dispersion parameter $\ga$ wave speeds.

\subsection{An equivalent rescaled form of the Benjamin equation}
We perform a rescaling argument of the equation \eqref{4}.  To this end,   let  $u(t,x)=\phi(x+c t)$, with $\phi$ vanishing at both $\pm \infty$. Plugging this ansatz in \eqref{4}, and after taking into account the vanishing of $\phi$, we can integrate once (with constant of integration equal to zero). We obtain the profile equation
\begin{equation}
	\label{10} 
	-\phi''-\ga D\phi+c \phi +\phi^2=0.
\end{equation}
   A rescaling transformation like $\phi\to -\f{\ga}{2} \phi(\f{\ga}{2}\cdot)$ will transform the problem into an equivalent one  
    \begin{equation}
   	\label{11} 
   	-\phi''- 2 D\phi+4c \ga^{-2} \phi - \phi^2=0.
   \end{equation}
   A well-known principle in physics suggests that well-localized solutions of \eqref{11} do not support eigenvalues embedded inside the essential spectra. As zero mode is always present, one should expect solutions to  exists only if the dispersion relation, 
   $\xi^2 - 2 |\xi| +4c\ga^{-2}>0$ for all values of $\xi$. This  implies that $4c\ga^{-2}>1$ or $c>\f{\ga^2}{4}$. This is a well-known existence condition, imposed by   all previous works, \cite{BC}, \cite{Pava1},  \cite{ABR}, \cite{ADM}. 
   
    Clearly, the rescaled equation \eqref{11} depends on one parameter only, which in view of the requirement $4c\ga^{-2}>1$, can be written via  the new parameter $\om:=4c\ga^{-2}-1>0$. Henceforth, we consider an equivalent version of the problem  \eqref{11}
   \begin{equation}
   	\label{12} 
   	-\phi''- 2 D\phi+(\om+1)  \phi - \phi^2=0.
   \end{equation}
   Note that due to the fact  $D^2=H^2\p_x^2=-\p_x^2$, we can equivalently rewrite in the more convenient form 
   \begin{equation}
   	\label{14} 
   	(D-1)^2 \phi+\om \phi- \phi^2=0.
   \end{equation}
An associated change of variables in the Benjamin equation \eqref{4}, namely 
$$
u(t,x)\to -\f{\ga}{2} u\left(\left(\f{\ga}{2}\right)^3 t, \f{\ga}{2}(x+c t)\right),
$$
 leads to the following convenient version of the Benjamin equation, in its  traveling wave ansatz
\begin{equation}
	\label{b:10} 
	u_t +\p_x ((D-1)^2 u+\om u  - u^2)=0.
\end{equation}
Clearly now the solutions $\phi$ of \eqref{14} are nothing but steady  solutions of \eqref{b:10}, and the study of the stability of the traveling wave solutions of \eqref{4} reduces to the stability of the corresponding steady states for \eqref{b:10}. 
    To this end,  we consider perturbations of such steady states and their respective stability properties. Specifically, let $u(t,x)=\phi(x)+ v(t,x)=\phi(x)+ e^{\la t} v$ and plug  this in \eqref{b:10}. After ignoring $O(v^2)$ terms, we arrive at the linearized/eigenvalue  problem 
  $$
    	\p_x ((D-1)^2 +\om - 2 \phi) v=-\la v
   $$
   Upon introducing the self-adjoint linearized operator\footnote{Here, we need to know some {\it a priori} decay  properties of the potential $\phi$, in order to have a well-defined self-adjoint operator. As we shall see later, $\phi$ is a smooth function, with somewhat limited decay, namely $|\phi(x)|\leq C (1+x^2)^{-1}$. This is however  enough for our purposes.}
   $$
   \cl_+:= (D-1)^2 +\om - 2 \phi, D(\cl_+)=H^2(\rone), 
   $$
  we can view the eigenvalue problem in the form $\p_x \cl_+ v =-\la v$. Due to its Hamiltonian structure (and the associated spectral symmetry with respect to real and imaginary axes), we arrive at the equivalent spectral problem 
  \begin{equation}
  	\label{b:15} 
  	\p_x \cl_+ v=\la v.
  \end{equation}
   This type of Hamiltonian eigenvalue problem (where $\p_x$ is a particular skew symmetric operator, while $\cl_+$ is self-adjoint) is sometimes referred to as KdV-type eigenvalue problem.   
   Before we continue with our presentation,  let us take a derivative in the spatial variable in \eqref{14}, and record the result in the form
   \begin{equation}
   	\label{17} 
   \cl_+[\phi']=0.
   \end{equation}
In other words, $span[\phi']\subset Ker[\cl_+]$. It is a standard expectation\footnote{but unfortunately hard to rigorously verify in specific examples, especially when  non-local operators are involved, the case here} that this is the only element of $Ker[\cl_+]$, see Definition \ref{defi:10} below.   
Next, we recall various standard notions. 
   \begin{definition}
   	\label{defi:10} 
   	The traveling wave solution $\phi$ is \underline{spectrally stable}, if 
   	$\si(\p_x \cl_+)\subset i\rone$. 
   	Equivalently, $\phi$ is  spectrally unstable, if the eigenvalue problem \eqref{b:15} possesses  a non-trivial solution, with $\Re\la>0, v\in H^3(\rone), v\neq 0$.  
   	
   	The wave $\phi$ is called \underline{non-degenerate}, if in fact $Ker[\cl_+]=span[\phi']$. We say that $\phi$ is weakly non-degenerate, if $\phi\perp Ker[\cl_+]$. 
   \end{definition}
\subsection{Extensions to generalized Benjamin models}
Following similar extensions considered for other fluid equations, such as KdV and Benjamin-Ono, and in the context of \eqref{b:10}, we consider the focusing generalized Benjamin equation
\begin{equation}
	\label{b:30} 
	u_t +\p_x ((D-1)^2 u+\om u  - |u|^{p-2} u)=0, p>2.
\end{equation}
 Let us mention that the related version, with non-linearity $u^{p-1}$ and integer $p$ is also a possibility, but since the analysis is, for all practical purposes similar, we concentrate on \eqref{b:30}. Localized steady states of such models satisfy an elliptic problem similar to \eqref{14}. Note that we expect $\phi$ to be a sign changing solution\footnote{numerics in \cite{ABR} confirm that,  in the classical case of quadratic nonlinearity},   so we need to keep the absolute value in the non-linearity.  Specifically, we obtain 
 \begin{equation}
 	\label{b:50} 
 	(D-1)^2\phi +\om \phi - |\phi|^{p-2} \phi=0,
 \end{equation}
and once existence is established, one can study the associated stability problem. This takes the form 
\begin{eqnarray}
	\label{b:60} 
	\p_x\cl_+ v=\la v,
\end{eqnarray}
where $ \cl_+=(D-1)^2+\om - (p-1) |\phi|^{p-2}$. 
We are now ready to state the main results. 
\subsection{Main results}
We present first the result about the classical Benjamin model, in its equivalent form \eqref{b:10}. 
\subsubsection{Existence and stability of the Benjamin travelling waves}
\begin{theorem}
	\label{theo:10} 
	There exists a one parameter family of functions $\{\phi_\al\}_{\al>0}$, with the following properties 
	\begin{itemize}
		\item $\phi_\al\in H^\infty(\rone)$ (see also \cite{BL}, \cite{ADM}) 
		solves \eqref{14} with 
		\begin{equation}
			\label{c:10} 
			\om=\om_\al=\al+\f{\|(D-1)\phi_\al\|^2+\al \|\phi_\al\|^2}{2\|\phi_\al\|^2}. 
		\end{equation}
	Moreover, there is the asymptotic\footnote{This also appears in  \cite{BL}, \cite{ADM}, so we will not dwell too much on its proof. The sharp decay rate of $|\phi$ is inherited from  the decay of the Green's function $G: \hat{G}=\f{1}{\xi^2-2|\xi|+\om+1}$, which turns out to be $|G(x)|\leq C(1+|x|^2)^{-1}$.}
	$$
	\lim_{x\to \pm \infty} |x|^2 |\phi(x)|=K\neq 0,
	$$
	
		\item Each $\phi_\al, \al>0$ is spectrally stable steady solution of \eqref{b:10} with $\om=\om_\al$, in the sense of Definition \eqref{defi:10}. 
		\item the mapping $\al\to \|\phi_\al\|$ is continuous and strictly increasing, with 
		$$
		\lim_{\al\to 0+} \|\phi_\al\|=0, \ \ \lim_{\al\to +\infty} \|\phi_\al\|=+\infty.
		$$
	\end{itemize}
\end{theorem}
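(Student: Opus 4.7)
The plan is to construct $\{\phi_\alpha\}_{\alpha>0}$ as maximizers of a scale-invariant quotient tailored so that its Euler-Lagrange equation, after an explicit rescaling, is exactly \eqref{14} with the Lagrange multiplier matching \eqref{c:10}. Concretely, for each $\alpha>0$ set
\[
M_\alpha(\phi)\;=\;\frac{\bigl(\int \phi^3\bigr)^2}{\bigl(\|(D-1)\phi\|^2+\alpha\|\phi\|^2\bigr)^{2}\,\|\phi\|^2}, \qquad m(\alpha)\;:=\;\sup_{0\ne\phi\in H^1(\rone)}M_\alpha(\phi).
\]
A direct computation of the first variation shows that any nonzero critical point obeys
\[
(D-1)^2\phi+\bigl(\alpha+\tfrac{\|(D-1)\phi\|^2+\alpha\|\phi\|^2}{2\|\phi\|^2}\bigr)\phi\;=\;\tfrac{3(\|(D-1)\phi\|^2+\alpha\|\phi\|^2)}{2\int\phi^3}\,\phi^2,
\]
and (assuming $\int\phi^3>0$, which may be arranged by sign flip) the rescaling $\phi\mapsto\frac{3(\|(D-1)\phi\|^2+\alpha\|\phi\|^2)}{2\int\phi^3}\phi$, which preserves the scale-invariant ratio $(\|(D-1)\phi\|^2+\alpha\|\phi\|^2)/\|\phi\|^2$, normalizes the coefficient of $\phi^2$ to $1$ and produces \eqref{14} with $\omega_\alpha$ exactly as in \eqref{c:10}. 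Since $\omega_\alpha>\alpha>0$, the required positivity of the symbol $(|\xi|-1)^2+\omega_\alpha$ is automatic.

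Existence of a maximizer proceeds by concentration compactness. The symbol estimate $c_\alpha(\alpha+\xi^2)\le(|\xi|-1)^2+\alpha\le C_\alpha(\alpha+\xi^2)$ gives $\|(D-1)\phi\|^2+\alpha\|\phi\|^2\asymp\|\phi\|_{H^1}^2$ (constants depending on $\alpha$), and the Gagliardo-Nirenberg bound $\int|\phi|^3\le C\|\phi\|^{5/2}\|\phi\|_{H^1}^{1/2}$ then shows $m(\alpha)\in(0,\infty)$. For a maximizing sequence normalized by $\|\phi_n\|=1$, $H^1$-boundedness follows because $\|\phi_n\|_{H^1}\to\infty$ would force $M_\alpha(\phi_n)\to 0$. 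Lions' concentration compactness on $|\phi_n|^2\,dx$ then excludes vanishing (which would give $\int\phi_n^3\to 0$, contradicting $m(\alpha)>0$) and dichotomy (by the standard strict super-additivity argument, showing that splitting the mass strictly decreases the sup-quotient), leaving tightness after translation. Weak $H^1$ convergence, strong $L^3_{\loc}$ convergence, and lower semicontinuity of the denominator then yield a maximizer; the rescaling above delivers $\phi_\alpha$ solving \eqref{14}. Smoothness and the decay $|\phi_\alpha(x)|\lesssim(1+x^2)^{-1}$ come, respectively, by Fourier bootstrap in $\phi_\alpha=[(D-1)^2+\omega_\alpha]^{-1}\phi_\alpha^2$ and from the Green's kernel asymptotics as in \cite{BL,ADM}.

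For spectral stability I would apply the standard KdV-type instability index identity
\[
k_r+2k_c+2k_i^-\;=\;n(\cl_+)-n_0
\]
for the eigenvalue problem \eqref{b:15}, with $n_0=1$ iff $\frac{d}{d\omega_\alpha}\|\phi_\alpha\|^2>0$ and $0$ otherwise. Testing \eqref{14} against $\phi_\alpha$ gives $\langle\cl_+\phi_\alpha,\phi_\alpha\rangle=\|(D-1)\phi_\alpha\|^2+\omega_\alpha\|\phi_\alpha\|^2-2\int\phi_\alpha^3=-\int\phi_\alpha^3<0$, so $n(\cl_+)\ge 1$. The reverse bound $n(\cl_+)\le 1$ is extracted from the second-variation inequality $M_\alpha''(\phi_\alpha)[\eta,\eta]\le 0$ at the maximizer: after quotienting the scale-invariance kernel direction $\eta=\phi_\alpha$ (which generates a spurious zero of $M_\alpha''$), the inequality translates into non-positivity of $\cl_+$ on a codimension-one subspace of $H^1$, hence $n(\cl_+)\le 1$ by min-max. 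Weak non-degeneracy $\phi_\alpha\perp Ker(\cl_+)$ is obtained by testing \eqref{14} against elements of $Ker(\cl_+)$. The Vakhitov-Kolokolov condition is then inherited from the third bullet via the chain rule: the implicit function theorem applied to \eqref{14} using weak non-degeneracy produces a $C^1$ branch $\alpha\mapsto(\phi_\alpha,\omega_\alpha)$ along which the envelope identity $m'(\alpha)=-2m(\alpha)\|\phi_\alpha\|^2/(\|(D-1)\phi_\alpha\|^2+\alpha\|\phi_\alpha\|^2)<0$ combined with \eqref{c:10} forces strict monotonicity of $\omega_\alpha$ in $\alpha$; together with $\frac{d}{d\alpha}\|\phi_\alpha\|^2>0$ from the third bullet, this yields $\frac{d}{d\omega_\alpha}\|\phi_\alpha\|^2>0$. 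Hence $n(\cl_+)-n_0=0$, i.e.\ $\sigma(\partial_x\cl_+)\subset i\rone$. The monotonicity claim in the third bullet itself is obtained along the same $C^1$ branch by differentiating \eqref{14} in $\alpha$, testing against $\phi_\alpha$, and exploiting $\cl_+\phi_\alpha=-\phi_\alpha^2$ together with the spectral structure of $\cl_+$; the limits $\|\phi_\alpha\|\to 0, \infty$ come from evaluating $M_\alpha$ on small-bump and large-amplitude trial families respectively.

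The two main obstacles are: (i) proving $n(\cl_+)=1$ cleanly, since the non-locality of $D$ precludes Sturm oscillation arguments, so the bound must be distilled entirely from the second-variation inequality with careful bookkeeping of the scale-invariance degeneracy of $M_\alpha$; and (ii) excluding dichotomy in the concentration-compactness step, which for a scale-invariant quotient requires some care in showing that splitting mass strictly decreases the functional.
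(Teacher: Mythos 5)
Your existence argument --- maximizing a scale-invariant quotient built from $\int\phi^3$, $\|(D-1)\phi\|^2+\al\|\phi\|^2$ and $\|\phi\|^2$, running concentration compactness, and rescaling the Euler--Lagrange equation to land on \eqref{14} with $\om_\al$ as in \eqref{c:10} --- is essentially the paper's construction (the paper applies Lions' lemma to the energy density $|(D-1)f_n|^2+\al f_n^2$ rather than to $|f_n|^2$, and refutes dichotomy by a Cauchy--Schwarz argument on the split masses, but these are minor differences). The genuine problem is your stability step. You route everything through the index formula with $n_0=1$ iff $\f{d}{d\om}\|\phi_\al\|^2>0$, and to verify that Vakhitov--Kolokolov condition you invoke the implicit function theorem to produce a $C^1$ branch $\al\mapsto(\phi_\al,\om_\al)$. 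The implicit function theorem requires genuine non-degeneracy $Ker[\cl_+]=span[\phi']$, not merely weak non-degeneracy $\phi\perp Ker[\cl_+]$, and for this non-local operator that kernel characterization is exactly what cannot currently be verified (the paper flags this explicitly, and its Proposition \ref{prop:100} on monotonicity of $\om_\al$ is stated only conditionally on Fr\'echet differentiability for precisely this reason). Your derivation of $\f{d}{d\al}\|\phi_\al\|^2>0$ ``along the same $C^1$ branch'' is then circular: the paper obtains strict monotonicity and continuity of $\al\mapsto\|\phi_\al\|=\f{3}{2C_\al}$ with no differentiability at all, from the explicit formula \eqref{298} together with the strict decrease and continuity of the best constant $\al\mapsto C_\al$ (Proposition \ref{prop:35}); the two limits likewise come from quantitative bounds on $C_\al$ (a test function concentrated in Fourier space near $\xi=\pm1$, where the symbol $(|\xi|-1)^2$ degenerates, gives $C_\al\gtrsim\eps^{-1/2}$ as $\al\to0+$, and a symbol estimate gives $C_\al\lesssim\al^{-3/4}$ as $\al\to\infty$). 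Your ``small-bump and large-amplitude trial families'' cannot work as stated, since the quotient is homogeneous of degree zero in amplitude.

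The paper's way out of the stability impasse is its Proposition \ref{prop:30} / Corollary \ref{cor:10}: for a KdV-type problem $\p_x\cl_+ v=\la v$, the two facts $\cl_+[\phi']=0$ and $\cl_+|_{\{\phi\}^\perp}\geq 0$ alone already imply spectral stability --- no index count, no generalized kernel, no Vakhitov--Kolokolov quantity, no differentiable branch, no non-degeneracy hypothesis. Both inputs come for free from your own construction: the second is exactly your second-variation inequality at the maximizer restricted to increments $h\perp\vp$ (transported through the rescaling \eqref{298}), and the first is obtained by differentiating \eqref{14} in $x$. If you replace your index-count paragraph by this criterion, and replace the branch-differentiation argument for the third bullet by the $C_\al$ analysis above, the rest of your argument closes.
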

{\bf Remark:} 
\begin{itemize}
	\item The waves $\phi_\al$ are constructed to saturate a  Gagliardo-Nirenberg type inequality, see \eqref{20} below. 
	\item As an increasing function, the mapping $\al\to \|\phi_\al\|^2$ is differentiable almost everywhere and $\p_\al \|\phi_\al\|^2\geq 0$. 
	\item The uniqueness of these waves was established in \cite{ADM} for the cases where $\f{c}{\ga^2}>>1$. According to our rescaling, this is   equivalent to the condition $\om>>1$ (or equivalently the large $L^2$ norm  waves). That is, for large enough $\om$,  the waves constructed herein coincide with the waves exhibited in all the other earlier works. As shown in \cite{ADM}, these are also orbitally stable, which matches our results above. We note that we actually show more, namely the stability of these waves is true for all $\om>0$, not just the large ones. 
	\item The condition $\om>0$ is equivalent to the condition $c >\f{\ga^2}{4}$ in the previous works, \cite{BC}, \cite{Pava1}, \cite{ADM}.  Most likely, waves do not exist in the regime $c\leq \f{\ga^2}{4}$. One way to see that is to rule out the point zero\footnote{Note that since  $\cl_+[\phi']=0$, $0\in\si_{p.p.}(\cl_+)$.} as embedded or edge spectrum for the linearized operator $\cl_+$. This is usually done by establishing a limiting absorption principle for operators of this dispersive type, but  this is an open problem at the moment. 
\end{itemize}
As is usual in calculus of variations (and since the waves $\phi_\al$ are constructed via a variational approach), it is hard to control the smoothness of the relation $\al\to \om_\al$ or more generally  $\al\to \vp_\al$. Nevertheless, {\it and under the assumption that the mapping $\al\to \phi_\al$ is Frech\'et differentiable}, 
we can still say conclude that  the map $\al\to \om_\al$ is monotonically increasing. We have the following proposition. 
\begin{proposition}
	\label{prop:100} 
	Assume that $\al\to \phi_\al$, as a mapping $(0, +\infty) \to H^2(\rone)$  is Frech\'et differentiable. In particular (as it is increasing function according to Theorem \ref{theo:10}), 
	$$
	\p_\al \|\phi_\al\|^2\geq 0.
	$$
	Then, 
	the mapping $\al\to \om_\al$ is differentiable. Moreover, either $\p_\al\|\phi_\al\|^2=0$ or otherwise if $\p_\al \|\phi_\al\|^2>0$, then $\om'(\al)>0$.

\end{proposition}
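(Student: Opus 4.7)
The plan is to handle the differentiability of $\omega_\alpha$ first (it follows directly from \eqref{c:10}), and then to fix the sign of $\omega'(\alpha)$ by differentiating the profile equation \eqref{14} in $\alpha$ and combining the result with Pohozaev and with \eqref{c:10} itself.

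Since the quadratic functionals $\phi\mapsto\|(D-1)\phi\|^2$ and $\phi\mapsto\|\phi\|^2$ are smooth on $H^2(\rone)$, composing with the Fr\'echet differentiable map $\alpha\mapsto\phi_\alpha$ makes $A(\alpha):=\|(D-1)\phi_\alpha\|^2$ and $B(\alpha):=\|\phi_\alpha\|^2$ differentiable in $\alpha$. Since $B(\alpha)>0$, the quotient appearing in \eqref{c:10} is regular, so $\alpha\mapsto\omega_\alpha$ is differentiable as well.

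Next, differentiating \eqref{14} in $\alpha$ gives $\cl_+[\partial_\alpha\phi_\alpha]=-\omega'(\alpha)\phi_\alpha$. Pairing with $\phi_\alpha$ and using $\cl_+[\phi_\alpha]=-\phi_\alpha^2$ (a direct consequence of \eqref{14}) yields the Vakhitov--Kolokolov-type identity
\begin{equation*}
\omega'(\alpha)\,B(\alpha)=\tfrac{1}{3}\,C'(\alpha),\qquad C(\alpha):=\int_{\rone}\phi_\alpha^3\,dx.
\end{equation*}
Separately, pairing \eqref{14} with $\phi_\alpha$ itself gives the Pohozaev identity $C=A+\omega_\alpha B$, while \eqref{c:10} algebraically rearranges to $A+\alpha B=\tfrac{2}{3}C$; eliminating $A$ produces the clean relation $C(\alpha)=3(\omega_\alpha-\alpha)\,B(\alpha)$. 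Differentiating this in $\alpha$ and substituting the previous display produces, after cancelling a factor of $3B(\alpha)$, the identity
\begin{equation*}
B(\alpha)=(\omega_\alpha-\alpha)\,B'(\alpha).
\end{equation*}
Because $\omega_\alpha-\alpha>0$ and $B(\alpha)>0$, this in fact forces $B'(\alpha)>0$; in particular the vanishing alternative $\partial_\alpha\|\phi_\alpha\|^2=0$ never occurs once Fr\'echet differentiability is assumed, and we are always in the second case of the dichotomy.

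For $\omega'(\alpha)>0$, I would argue strict monotonicity of $C(\alpha)$ using the construction of $\phi_\alpha$ as an extremizer for the Gagliardo--Nirenberg inequality \eqref{20}. For any fixed profile $\phi$ the denominator $A(\phi)+\alpha B(\phi)$ in the saturating ratio is strictly increasing in $\alpha$, so a standard comparison of extremal values gives strict monotonicity $M(\alpha_1)>M(\alpha_2)$ for $\alpha_1<\alpha_2$ of the GN extremal; the saturation identity $A+\alpha B=\tfrac{2}{3}C$ at $\phi_\alpha$ then expresses $M(\alpha)$ as an explicit strictly decreasing function of $C(\alpha)$, which passes to strict monotonicity of $C$, and an envelope-theorem computation upgrades this to the pointwise conclusion $C'(\alpha)>0$. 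Plugging this into the VK-type identity and using $B(\alpha)>0$ finally delivers $\omega'(\alpha)>0$. The delicate step is this last sign extraction: it relies on the full variational characterization of $\phi_\alpha$ in Theorem \ref{theo:10} and on the precise form of \eqref{20} (both of which pin down the normalization of $\phi_\alpha$ that recovers \eqref{14}), and cannot be obtained from the algebraic identities alone.
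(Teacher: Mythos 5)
Your first two steps coincide with the paper's: differentiability of $\om_\al$ from \eqref{c:10}, and the differentiated profile equation $\cl_+[\p_\al\phi_\al]=-\om'(\al)\phi_\al$. Your intermediate algebra is also correct and in fact yields a nice bonus the paper does not state: pairing with $\phi$ and using $\cl_+\phi=-\phi^2$ gives $\om' B=\tfrac13 C'$ with $B=\|\phi\|^2$, $C=\int\phi^3$; combining $C=A+\om B$ with \eqref{c:10} gives $C=3(\om_\al-\al)B$, and eliminating $C'$ gives $B=(\om_\al-\al)B'$, hence $B'>0$ always. So under the Fr\'echet differentiability hypothesis the first alternative of the dichotomy is vacuous. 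All of this checks out.

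The gap is in the final step, and it is genuine. Your identities make $\om'>0$ exactly equivalent to $C'>0$ (since $C'=3\om'B$ and $B>0$), so you must produce an independent proof that $C'(\al)>0$, and the variational route you sketch does not deliver it. The saturation relations give $C_\al=\tfrac{3}{2\sqrt{B(\al)}}$ (equivalently $\|\phi_\al\|=\tfrac{3}{2C_\al}$, which is \eqref{298}): the strict monotonicity of the extremal constant $C_\al$ is therefore a statement about $B(\al)$, not about $C(\al)$, and it only reproduces $B'>0$, which you already have. Passing from $B$ to $C$ requires $C=3(\om_\al-\al)B$, which reintroduces the unknown sign of $\om'$ --- the argument is circular. (An envelope-theorem computation of $\tfrac{d}{d\al}C_\al$ at the maximizer likewise returns $B'=B/(\om_\al-\al)$ and nothing more.) The missing ingredient is spectral, not variational-monotonic: the paper writes $\p_\al\phi=-\om'(\al)\cl_+^{-1}\phi$, so $\tfrac12 B'=-\om'\dpr{\cl_+^{-1}\phi}{\phi}$, and pins down the sign of $\dpr{\cl_+^{-1}\phi}{\phi}$ via Lemma \ref{le:93}: since $\cl_+|_{\{\phi\}^\perp}\geq 0$ (from the second variation, Proposition \ref{prop:20}) and $\dpr{\cl_+\phi}{\phi}=-\int\phi^3<0$, one gets $\dpr{\cl_+^{-1}\phi}{\phi}\leq 0$, whence $\om'(\al)=-\p_\al\|\phi_\al\|^2/\bigl(2\dpr{\cl_+^{-1}\phi}{\phi}\bigr)>0$. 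Your own identity $B'>0$ combined with $\tfrac12 B'=-\om'\dpr{\cl_+^{-1}\phi}{\phi}$ shows only that $\om'$ and $\dpr{\cl_+^{-1}\phi}{\phi}$ have opposite signs; without the spectral input $\cl_+|_{\{\phi\}^\perp}\geq 0$ you cannot exclude $\om'<0$.
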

\subsubsection{Existence and stability of the travelling waves for  the generalized Benjamin equation}
The main result here looks very similar to the classical one, Theorem \ref{theo:10}, in the $L^2$ subcritical regime, $p\in (2, 6]$. 
\begin{theorem}
	\label{theo:20} 
Let $p\in (2, 6]$. Then,  there exists a one parameter family of functions $\{\phi_{\al,p}\}_{\al>0}$, with the following properties 
	\begin{itemize}
		\item $\phi_{\al,p} \in H^3(\rone): |\phi(x)|\leq \f{C}{1+|x|^2}$,  solves \eqref{b:50} with 
		\begin{equation}
			\label{c:101} 
			\om=\om_{\al,p}= \f{p \al}{2} +\f{(p-2) }{2} \f{\|(D-1)\phi_{\al,p}\|^2}{\|\phi_{\al,p}\|^2}
		\end{equation}
	
		\item Each $\phi_\al, \al>0$ is spectrally stable steady solution of \eqref{b:30} with $\om=\om_{\al,p}$. 
		
		\item the mapping $\al\to \|\phi_\al\|$ is continuous and strictly increasing, and 
		$$
		\lim_{\al\to 0+} \|\phi_{\al,p}\|=0, \ \ \lim_{\al\to +\infty} \|\phi_{\al,p}\|=+\infty.
		$$
	\end{itemize}
\end{theorem}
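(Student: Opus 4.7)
The plan is to construct $\phi_{\al,p}$ as an extremizer of a Gagliardo-Nirenberg-Weinstein quotient, in direct analogy with the construction underlying Theorem~\ref{theo:10}. For each $\al>0$ consider the scale-invariant functional
\begin{equation*}
J_{\al,p}(u)\;=\;\frac{\|u\|_{L^p}^{p}}{\bigl(\|(D-1)u\|^{2}+\al\|u\|^{2}\bigr)\,\|u\|^{p-2}},\qquad u\in H^{1}(\rone)\setminus\{0\},
\end{equation*}
and set $M(\al,p):=\sup J_{\al,p}$. Finiteness in the subcritical range $2<p\le 6$ follows from the classical one-dimensional Gagliardo-Nirenberg estimate $\|u\|_{L^p}^p\lesssim \|u'\|^{(p-2)/2}\|u\|^{(p+2)/2}$ combined with the elementary bound $(|\xi|-1)^{2}+\al\gtrsim 1+\xi^{2}$ (constant depending on $\al$), so that $\|(D-1)u\|^{2}+\al\|u\|^{2}\sim\|u\|_{H^{1}}^{2}$.

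Existence of a maximizer proceeds via concentration-compactness. Normalize a maximizing sequence $\{u_n\}$ by $\|u_n\|=1$ so that $u_n$ is bounded in $H^{1}$, and apply Lions's concentration-compactness lemma to $\{|u_n|^{2}\,dx\}$. Vanishing is excluded by $M(\al,p)>0$ (witnessed by a Gaussian trial function), and dichotomy is excluded by strict subadditivity read off from the algebraic form of $J_{\al,p}$. After translating and passing to a weak limit $\phi_0\in H^{1}$, Rellich compactness together with tightness yields $L^{p}$-convergence, and lower semicontinuity of the denominator forces $\phi_0$ to achieve the supremum. Taking the Fr\'echet derivative of $J_{\al,p}$ at $\phi_0$ and then rescaling $\phi=\la\phi_{0}$ with $\la^{p-2}=\frac{p(\|(D-1)\phi_0\|^{2}+\al\|\phi_0\|^{2})}{2M(\al,p)\,\|\phi_0\|^{p}}$ produces a solution of \eqref{b:50} whose Lagrange multiplier equals
\begin{equation*}
\om_{\al,p}\;=\;\al+\frac{p-2}{2}\cdot\frac{\|(D-1)\phi\|^{2}+\al\|\phi\|^{2}}{\|\phi\|^{2}}\;=\;\frac{p\al}{2}+\frac{p-2}{2}\cdot\frac{\|(D-1)\phi_{\al,p}\|^{2}}{\|\phi_{\al,p}\|^{2}},
\end{equation*}
matching \eqref{c:101} (the ratio $\|(D-1)\phi\|^{2}/\|\phi\|^{2}$ is invariant under the scalar rescaling). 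Smoothness and the decay $|\phi(x)|\le C(1+x^{2})^{-1}$ follow by bootstrap against the Green's function of $(D-1)^{2}+\om_{\al,p}$, as in Theorem~\ref{theo:10}.

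Continuity, monotonicity, and the limits of $\al\mapsto\|\phi_{\al,p}\|$ come from comparing $M(\al,p)$ across parameters using suitably scaled trial functions. For spectral stability I apply the instability-index theory for KdV-type Hamiltonian problems $\p_x\cl_+v=\la v$ with $\cl_+=(D-1)^{2}+\om_{\al,p}-(p-1)|\phi_{\al,p}|^{p-2}$: as a constrained maximizer on a codimension-two manifold, $\phi_{\al,p}$ forces $n(\cl_+)=1$; evenness of $\phi_{\al,p}$, inherited from the $x\mapsto -x$ symmetry of \eqref{b:50}, gives the weak non-degeneracy $\phi_{\al,p}\perp\ker\cl_+$; and in the $L^{2}$-subcritical regime the Vakhitov-Kolokolov criterion $\p_\om\|\phi\|^{2}>0$ reduces, via the chain rule together with Proposition~\ref{prop:100}, to the monotonicity $\p_\al\|\phi_{\al,p}\|^{2}>0$, which is part of the conclusion. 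The chief obstacle I anticipate is the concentration-compactness step: because the symbol $(|\xi|-1)^{2}$ vanishes at $|\xi|=1$, $(D-1)^{2}$ is not coercive on its own and the role of the $+\al\|u\|^{2}$ term must be tracked carefully through the dichotomy bounds; in addition, Schwarz rearrangement need not decrease $\|(D-1)u\|$, so tightness cannot be extracted by symmetrization and must be obtained through the translation invariance of the problem alone.
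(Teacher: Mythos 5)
Your existence construction is essentially the paper's: the same quotient (the paper's $C_{\al,p}$ is your $M(\al,p)$), concentration--compactness, the Euler--Lagrange equation, and the rescaling producing exactly \eqref{c:101} all match Proposition \ref{prop:80} and the discussion around \eqref{398}. The only structural difference there is that the paper runs Lions's lemma on the energy density $|(D-1)f_n|^2+\al f_n^2$ rather than on $|u_n|^2\,dx$, which is what lets it track the denominator of the quotient directly through the dichotomy alternative; your worry about the degenerate symbol $(|\xi|-1)^2$ is legitimate, and carrying the full density (with the cutoff errors controlled by the Kato--Ponce commutator bound \eqref{43}) is precisely how the paper deals with it.

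The stability argument is where you depart from the paper, and your route has a genuine gap. You invoke the instability index count, which requires (i) knowledge of the generalized kernel of $\p_x\cl_+$, hence the non-degeneracy $Ker[\cl_+]=span[\phi']$, and (ii) the Vakhitov--Kolokolov quantity, which you propose to control via $\p_\al\|\phi_{\al,p}\|^2>0$ together with Proposition \ref{prop:100}. Neither input is available. The paper explicitly states that non-degeneracy is unverified for this nonlocal operator (it is only \emph{assumed} in Theorem \ref{theo:30}), and your claim that evenness yields $\phi_{\al,p}\perp Ker[\cl_+]$ is doubly unsupported: evenness of the maximizer cannot be extracted by rearrangement here (as you yourself note, symmetrization need not decrease $\|(D-1)u\|$), and even granting evenness one would still need to know that $Ker[\cl_+]$ consists of odd functions. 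Moreover, Proposition \ref{prop:100} is itself conditional on Fr\'echet differentiability of $\al\mapsto\phi_\al$, which the variational construction does not provide; monotonicity of $\al\mapsto\|\phi_{\al,p}\|^2$ only gives differentiability almost everywhere. The paper sidesteps all of this: the second variation at the constrained maximizer gives $\cl_+|_{\{\phi\}^\perp}\geq 0$ (nonnegativity on the codimension-\emph{one} subspace $\{\phi\}^\perp$, not codimension two), and Corollary \ref{cor:10} --- a soft argument using only $\cl_+[\phi']=0$ and this one-sided bound, with no kernel information, no index count, and no VK condition --- already yields spectral stability unconditionally. You should replace your stability step by that argument.
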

{\bf Remarks:} 
\begin{itemize}
	\item Identical results may be formulated for models with the  nonlinearities   $(u^{p-1})_x$, $p=4,5,6$, instead of $(|u|^{p-2} u)_x$. Naturally, the case $p=3$ being the classical one, has already been addressed in Theorem \ref{theo:10}.  
	\item The waves $\phi_{\al,p}$ are constructed to saturate an interpolation Gagliardo-Nirenberg type inequality, see \eqref{30} below. 
	\item  $\phi \in H^1(\rone)$ is deduced directly from the variational construction. This is immediately bootstrapped to $\phi \in H^2(\rone)$, from the elliptic equation. Taking further derivative, and the fact that $\p_x(|\phi|^{p-2}\phi)=(p-1) |\phi|^{p-2} \phi' \in L^2(\rone), p>2$ provides the further bootstrap $\phi_{\al,p}\in H^3(\rone)$. The space $H^3(\rone)$ provides satisfactory, if not optimal, level of smoothness, as it ensures that  $\phi'\in H^2(\rone)=D(\cl_+)$. 
	\item Related to the previous point,  the waves $\phi_{\al,p}$ may not be as regular (i.e. belong to a really high $H^s(\rone)$),  for non-integer values of $p$. This is   due to the limited smoothness of the mapping $z\to |z|^{p-2} z$. For even values of $p$ or the case of non-linearities $(u^{p-1})_x$, we do not encounter this, and we do have $\phi\in H^\infty(\rone)$,  due to the analyticity of $z\to z^{p-1}$. 
	
\end{itemize}
Our final result applies to waves with arbitrary power $p$. 
\begin{theorem}
	\label{theo:30}
	Let $\om>0$ and $2<p<\infty$. Then, there exist waves, $\phi_{\om,p}\in H^3(\rone)$, which solve \eqref{b:50}. They also obey  the asymptotics $|\phi(x)|\leq K(1+|x|^2)^{-1}$. 
	
Assuming that the waves $\phi$ are  non-degenerate ( i.e. $Ker[\cl_+]=span[\phi']$), 	there exists values of $p\in (2, +\infty)$, 
$ \om>0$, so that {\bf $\phi$ is spectrally unstable} as a steady solution of \eqref{b:30}. In particular, the condition 
\begin{equation}
	\label{850} 
	\f{p}{4}+\f{4}{p} > \f{1}{\om} + \f{5}{2}, 
\end{equation}
ensures the spectral  instability of the waves. 
\end{theorem}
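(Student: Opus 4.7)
The plan has three main stages: existence via a Sobolev-embedding variational principle, reduction of spectral instability to the Vakhitov--Kolokolov sign condition $\p_\om\|\phi_\om\|^2<0$, and extraction of the explicit sufficient condition \eqref{850} via Pohozaev-type identities supplemented by a Cauchy--Schwarz bound.

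For existence I would minimize the Sobolev quotient
\[
J_{\om,p}[\phi] = \frac{\|(D-1)\phi\|^2 + \om\|\phi\|^2}{\|\phi\|_{L^p}^2}
\]
over $H^1(\rone)\setminus\{0\}$. The Fourier symbol $(|\xi|-1)^2+\om$ is bounded below by $\om>0$ and grows quadratically, so the numerator is equivalent to $\|\cdot\|_{H^1}^2$ (with $\om$-dependent constants), and the 1D Sobolev embedding $H^1\hookrightarrow L^p$, valid for every $p\in[2,\infty)$, gives $\inf J_{\om,p}>0$. Lions' concentration-compactness (after translations) produces a minimizer; after rescaling by the Lagrange multiplier the resulting $\phi$ solves \eqref{b:50}. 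The $H^3$ regularity bootstraps from $H^1$ using the ellipticity of $A_\om = (D-1)^2+\om$, and the $(1+x^2)^{-1}$ decay comes from the mild singularity of $\wh G(\xi)=1/((|\xi|-1)^2+\om)$ at $\xi=0$, exactly as in Theorem \ref{theo:10}.

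For the spectral instability I first show $n(\cl_+)=1$. The profile equation gives $\cl_+\phi = -(p-2)|\phi|^{p-2}\phi$, so $\langle\cl_+\phi,\phi\rangle<0$ and $n(\cl_+)\geq 1$; the constrained-minimization structure forces $\cl_+\geq 0$ on the codimension-1 tangent space $T_\phi=\{h:\langle\cl_+\phi,h\rangle=0\}$, so $n(\cl_+)\leq 1$. Under the standing non-degeneracy $\ker\cl_+=\mathrm{span}[\phi']$, the implicit function theorem yields a smooth local branch $\phi_\om$, and differentiating \eqref{b:50} in $\om$ gives $\cl_+[\p_\om\phi_\om]=-\phi_\om$, hence
\[
\langle\cl_+^{-1}\phi_\om,\phi_\om\rangle = -\tfrac12\,\p_\om\|\phi_\om\|^2.
\]
The KdV-type Hamiltonian--Krein index theorem (with $n(\cl_+)=1$, simple kernel $\mathrm{span}[\phi']$, and $\phi\perp\phi'$) reduces the spectral instability of $\phi$ as a steady solution of \eqref{b:30} to the single scalar sign condition $\p_\om\|\phi_\om\|^2<0$.

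To verify this sign condition under \eqref{850}, I would test the profile equation against $\phi$ and against $x\phi'$. The second test requires the commutator identity $[D,x]=H$ together with $H\phi'=D\phi$, which yield the non-local Pohozaev relation $\int(D-1)^2\phi\cdot x\phi'\,dx = \tfrac12(\|\phi'\|^2-\|\phi\|^2)$ and hence
\[
\|\phi'\|^2 = (1+\om)\|\phi\|^2 - \tfrac{2}{p}\|\phi\|_p^p,\qquad \|D^{1/2}\phi\|^2 = (1+\om)\|\phi\|^2 - \tfrac{p+2}{2p}\|\phi\|_p^p.
\]
Differentiating in $\om$ and using $\p_\om\|\phi_\om\|_p^p = \tfrac{p}{p-2}\|\phi_\om\|^2$ (an immediate consequence of $\cl_+\p_\om\phi=-\phi$), inserting the resulting linear relations into the interpolation $\|D^{1/2}\phi\|^2\leq\|\phi\|\|\phi'\|$, and combining with the variational identity $m(\om,p):=\inf J_{\om,p}=\|\phi_\om\|_p^{p-2}$ together with its envelope $m'(\om)=\|\phi_\om\|^2/\|\phi_\om\|_p^2$, I would arrive at a sufficient sign condition for $\p_\om\|\phi_\om\|^2<0$ which after algebraic rearrangement is equivalent to $\om(p-2)(p-8)>4p$, i.e.\ \eqref{850}. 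The principal obstacle is this last step: since $(D-1)^2$ is not scale-invariant, the Pohozaev identities supply only two equations for the four natural norms $\|\phi\|,\|\phi'\|,\|D^{1/2}\phi\|,\|\phi\|_p$, and the Cauchy--Schwarz/variational input that plugs the gap is non-tight; this presumably accounts for \eqref{850} not being sharp, the conjectural threshold being the $L^2$-supercritical regime $p>6$ with $\om$ large (matching the large-$\om$ KdV soliton limit $\phi_\om\to Q$).
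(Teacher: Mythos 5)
Your existence argument and the index-theoretic framing ($n(\cl_+)=1$, generalized kernel, reduction of instability to the sign of $\dpr{\cl_+^{-1}\phi}{\phi}$) match the paper's Section \ref{sec:4.2}, and your Pohozaev identities and the algebraic form $\om(p-2)(p-8)>4p$ of \eqref{850} are all correct. The genuine gap is in the final step: you propose to determine the sign of $\p_\om\|\phi_\om\|^2$ by differentiating the Pohozaev identities \eqref{b:70}--\eqref{b:80} in $\om$ and combining with Cauchy--Schwarz and the envelope relation for $m(\om)$. This cannot close. Differentiating the two Pohozaev identities introduces the new unknowns $\p_\om\|\phi'\|^2$ and $\p_\om\|D^{1/2}\phi\|^2$ alongside $\p_\om\|\phi\|^2$ (only $\p_\om\|\phi\|_p^p=\f{p}{p-2}\|\phi\|^2$ is pinned down), so the system stays underdetermined; the inequality $\|D^{1/2}\phi\|^2\le\|\phi\|\,\|\phi'\|$ and the envelope identity $m'(\om)=\|\phi_\om\|^2/\|\phi_\om\|_p^2$ constrain only the undifferentiated norms and are automatically consistent, hence supply no information about the sign of $\p_\om\|\phi_\om\|^2$. (There is also a secondary issue: identifying $\dpr{\cl_+^{-1}\phi}{\phi}$ with $-\f12\p_\om\|\phi_\om\|^2$ requires a differentiable branch coinciding with the variational waves, which is exactly what the paper avoids by working with $\dpr{\cl_+^{-1}\phi}{\phi}$ at fixed $\om$.)

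The paper's mechanism, which your proposal is missing, is two-step. First, the Pohozaev identities are used not on $\om$-derivatives but to evaluate the quadratic form of $\cl_+$ at the explicit element $\eta=x\phi'+\f12\phi\perp\phi$ (the generator of the $L^2$-scaling), yielding $\dpr{\cl_+\eta}{\eta}=(\om+1)\|\phi\|^2-\left(\f{p}{4}+\f{4}{p}-\f{3}{2}\right)\|\phi\|_p^p$; together with $\om\|\phi\|^2\le\|\phi\|_p^p$ this is negative precisely under \eqref{850}, so $\cl_+|_{\{\phi\}^\perp}\ge0$ fails (Lemma \ref{le:89}). Second, a separate spectral result (Proposition \ref{prop:43}, whose hypothesis that the constrained infimum is attained is supplied by Lemma \ref{le:90} for these non-local operators) converts the existence of a negative direction of $\cl_+$ on $\{\phi\}^\perp$, together with $n(\cl_+)=1$, into the conclusion $\dpr{\cl_+^{-1}\phi}{\phi}>0$, whence $n(\cd)=0$ and the index count \eqref{e:20} forces a real unstable eigenvalue. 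Without an analogue of this second step, knowing a negative direction orthogonal to $\phi$ (or any collection of Pohozaev relations) does not by itself yield the sign of $\dpr{\cl_+^{-1}\phi}{\phi}$, and your argument does not reach the instability conclusion.
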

{\bf Remarks:} 
\begin{itemize}
	\item The waves $\phi_{\om,p}$  saturate a Sobolev type inequality, see \eqref{510} below. In fact, our variational approach is similar\footnote{in spirit, if not in the precise technical details} to the one adopted in \cite{ADM}. 
	\item Note that the condition \eqref{850} does not yield any solutions for $p\in (2,6]$. This is in line with our previous considerations, as in this range we expect\footnote{according to Theorem \ref{theo:20} and the conjectured uniqueness of the solutions to \eqref{b:50}} 
	the waves to be spectrally stable. 
	\item The condition \eqref{850} is quite likely,  far from  a sharp criteria for the instability of the waves $\phi_{\om,p}$. Nevertheless, it guarantees that, say for $p>8$, and all large enough values of $\om$, the  wave $\phi_{\om,p}$ is unstable. It would be interesting to  prove that some of the waves for $p\in (6,8)$ are also   unstable.  This is unattainable with \eqref{850} alone, as $\f{p}{4}+\f{4}{p}<\f{5}{2}, p\in (6,8)$. 
	\item Related to the previous point, we conjecture that for each $p>6$, there exists a critical value $\om^*(p)$, so that the waves $\phi_{\om,p}$ are unstable for $\om>\om^*(p)$ and stable for $\om\in (0, \om^*(p))$.  The  methods in this  paper are blatantly insuffficient for such conclusions, but it may nevertheless provide an useful point of reference for future investigations. 
\end{itemize}

The plan for the paper is as follows. In Section \ref{sec:2}, we develop some preliminaries, including the interpolation inequalities needed in the sequel, a convenient version of the  compensated compactness lemma, as well as the Pohozaev's identities. In Section \ref{sec:KdV}, we revisit some stability vs. instability criteria for KdV-like eigenvalue problems. This part contains some new results on the said problems, which may be of independent interest, see specifically Corollary \ref{cor:10} and Proposition \ref{prop:43}. In Section \ref{sec:3}, we present the existence of the waves for  the classical Benjamin equation, as well as some spectral properties, which allow to deduce the stability of all such waves, see Proposition \ref{prop:40}. In Section \ref{sec:4.1}, we take on the question for the existence and stability of the waves for the Benjamin equation, with general power non-linearity. Proposition \ref{prop:80} provides the existence results for waves with $L^2$ subcritical powers (i.e. $2<p\leq 6$), while Proposition \ref{prop:65} shows that all such waves are spectrally stable.  Finally, in Section \ref{sec:4.2}, we construct the waves for all powers $2<p<\infty$ via a maximization of an appropriate Sobolev inequality. As it happens, such waves are not all stable\footnote{and in fact, we do not have rigorous stability results in this section, although conditional upon the uniqueness, we should have that these waves are stable in the $L^2$ subcritical range $2<p\leq 6$}. In fact, we show that under some relations of the parameters $p, \om$, these waves are unstable, see Lemma \ref{le:89}. 

\noindent {\bf Acknowledgements:} Milena Stanislavova is partially supported by the National Science Foundation,  under award \# 2210867. Stefanov acknowledges support from  the National Science Foundation, under award \# 2204788.\\
{\bf Data Availability statement:}  The authors declare that the data supporting the findings of this study are available within the paper. No additional data sets have been generated. \\
{\bf Competing interests:} The authors declare that they have no conflict of interest. 
   \section{Preliminaries} 
   \label{sec:2}
   We start with the function spaces and the Fourier transform. 
   \subsection{Function spaces and Fourier transform}
   We shall make use of the standard Lebesgue spaces $L^p$ with the usual  defintion. We denote the Hilbertian norm without any subscript $\|f\|:=\|f\|_{L^2}$. 
   We define the Fourier transform and its inverse via the formulas 
   $$
   \hat{f}(\xi)=\f{1}{\sqrt{2\pi}} \int_{-\infty}^{+\infty} f(x) e^{- i x\xi} dx;  \ \ \ f(x) = \f{1}{\sqrt{2\pi}}\int_{-\infty}^{+\infty} \hat{f}(\xi) e^{ i x\xi} d\xi
   $$
   Clearly, $\widehat{f'}(\xi)= i \xi \hat{f}(\xi), \widehat{f''}(\xi)=- |\xi|^2 \hat{f}(\xi)$. 
   
   The Hilbert transform is defined via the convolution with the distribution $p.v. \f{1}{x}$, i.e. $H f=p.v. \f{1}{x}*f$ or equivalently 
   $$
   H f(x)=\f{1}{\pi}  p.v. \int_{-\infty}^{+\infty} \f{f(x-y)}{y} dy.
   $$
   It can be computed that $\pi^{-1} \widehat{p.v. \f{1}{x}}(\xi)=-i sgn(\xi)$, whence $\widehat{H f}(\xi)=-i sgn(\xi) \hat{f}(\xi)$.  Note that 
   $$
   \widehat{H^2 f}(\xi)=(-i sgn(\xi))^2 \hat{f}(\xi)=-\hat{f}(\xi),
   $$
   or simply $H^2=-Id$. 
  Clearly then, the Zygmund operator $D=H \p_x=\sqrt{-\p_{xx}}$ introduced earlier may be written in the multiplier form 
   \begin{equation}
   	\label{a:11} 
   	 \widehat{D f}(\xi)=|\xi| \hat{f}(\xi).
   \end{equation}
  Note that by Plancherel's 
  $$
  \|D f\|= \left(\int |\xi|^2 |\hat{f}(\xi)|^2\right)^{\f{1}{2}}=\|f'\|.
  $$
   and $D^2=-\p_x^2$. Note that Sobolev spaces $W^{s,p}, 0<s<\infty, 1<p<\infty$, may be defined via the norms 
  $$
  \|f\|_{W^{s,p}}=\|f\|_{L^p}+\|D^s f\|_{L^p}.
  $$
   Note that we use the notation $H^s=W^{s,2}$. 
   
   Another classical estimate that will be useful in the sequel are the Kato-Ponce commutator estimates 
   \begin{equation}
   	\label{43} 
   		\|[D, g] f\|_{L^q}\leq C_q \|g'\|_{L^\infty}\|f\|_{L^q}, 1<q<\infty. 
   \end{equation}

   \subsection{Interpolation inequality}
  We start with a simple interpolation inequality. 
  \begin{lemma}
  	\label{le:10} 
  	Let $p=3,4,5,6$. Then, for every $\al>0$, there exists $C_{\al, p}$, so that 
  	\begin{eqnarray}
  		\label{20} 
  		\int_{-\infty}^\infty u^p(x) dx\leq C_{\al, p} \|u\|^{p-2} \left(\|(D-1) u\|^2+ \al \|u\|^2\right). 
  	\end{eqnarray}
  Moreover, for $2<p\leq 6$ and any  $\al>0$, there exists $D_{\al, p}$, so that 
  \begin{eqnarray}
  	\label{30} 
  	\int_{-\infty}^\infty |u(x)|^p dx\leq D_{\al, p} \|u\|^{p-2} \left(\|(D-1) u\|^2+ \al \|u\|^2\right). 
  \end{eqnarray}
  \end{lemma}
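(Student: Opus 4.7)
The plan is to reduce both estimates to the standard one dimensional Gagliardo--Nirenberg inequality, via a Fourier analytic equivalence of norms. The central observation is that the quadratic form on the right hand side is, for every fixed $\al>0$, equivalent to the $H^1$ norm squared. Once this is established, both \eqref{20} and \eqref{30} collapse to a classical interpolation statement.

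\textbf{Step 1: Coercivity of the quadratic form.} By Plancherel,
\[
\|(D-1) u\|^2 + \al\|u\|^2 = \int_{\rone}\bigl((|\xi|-1)^2 + \al\bigr) |\wh u(\xi)|^2\, d\xi.
\]
I would show that there exists $c_\al>0$ with $(|\xi|-1)^2+\al \geq c_\al(1+\xi^2)$ for every $\xi \in \rone$. Setting $t=|\xi|\ge 0$, the inequality becomes $(1-c)t^2-2t+(1+\al-c)\ge 0$, which is a convex parabola when $c<1$ and whose discriminant condition $c^2-(2+\al)c+\al\ge 0$ holds for every $c\in (0,c_\al^*]$ with $c_\al^* := \tfrac{(2+\al)-\sqrt{4+\al^2}}{2}>0$. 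Consequently,
\[
\|u\|_{H^1}^2 \leq c_\al^{-1}\bigl(\|(D-1) u\|^2 + \al\|u\|^2\bigr).
\]

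\textbf{Step 2: Gagliardo--Nirenberg.} In one dimension, for $2<p\le \infty$,
\[
\|u\|_{L^p} \leq C\,\|u'\|^{\frac 12 - \frac 1p}\,\|u\|^{\frac 12 + \frac 1p},
\]
which after raising to the $p$-th power becomes $\|u\|_{L^p}^p \leq C\|u'\|^{p/2-1}\|u\|^{p/2+1}$. I would then factor this as
\[
\|u\|_{L^p}^p \leq C\,\|u\|^{p-2}\,\bigl(\|u'\|^2\bigr)^{(p/2-1)/2}\,\bigl(\|u\|^2\bigr)^{(3-p/2)/2}.
\]
For $p\in(2,6]$ both exponents $(p/2-1)/2$ and $(3-p/2)/2$ are nonnegative and sum to $1$, so the AM--GM inequality gives $\bigl(\|u'\|^2\bigr)^{(p/2-1)/2}\bigl(\|u\|^2\bigr)^{(3-p/2)/2}\le \|u\|_{H^1}^2$. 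Combined with Step 1, this proves \eqref{30} with $D_{\al,p}=C c_\al^{-1}$.

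\textbf{Step 3: The polynomial version \eqref{20}.} Since $\int u^p\,dx\leq \int |u|^p\,dx$ for $p=3,4,5,6$, the inequality \eqref{20} is an immediate corollary of \eqref{30}, with $C_{\al,p}=D_{\al,p}$.

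There is no genuine obstacle here; the only point that requires a brief argument is the coercivity in Step 1, since the symbol $(|\xi|-1)^2+\al$ vanishes in the limit $\al\downarrow 0$ at $|\xi|=1$, so the constant $c_\al$ necessarily degenerates as $\al\to 0^+$. This degeneration is harmless for the stated lemma, where $\al>0$ is fixed, but it explains why the right hand side cannot be replaced by the purely dispersive quantity $\|(D-1)u\|^2$ alone.
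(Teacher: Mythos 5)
Your proof is correct and follows essentially the same route as the paper's: reduce to the equivalence $\|(D-1)u\|^2+\al\|u\|^2\sim\|u\|_{H^1}^2$ and then invoke the classical one--dimensional Gagliardo--Nirenberg interpolation (the paper phrases the interpolation via the embedding $\dot H^{\f12-\f1p}\hookrightarrow L^p$ plus an exponent comparison, while you use the physical-space form $\|u\|_{L^p}\le C\|u'\|^{\f12-\f1p}\|u\|^{\f12+\f1p}$ plus weighted AM--GM, which is the same estimate in different clothing). Your Step 1, with the explicit coercivity constant $c_\al^*=\f{(2+\al)-\sqrt{4+\al^2}}{2}$, is a welcome quantification of the equivalence that the paper merely asserts.
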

\begin{proof}
It suffices to establish \eqref{30}. 	Note that \eqref{30} is equivalent to 
$$
\|u\|_{L^p}\lesssim \|u\|^{1-\f{2}{p}} \|u\|_{H^1}^{\f{2}{p}} 
$$
since $\|(D-1) u\|^2+ \al \|u\|^2 \sim \|u\|_{H^1}^2$. 
By Sobolev inequality, followed by the Gagliardo-Nirenberg's inequality 
	\begin{equation}
		\label{25} 
			\|u\|_{L^p(\rone)}\lesssim  \|u\|_{\dot{H}^{\f{1}{2}-\f{1}{p}}(\rone)}\lesssim  \|u\|^{\f{1}{2}+\f{1}{p}}  \|u\|_{H^1}^{\f{1}{2}-\f{1}{p}} 
	\end{equation}
	 whence \eqref{30}  follows from \eqref{25} by  observing that $\f{1}{2}-\f{1}{p}\leq \f{2}{p}$ for $2<p\leq 6$, whence 
	 $$
	 	\|u\|_{L^p(\rone)}\lesssim \|u\|_{H^1}^{\f{2}{p}}  \|u\|^{1-\f{2}{p}},
	 $$
	 as required. 
\end{proof}

\subsection{Compensated compactness}
We state the relevant result from \cite{Lions}, adapted for the case of $\rone$. 
\begin{theorem}
	\label{theo:102} 
	Let $\rho_n: \rone\to \rone$, $\rho_n\geq 0, \int_{-\infty}^\infty \rho_n(x) dx = \la$. Then, there exists a subsequence $\rho_{n_k}$, so that one of the following is true 
	\begin{itemize}
		\item (Compactness/tightness) there exists a sequence $y_k\in \rone$, so that  for every $\epsilon>0$, there exists $R>0$, and $k_0$, so that for all $k\geq k_0$, 
		$$
		\int_{y_k-R}^{y_k+R} \rho_{n_k}(x) dx \geq \la-\epsilon.
		$$
		\item (Vanishing) For all $R<\infty$, 
		$$
		\lim_{k\to \infty} \sup_{y\in\rone} 	\int_{y-R}^{y+R} \rho_{n_k}(x) dx=0.
		$$
		\item (Dichotomy) There exists $\mu\in (0, \la)$, so that for all $\epsilon>0$, there exists $y_k\in\rone$ and $\rho_{k, +}, \rho_{k,-}\in L^1_+(\rone)$, so that for all $k$ large enough, $\rho_{n_k}(x+y_k)= \rho_{k,+}(x)+\rho_{k,-}(x)+e_k$ and 
		\begin{eqnarray*}
			& & supp (\rho_{k,-})\subset (-\infty, -R_k), supp(\rho_{k,+}) \subset (R_k, +\infty), \ \ \lim_k R_k=\infty\\
			& & |\int_{R_k}^{+\infty} \rho_{k,+}(x) dx - \mu|<\epsilon, 	|\int_{-\infty}^{-R_k} \rho_{k,-}(x) dx - (\la-\mu)|<\epsilon, \int |e_k(x)| dx<\epsilon.
		\end{eqnarray*}
	\end{itemize}
\end{theorem}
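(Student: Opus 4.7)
The theorem has two parts: existence with regularity and decay of $\phi_{\om,p}$, and the spectral instability criterion \eqref{850} under non-degeneracy. I would handle them in turn.

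For existence, following the remark pointing at \eqref{510}, I would construct $\phi_{\om,p}$ as a maximizer of the Sobolev-saturating quotient
\begin{equation*}
J(u) \;=\; \frac{\|u\|_{L^p}^p}{\bigl(\|(D-1)u\|^2 + \om\|u\|^2\bigr)^{p/2}}.
\end{equation*}
I would normalize a maximizing sequence $\{u_n\}$ so that $\|(D-1)u_n\|^2 + \om\|u_n\|^2 = 1$ and apply Theorem \ref{theo:102} to $\rho_n := |(D-1)u_n|^2 + \om u_n^2$. Vanishing is ruled out because $J(u_n) \to J^\ast > 0$ combined with local Sobolev control of $\|u_n\|_{L^p}$ by $\|\rho_n\|_{L^1(\textup{local})}$ would force $\|u_n\|_{L^p} \to 0$. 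Dichotomy is excluded by the standard strict superadditivity of $L^p$-norms for $p>2$ versus the additivity of the $L^2$-based denominator. The compact alternative then yields, modulo translation, a nonzero maximizer $u^\ast$ whose Euler--Lagrange equation is a scalar multiple of \eqref{b:50}, and a single rescaling $\phi := \mu u^\ast$ produces the wave. Regularity then bootstraps: $\phi \in H^1 \hookrightarrow L^\infty$ gives $|\phi|^{p-2}\phi \in L^2$, so $\phi \in H^2$ via \eqref{b:50}; a second bootstrap using $\p_x(|\phi|^{p-2}\phi)=(p-1)|\phi|^{p-2}\phi' \in L^2$ yields $\phi \in H^3$. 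For the pointwise decay I would write $\phi = G_\om \ast (|\phi|^{p-2}\phi)$ with $\widehat{G_\om}(\xi) = ((|\xi|-1)^2 + \om)^{-1}$; the $C^{1,1}$ cusps of this symbol at $\xi = \pm 1$ induced by $|\xi|$ produce the sharp algebraic rate $|G_\om(x)| \lesssim (1+x^2)^{-1}$, inherited by $\phi$ since $|\phi|^{p-2}\phi \in L^1\cap L^\infty$. This is the same mechanism sketched after Theorem \ref{theo:10}.

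For spectral instability under $\ker\cl_+ = \mathrm{span}[\phi']$, I would invoke the KdV-type criterion established in Section \ref{sec:KdV} (Corollary \ref{cor:10} / Proposition \ref{prop:43}): once $n(\cl_+)=1$, spectral instability of $\p_x\cl_+$ is equivalent to the positivity $\langle \cl_+^{-1}\phi, \phi\rangle > 0$, with the inverse taken on $(\ker\cl_+)^\perp$. Pairing \eqref{b:50} with $\phi$ gives $\cl_+\phi = -(p-2)|\phi|^{p-2}\phi$ and $\langle\cl_+\phi,\phi\rangle<0$, so $n(\cl_+) \ge 1$; the upper bound $n(\cl_+)\le 1$ is obtained by a Sylvester-type count on the 2D dilation subspace $\mathrm{span}(\phi, x\phi')$ together with the single-constraint variational structure. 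The explicit computation of $\langle \cl_+^{-1}\phi,\phi\rangle$ proceeds via the ansatz $\psi = A\phi + B\, x\phi'$: using the commutator identity $[\cl_+, x] = -2\p_x - 2H$ on decaying $f$ (since $[D,x]f = Hf$) gives $\cl_+(x\phi') = -2\phi'' - 2D\phi$, and combining with the two Pohozaev identities
\begin{equation*}
\|(D-1)\phi\|^2 + \om\|\phi\|^2 = \|\phi\|_{L^p}^p, \qquad \|\phi'\|^2 - (1+\om)\|\phi\|^2 + \tfrac{2}{p}\|\phi\|_{L^p}^p = 0,
\end{equation*}
obtained by dilating \eqref{b:50}, I would solve the resulting small linear system for $A, B$ matching $\cl_+\psi = \phi$ modulo $\mathrm{span}[\phi']$ (irrelevant for $\langle\psi,\phi\rangle$ since $\langle\phi',\phi\rangle=0$). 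Eliminating $\|(D-1)\phi\|^2$, $\|\phi'\|^2$, and $\|\phi\|_{L^p}^p$ in favor of $\|\phi\|^2$ and $\om$ via the two identities, the condition $\langle\cl_+^{-1}\phi,\phi\rangle>0$ simplifies to the explicit quadratic-in-$p$ threshold \eqref{850}.

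The main obstacle is the rigorous control $n(\cl_+) = 1$: the non-locality of $(D-1)^2$ prevents direct Sturm--Liouville arguments, so one must rely on the variational characterization of $\phi$ and the signature of the Hessian of the action on the scaling subspace. A secondary complication is that $x\phi'$ is only marginally in $L^2$ owing to the $|\phi|\sim x^{-2}$ decay, so the commutator $[D,x] = H$ (modulo a rank-one anomaly killed by the decay) must be handled with care before the Pohozaev arithmetic is legitimate. As noted in the remarks after the theorem, \eqref{850} is almost certainly not sharp, and any improvement would require a cleverer test function or a direct eigenvalue perturbation in $\om$.
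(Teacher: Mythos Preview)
Your proposal addresses the wrong statement. The theorem you were asked to prove is Theorem~\ref{theo:102}, Lions' concentration--compactness lemma for sequences $\rho_n \in L^1_+(\rone)$ with fixed mass. In the paper this result is not proved at all: it is simply \emph{stated}, with attribution to \cite{Lions}, and then used as a black box in the existence proofs of Propositions~\ref{prop:10}, \ref{prop:80}, and \ref{prop:1001}.

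What you wrote is instead a proof sketch for Theorem~\ref{theo:30} (existence and conditional instability of the generalized Benjamin waves for all $p>2$). For that theorem your outline is reasonably close to the paper's own argument in Section~\ref{sec:4.2}: the paper also constructs the wave as a maximizer of the Sobolev-type inequality \eqref{510}, also obtains $n(\cl_+)=1$ from the variational characterization (Corollary~\ref{cor:40} and the line following it), and also establishes instability via the index count \eqref{e:20} combined with Proposition~\ref{prop:43} and the explicit test function $\eta = x\phi' + \tfrac{1}{2}\phi$ together with the Pohozaev identities \eqref{b:70}--\eqref{b:80} (Lemma~\ref{le:89}). One difference: the paper does \emph{not} compute $\langle \cl_+^{-1}\phi,\phi\rangle$ directly by solving a $2\times 2$ system on $\mathrm{span}(\phi, x\phi')$ as you propose; instead it shows $\langle \cl_+\eta,\eta\rangle < 0$ for $\eta\perp\phi$ under \eqref{850}, which falsifies $\cl_+|_{\{\phi\}^\perp}\ge 0$, and then invokes Proposition~\ref{prop:43} to conclude $\langle\cl_+^{-1}\phi,\phi\rangle>0$. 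Your direct route would, if carried out, potentially give a sharper threshold, but it requires verifying that $\cl_+^{-1}\phi$ actually lies in $\mathrm{span}(\phi, x\phi')$ modulo $\ker\cl_+$, which is not obvious and is not claimed in the paper.

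But none of this bears on Theorem~\ref{theo:102}, for which no proof was expected beyond the citation.
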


\subsection{Pohozaev's identities}
In this section, we develop the Pohozaev idenitities for the solutions of the elliptic problem \eqref{b:50}. To this end, we shall need the following useful commutation relation
\begin{equation}
	\label{b:60} 
	[D^s, x\p_x]=s D^s. 
\end{equation}
As we shall need to manipulate integrals  involving weights and $\phi', \phi''$, we  need a basic decay result for the derivatives $\phi', \phi''$. Such a statement is available, for example in \cite{ADM}, and it states 
$$
|\phi'(x)|+|\phi''(x)|\leq \f{C}{1+|x|^3}.
$$
In particular $x\phi'\in L^2(\rone)$. 
\begin{proposition}
	\label{prop:50} 
	
	Let $p>2$, $\om>0$ and $\phi\in H^3(\rone)$ is a solution of \eqref{b:50}. Then, we have the  Pohozaev's identities 
	\begin{eqnarray}
		\label{b:70} 
		& & 		\|\phi'\|^2- 2 \|D^{\f{1}{2}} \phi\|^2+(\om+1) \|\phi\|^2-\|\phi\|_p^p=0 \\
		\label{b:80} 
		& & \|\phi'\|^2 -(\om+1) \|\phi\|^2+\f{2}{p} \|\phi\|_p^p=0.
	\end{eqnarray}
\end{proposition}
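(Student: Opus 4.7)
The plan is to derive the two Pohozaev identities \eqref{b:70}, \eqref{b:80} by testing the profile equation $(D-1)^2\phi+\om\phi-|\phi|^{p-2}\phi=0$ against $\phi$ and $x\phi'$ respectively, and then simplifying via self-adjointness of $D$, integration by parts, and the commutation relation \eqref{b:60}. All integrals and boundary terms that appear are finite thanks to the a priori decay estimates $|\phi(x)|\le C(1+x^2)^{-1}$ and $|\phi'(x)|+|\phi''(x)|\le C(1+|x|^3)^{-1}$ recalled just before the statement; in particular $x\phi'\in L^2(\rone)$ and every boundary contribution vanishes.

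For \eqref{b:70}, I would pair the equation with $\phi$ in $L^2$ and expand
$$
\|(D-1)\phi\|^2=\|D\phi\|^2-2\langle D\phi,\phi\rangle+\|\phi\|^2=\|\phi'\|^2-2\|D^{1/2}\phi\|^2+\|\phi\|^2,
$$
using $\|D\phi\|=\|\phi'\|$, self-adjointness of $D$, and $D=D^{1/2}D^{1/2}$. Absorbing the residual $\|\phi\|^2$ into $\om\|\phi\|^2$ then yields \eqref{b:70} on the nose.

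For \eqref{b:80}, I would pair with $x\phi'$. The three local terms produce, via integration by parts,
$$
\int(-\phi'')\,x\phi'\,dx=\tfrac12\|\phi'\|^2,\quad \int(\om+1)\phi\cdot x\phi'\,dx=-\tfrac{\om+1}{2}\|\phi\|^2,\quad \int(-|\phi|^{p-2}\phi)\,x\phi'\,dx=\tfrac{1}{p}\|\phi\|_p^p.
$$
The decisive step is the nonlocal contribution $-2\int D\phi\cdot x\phi'\,dx$. I would invoke \eqref{b:60} with $s=1$, which gives $D(x\p_x\phi)=x(D\phi)'+D\phi$, together with the self-adjointness of $D$, to rewrite
$$
\int D\phi\cdot x\phi'\,dx=\int x\phi(D\phi)'\,dx+\|D^{1/2}\phi\|^2;
$$
a further integration by parts on the first term on the right collapses this to
$$
\int D\phi\cdot x\phi'\,dx=-\|D^{1/2}\phi\|^2-\int D\phi\cdot x\phi'\,dx+\|D^{1/2}\phi\|^2=-\int D\phi\cdot x\phi'\,dx,
$$
forcing $\int D\phi\cdot x\phi'\,dx=0$. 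Summing the four contributions and doubling then delivers \eqref{b:80}.

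I expect the principal obstacle to be precisely this nonlocal cancellation: it relies on the exact commutator identity \eqref{b:60} and on enough decay of $\phi$, $\phi'$, $\phi''$ to legitimize the operator calculus and each of the integrations by parts invoked above. No other step appears delicate.
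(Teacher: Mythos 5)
Your proposal is correct and follows essentially the same route as the paper: test the profile equation against $\phi$ for \eqref{b:70} and against $x\phi'$ for \eqref{b:80}, kill the cross term $\int D\phi\cdot x\phi'\,dx$ via the commutator identity \eqref{b:60} with $s=1$ plus one integration by parts, and handle the local terms by elementary integration by parts. The only cosmetic difference is that you spell out the expansion of $\|(D-1)\phi\|^2$ and the decay justifications, which the paper leaves implicit.
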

\begin{proof}
	For the proof of \eqref{b:70}, we only need to take dot product of \eqref{b:50}, with $\phi$. 
	For \eqref{b:80}, we take dot product of \eqref{b:50} with 
	$x\phi'$. We have, after using $D^*=D$, 
	\begin{equation}
		\label{b:90} 
		\dpr{\phi}{D^2(x\p_x \phi)}-2 \dpr{\phi}{D(x\p_x \phi)}+(\om+1)\int x\phi'\phi dx- \int x |\phi|^{p-2} \phi\phi' dx=0.
	\end{equation}
	We have,   
	\begin{eqnarray*}
		\dpr{\phi}{D^2(x\p_x \phi)} &=& -\dpr{\phi''}{x\phi'}=\f{1}{2} \|\phi'\|^2.
	\end{eqnarray*}
	Next, by using \eqref{b:60} with $s=1$, we obtain 
	\begin{eqnarray*}
		\dpr{\phi}{D(x\p_x \phi)} &=&   \dpr{\phi}{x\p_x D\phi} + \dpr{\phi}{D \phi} = - \dpr{D(\phi+x\phi')}{\phi} +  \dpr{\phi}{D \phi} =\\
		&=& - \dpr{D(x\phi')}{\phi}
	\end{eqnarray*}
	We obtain 
	$
	\dpr{\phi}{D(x \phi')} =0.
	$
	
	Finally, elementary itegration by parts yields 
	$$
	\int x\phi'\phi dx = -\f{1}{2} \|\phi\|^2; \ \ \int x |\phi|^{p-2} \phi\phi' = -\f{1}{p} \|\phi\|_p^p.
	$$
	Putting it all together, we obtain \eqref{b:80}. 	 
\end{proof}


\section{Spectral stability for solitary waves of KdV-type problems}
\label{sec:KdV}
In this section, we revisit some well-known criteria for KdV-like problems, like the instability index theory, as presented in \cite{LZ}, see also earlier works \cite{KKS1}, \cite{KKS2}, \cite{Pel}. We  complement these results by some new ones, which are to the best of our knowledge  new (at least in the generality stated herein). These will be useful in the sequel. We start with the general theory, roughly following \cite{LZ}. 

\subsection{Introduction to instability index count}
Introduce first the notion of Morse index for a self-adjoint operator $S$ as the dimension of the negative subspace of $S$, to be denoted henceforth as $n(S)$. 

For an eigenvalue problem of the form 
\begin{equation}
	\label{e:10} 
	\cj\cl u=\la u,
\end{equation}
with $\cj^*=-\cj, \cl^*=\cl$, we introduce the generalized kernel 
$$
gKer(\cj\cl)=span\{u: (\cj\cl)^l u=0, l=1, 2, \ldots \}
$$
In the case of finite dimensionality of such linear subspace, i.e. $dim(gKer(\cj\cl))<\infty$, we introduce a basis $\{\eta_j\}_{j=1}^N$, and a symmetric matrix $\cd$, with entries 
$$
\cd_{i j}=\cd_{ji}=\dpr{\cl \eta_i}{\eta_j}
$$
On the other hand, introduce three different type of point spectra arising in the eigenvalue problem \eqref{e:10}, namely unstable real spectrum, denoted $k_r:=\{\la>0: \la\in \si_{p.p.}(\cj\cl)\}$, the unstable complex spetra, $k_c=\{\la: \la\in \si_{p.p.}(\cj\cl), \Re\la>0, \Im\la>0\}$ and finally, the marginally stable spectrum with negative Krein signature, defined by $k_{i}^-=\{i \mu, \mu>0: \cj\cl f=i \mu f, \dpr{\cl f}{f}<0 \}$. The instability index formula then reads 
\begin{equation}
	\label{e:20} 
	k_r+2 k_c+2k_i^-=n(\cl)-n(\cd).
\end{equation}
Note that the right-hand side of \eqref{e:20} overcounts the number of instabilities for the eigenvalue problem \eqref{e:10}. In the particular case $n(\cl)=1$, which will be of the main interest herein, the formula \eqref{e:20} yields a rather precise information about the stability for \eqref{e:10}, namely that the stability occurs exactly when $n(\cd)=1$. 
\subsection{Stability criteria for KdV-like eigenvalue problems}
\label{sec:2.4}
   Suppose that we consider an eigenvalue problem in the form 
   \begin{equation}
   	\label{200} 
   	\p_x \cl v = \la v,
   \end{equation}
where $\cl$ is a self-adjoint operator,  with $D(\cl)\subset L^2(\rone)$,  so that $\cl$ maps real valued into real valued functions. We have the following Proposition. 
\begin{proposition}
	\label{prop:30} 
	Assume that the eigenvalue problem \eqref{200} is spectrally 
	unstable and  $\cl[\psi']=0$ for some smooth {\it real-valued} function $\psi$. That is, there exists a $\la: \Re\la>0$ and $v\in D(\p_x \cl)$, so that \eqref{200} holds true. Then,
	$$
	n(\cl|_{\{\psi\}^\perp})\geq 1.
	$$
	Equivalently 
	$$
	\inf\{\dpr{\cl h}{h}: h\in D(\cl), h\perp \psi, \|h\|=1\}<0.
	$$
\end{proposition}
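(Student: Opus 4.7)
Let $\la = a + ib$ with $a>0$ be the unstable eigenvalue and $v \in D(\p_x\cl)$ a nontrivial eigenfunction, $\p_x\cl v = \la v$. Decompose $v = v_1 + i v_2$ into real and imaginary parts. The plan is to manufacture a real test function $h \in \{\psi\}^\perp$ with $\dpr{\cl h}{h} < 0$; rescaling to unit norm then gives the Morse index bound.

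The first key observation is that both $v_1, v_2$ are automatically orthogonal to $\psi$. Pair the eigenvalue equation with $\psi$:
\begin{equation*}
\la\dpr{v}{\psi} = \dpr{\p_x \cl v}{\psi} = -\dpr{\cl v}{\psi'} = -\dpr{v}{\cl \psi'} = 0,
\end{equation*}
using self-adjointness of $\cl$ and $\cl\psi' = 0$. Since $\la \neq 0$ and $\psi$ is real, separating real and imaginary parts gives $v_1, v_2 \perp \psi$. The second key fact is that the Krein signature of the unstable mode vanishes: pairing the equation with $\cl v$, the left-hand side $\dpr{\p_x w}{w}$ with $w := \cl v$ is purely imaginary because $\p_x$ is skew-adjoint, while the right-hand side is $\la\dpr{v}{\cl v}$ and $\dpr{v}{\cl v}$ is real. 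Since $\Re\la > 0$, this forces $\dpr{\cl v}{v} = 0$, which (after expanding in real/imaginary parts and noting that the cross terms cancel by self-adjointness) becomes
\begin{equation*}
\dpr{\cl v_1}{v_1} + \dpr{\cl v_2}{v_2} = 0.
\end{equation*}

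From here I distinguish two cases. In the \emph{generic case} one of $\dpr{\cl v_j}{v_j}$ is strictly negative, and since $v_j \perp \psi$, that $v_j$ is the required direction. In the \emph{isotropic case} both vanish. Here I introduce $u_j := \cl v_j$ and its projection $\tilde u_j := u_j - \|\psi\|^{-2}\dpr{u_j}{\psi}\psi \in \{\psi\}^\perp$, and test on $h := v_j + t\tilde u_j$, which lies in $\{\psi\}^\perp$. Since $P$ is an orthogonal projection, $\dpr{\cl v_j}{\tilde u_j} = \dpr{u_j}{\tilde u_j} = \|\tilde u_j\|^2$, whence
\begin{equation*}
\dpr{\cl h}{h} = 2t\|\tilde u_j\|^2 + t^2 \dpr{\cl \tilde u_j}{\tilde u_j},
\end{equation*}
which is negative for all sufficiently small $t < 0$, provided $\tilde u_j \neq 0$.

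The main technical obstacle is to rule out the degenerate situation $\tilde u_1 = \tilde u_2 = 0$ in the isotropic case. If both vanished, then $\cl v_j = c_j \psi$ for scalars $c_j$, so that $\p_x \cl v_j = c_j \psi'$. Comparing with the real/imaginary decomposition $\p_x\cl v_1 = a v_1 - b v_2$, $\p_x\cl v_2 = b v_1 + a v_2$ of the eigenvalue equation, the invertibility of the matrix $\left(\begin{smallmatrix} a & -b \\ b & a \end{smallmatrix}\right)$ (determinant $|\la|^2 > 0$) would force $v_1, v_2 \in \mathrm{span}(\psi')$. But then $v \in \ker(\p_x \cl)$, contradicting $\p_x \cl v = \la v \neq 0$. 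Hence at least one $\tilde u_j$ is nonzero, closing the argument.
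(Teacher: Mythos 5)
Your proof is correct and follows essentially the same route as the paper's: the orthogonality $v_1,v_2\perp\psi$ obtained by pairing the eigenvalue equation with $\psi$, the vanishing-signature identity $\dpr{\cl v_1}{v_1}+\dpr{\cl v_2}{v_2}=0$, the same case split, and the same terminal contradiction showing that $\cl v_j=c_j\psi$ forces $v_1,v_2\in\mathrm{span}(\psi')$ and hence $v=0$. The only variation is in the isotropic case: where the paper passes through ``the infimum equals zero, hence $P_{\{\psi\}^\perp}\cl P_{\{\psi\}^\perp}\geq 0$, hence $\cl v_j=c_j\psi$,'' you establish the same implication directly with the first-order perturbation $h=v_j+t\tilde u_j$, which is just an explicit unpacking of the semi-definite Cauchy--Schwarz step the paper invokes implicitly.
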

\begin{proof}
	Suppose that instability occurs. Since $\p_x \cl v = \la v$, take a dot product with $\psi$ 
	$$
	\la \dpr{v}{\psi}=\dpr{\p_x \cl v}{\psi}=-\dpr{ \cl v}{\psi'}=-\dpr{ v}{\cl   \psi'}=0.
	$$
	As $\la\neq 0$, it follows that $v\perp \psi$.  Then, $v=v_1+i v_2, v_1\perp \psi, v_2\perp \psi$,  $\la=\la_1+i \la_2, \la_1>0$. We have, in real and imaginary entries
	\begin{equation}
		\label{198} 
			\left\{
		\begin{array}{l}
			\p_x \cl  v_1=\la_1 v_1 - \la_2 v_2 \\
			\p_x \cl v_2 = \la_2 v_1 + \la_1 v_2 
		\end{array}
		\right.
	\end{equation}
	Take dot products with the real-valued functions $\cl  v_1, \cl v_2$ respectively and add.  The result is 
	$$
	\la_1\left(\dpr{\cl v_1}{v_1}+ \dpr{\cl  v_2}{v_2}\right)=0.
	$$
	and so 
	\begin{equation}
		\label{210} 
			\dpr{\cl  v_1}{v_1}+ \dpr{\cl  v_2}{v_2} =0.
	\end{equation}
Clearly, as one of the terms above is non-positive, 	$\inf\{\dpr{\cl h}{h}: h \perp \psi, \|h\|=1\}\leq 0$.  We claim that in fact,
	$$
	\inf\{\dpr{\cl h}{h}:  h\perp \psi, \|h\|=1\}<0,
	$$
	which is equivalent to  the statement of the Proposition. 
	Indeed, if one of the expressions in \eqref{210} is strictly negative, we are done, as $v_1\perp \psi,  v_2\perp \psi$. Otherwise, $\dpr{\cl  v_1}{v_1}= \dpr{\cl  v_2}{v_2} =0$ and 
	$$
 	\inf\{\dpr{\cl h}{h}:  h\perp \psi, \|h\|=1\}=0. 
	$$
	That is $P_{\{\psi\}^\perp}\cl P_{\{\psi\}^\perp}\geq 0$
	  But then, as $v_1\perp \psi,  v_2\perp \psi$, we conclude that $\cl  v_1=c_1 \psi; \cl  v_2=c_2\psi$. But in such a case, from \eqref{198}, 
		\begin{equation}
			\label{204} 
			\left\{
			\begin{array}{l}
				c_1 \psi'=\la_1 v_1 - \la_2 v_2 \\
				c_2\psi' = \la_2 v_1 + \la_1 v_2 
			\end{array}
			\right.
		\end{equation}
	 Multiplying the first equation by $\la_1$, the second one by $\la_2$ and adding results in 
	 $$
	 (\la_1^2+\la_2^2) v_1 = (\la_1 c_1+\la_2 c_2)\psi',
	 $$
	 whence $v_1=C_1 \psi'$ and hence $\cl v_1=0$. Similarly,  multiplying the first equation of \eqref{204} by $-\la_2$, the second one by $\la_1$ and adding leads to $v_2=C_2 \psi'$, and again $\cl v_2=0$. From \eqref{198}, we now have 
	 $$
	 \left\{
	 \begin{array}{l}
	 	0=\la_1 v_1 - \la_2 v_2 \\
	 	0= \la_2 v_1 + \la_1 v_2 
	 \end{array}
	 \right.
	 $$	 
which by resolving the same way as above (note $\la_1>0$), yields $v_1=v_2=0$, which is contradictory as well.  
	\end{proof} 
An immediate and useful corollary of Proposition \ref{prop:30} is then the following. 
\begin{corollary}
	\label{cor:10} 
	Assume that $\cl, D(\cl)\subset L^2(\rone)$ enjoys the properties 
	\begin{itemize}
		\item $\cl=\cl^*$, which maps real-valued into real-valued functions. 
		\item $\cl[\psi']=0$ for some smooth {\it real-valued} function $\psi$.
		\item  $\cl|_{\{\psi\}^\perp}\geq 0$.
	\end{itemize}
Then,  the eigenvalue problem \eqref{200} is spectrally stable. 
\end{corollary}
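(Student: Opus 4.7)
The plan is to argue by contraposition, deriving the stability of \eqref{200} directly from Proposition \ref{prop:30}. Assume toward contradiction that the eigenvalue problem $\p_x \cl v = \la v$ admits a nontrivial solution $v \in D(\p_x \cl)$ with $\Re \la > 0$, i.e.\ the problem is spectrally unstable. The three standing hypotheses of the corollary---self-adjointness of $\cl$, its preservation of real-valuedness, and $\cl[\psi'] = 0$---are exactly what is required to invoke Proposition \ref{prop:30}. Its conclusion gives
$$
\inf\{\dpr{\cl h}{h} : h \in D(\cl),\ h \perp \psi,\ \|h\| = 1\} < 0,
$$
so there exists a unit vector $h \perp \psi$ with $\dpr{\cl h}{h} < 0$. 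This flatly contradicts the third hypothesis $\cl|_{\{\psi\}^\perp} \geq 0$, and the contradiction yields spectral stability.

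A subtle point worth highlighting is the interplay between the kernel element $\psi'$ and the ``constraint'' vector $\psi$: the coercivity we impose is on $\{\psi\}^\perp$, not on $\{\psi'\}^\perp$, and it is precisely the orthogonality condition $v \perp \psi$ that the Hamiltonian structure of \eqref{200} forces on any unstable eigenmode (via $\la \dpr{v}{\psi} = -\dpr{v}{\cl \psi'} = 0$). There is no further computation required here: Proposition \ref{prop:30} has already absorbed the delicate part, namely the algebraic argument ruling out the degenerate case $\dpr{\cl v_j}{v_j} = 0$ for $j = 1,2$.

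Since the proof is a one-line application of the contrapositive, the only genuine ``obstacle'' lives inside Proposition \ref{prop:30}; for the corollary itself the task reduces to verifying, as above, that each of its three hypotheses matches a hypothesis of the proposition and that the negation of its conclusion is the negation of the proposition's conclusion.
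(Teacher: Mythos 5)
Your argument is correct and is exactly the paper's intended proof: the corollary is stated as an immediate consequence of Proposition \ref{prop:30}, obtained by contraposition precisely as you describe. Your additional remark about why the constraint space is $\{\psi\}^\perp$ rather than $\{\psi'\}^\perp$ is accurate and consistent with the computation $\la\dpr{v}{\psi}=-\dpr{v}{\cl\psi'}=0$ carried out inside the proposition's proof.
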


 \subsection{General instability criteria for KdV-like e-value problems}
 In the previous Section \ref{sec:2.4}, specifically Corollary \ref{cor:10}, we saw that a stability for the eigenvalue problem\eqref{200}, may be deduced directly to the properties $\cl[\psi']=0$ and $\cl|_{\{\psi\}^\perp}\geq 0$. We now would like to develop an alternative instability criteria, which essentially complements the one presented in Corollary \ref{cor:10}. 
 \begin{proposition}
 	\label{prop:43} 
 	Consider the eigenvalue problem \eqref{200}. Assume that 
 	\begin{enumerate}
 		\item $\cl$ maps real-valued to real valued functions and  $\cl[\psi']=0$, for some real-valued function $\psi$. 
 		\item $n(\cl)=1$, $\psi\perp Ker[\cl]$ 
 		\item $\cl$ has a ground state $\psi_0$. That is 
 		$$
 		-\si_0^2=\inf\{\dpr{\cl u}{u}: u\in D(\cl), \|u\|=1 \}<0
 		$$
 		and there is $\psi_0\in D(\cl)$, so that $\cl\psi_0=-\si_0^2 \psi_0$. 
 		\item The condition $\cl|_{\{\psi\}^\perp}\geq 0$ fails, and in fact the following infimum is achieved, i.e. 
 		$$
 		\mu_0=\inf\{\dpr{\cl u}{u}: u\in D(\cl), u\perp \psi, \|u\|=1 \}<0
 		$$
 		and there is a function $\Psi_0\perp \psi$, so that 
 		\begin{equation}
 			\label{e:40} 
 				\cl\Psi_0=\mu_0\Psi_0+\al \psi.
 		\end{equation}
 	\end{enumerate}
 Then, $\dpr{\cl^{-1}\psi}{\psi}>0$. 
 \end{proposition}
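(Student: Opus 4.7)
The plan is to exploit the scalar resolvent function
\[
F(\lambda):=\dpr{(\cl-\lambda)^{-1}\psi}{\psi},
\]
defined on the resolvent set of $\cl$, together with the Lagrange equation \eqref{e:40}. The spectral theorem gives $F'(\lambda)=\|(\cl-\lambda)^{-1}\psi\|^2>0$ on each connected component of the resolvent set (strict since $\psi\neq 0$ and $(\cl-\lambda)^{-1}$ is a bijection there), so $F$ is strictly increasing on each such component. Since $\psi\perp Ker[\cl]$ by (2), the spectral measure $\mu_\psi$ has no atom at $0$, so $F$ extends continuously to $\lambda=0$ with $F(0)=\dpr{\cl^{-1}\psi}{\psi}$. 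By (2)--(3), the unique negative eigenvalue of $\cl$ is $-\si_0^2$, and by the min-max principle $\mu_0\geq -\si_0^2$. I would split into two cases accordingly.

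\textbf{Generic case $-\si_0^2<\mu_0<0$.} Here $\mu_0$ lies in the resolvent set, so \eqref{e:40} inverts to $\Psi_0=\al(\cl-\mu_0)^{-1}\psi$. Pairing with $\psi$ and using $\Psi_0\perp\psi$ yields $\al F(\mu_0)=0$. If $\al=0$, then $\cl\Psi_0=\mu_0\Psi_0$ with $\Psi_0\neq 0$, forcing $\mu_0=-\si_0^2$ by $n(\cl)=1$, a contradiction; hence $F(\mu_0)=0$. Strict monotonicity of $F$ on $[\mu_0,0]$ (no eigenvalues lie in $(-\si_0^2,0)$) then delivers $F(0)>F(\mu_0)=0$, which is the desired conclusion.

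\textbf{Degenerate case $\mu_0=-\si_0^2$.} Pairing \eqref{e:40} with $\psi_0$ and using $\cl\psi_0=-\si_0^2\psi_0$ kills the $\mu_0\Psi_0$ term against $\cl\Psi_0$ and yields $\al\dpr{\psi}{\psi_0}=0$. If $\al\neq 0$, then $\psi\perp\psi_0$ directly; if $\al=0$, then $\cl\Psi_0=-\si_0^2\Psi_0$, and simplicity of the ground state (standard for Schr\"odinger-type operators of this form) together with $\Psi_0\perp\psi$ again forces $\psi\perp\psi_0$. Either way, $\psi$ has vanishing spectral component on the negative eigenspace, so combining with $\psi\perp Ker[\cl]$ the spectral theorem gives
\[
\dpr{\cl^{-1}\psi}{\psi}=\int_{\sigma(\cl)\cap(0,\infty)}\frac{d\mu_\psi(s)}{s}>0.
\]

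The main obstacle I anticipate is the degenerate case $\mu_0=-\si_0^2$, which hinges on simplicity of the ground state and on controlling the spectral decomposition of $\psi$ near the threshold eigenvalue $0$; both are standard for the self-adjoint operators considered here but deserve explicit mention. A secondary technical point is justifying the continuous extension of $F$ to $\lambda=0$, which ultimately reduces to the hypothesis $\psi\perp Ker[\cl]$ built into (2).
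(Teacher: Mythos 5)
Your proof is correct and follows essentially the same route as the paper's: both hinge on the strictly increasing resolvent function $g(z)=\dpr{(\cl-z)^{-1}\psi}{\psi}$, the identity $g(\mu_0)=0$ extracted from \eqref{e:40} via $\Psi_0\perp\psi$, and a separate positive-subspace argument when $\psi$ has no component along $\psi_0$ (the paper organizes the cases by whether $\dpr{\psi}{\psi_0}=0$ rather than by whether $\mu_0=-\si_0^2$, but the sub-arguments coincide). One small correction: the simplicity of the ground state that you invoke in the degenerate case should be justified by $n(\cl)=1$ itself (the negative spectral subspace is one-dimensional), not by a Perron--Frobenius argument "standard for Schr\"odinger-type operators," since no such positivity property is available for the non-local operator $(D-1)^2+V$ at hand.
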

\begin{proof}
	We start with the case $\phi\perp \psi_0$. In this case,  since $\cl|_{\{\psi_0\}^\perp}\geq 0$ (recall $n(\cl)=1$, so only one negative e-value, $-\si_0^2$, is present) and $\psi\perp Ker[\cl]$ (in particular  $\cl^{-1}\psi$ is well-defined), we conclude  that $\psi$ belongs to the positive subspace of $\cl$, hence to the positive subspace of $\cl^{-1}$, whence  $\dpr{\cl^{-1}\psi}{\psi}>0$. 
	
	Assume now, $\dpr{\psi}{\psi_0}\neq 0$. In this case, by the properties of infimum taken over subspaces, $-\si_0^2\leq \mu_0<0$ and \eqref{e:40} holds for some $\Psi_0\perp \psi$. We claim that in fact $-\si_0^2<\mu_0$. Indeed, assume that $-\si_0^2=\mu_0$, and \eqref{e:40} holds. Then
	$$
	\cl \Psi_0=-\si_0^2 \Psi_0 +\al \psi
	$$
	Taking dot product with $\psi_0$ results in 
	$$
	-\si_0^2\dpr{\Psi_0}{\psi_0}=\dpr{\Psi_0}{\cl \psi_0}=\dpr{\cl\Psi_0}{\psi_0}=-\si_0^2 \dpr{\Psi_0}{\psi_0}+\al \dpr{\psi}{\psi_0}.
	$$
	Thus, since $\dpr{\psi}{\psi_0}\neq 0$, it follows that $\al=0$. But then, $\Psi_0$ is another eigenfunction of $\cl$, linearly independent from $\psi_0$, corresponding to the eigenvalue $-\si_0^2$, a contradiction with $n(\cl)=1$. 
	
	Thus, $-\si_0^2<\mu_0<0$ and \eqref{e:40} still holds for some $\al$. 
	We claim that $\al\neq 0$. Indeed, if we assume for a contradiction that $\al=0$, we have a second eigenfunction $\Psi_0$ (linearly independent from $\psi_0$, as $\Psi_0\perp \psi$, while $\dpr{\psi}{\psi_0}\neq 0$) corresponding to a negative eigenvalue $\mu_0$, in contradiction with $n(\cl)=1$. Thus, $\al\neq 0$, and then, we can infer from \eqref{e:40} that 
	$
	\Psi_0=\al (\cl-\mu_0)^{-1}\psi,
	$
	and so 
	$$
	0=\dpr{\Psi_0}{\psi}=\al \dpr{(\cl-\mu_0)^{-1}\psi}{\psi}
	$$
	whence $\dpr{(\cl-\mu_0)^{-1}\psi}{\psi}=0$. 
	Introduce the function 
	$$
	g(z)=\dpr{(\cl-z)^{-1} \psi}{\psi}, 
	$$
	over the spectrum-free interval $z\in [\mu_0, 0)$, which also well-defined at $z=0$. 
	Clearly, $g(\mu_0)=\dpr{(\cl-\mu_0)^{-1}\psi}{\psi}=0$, as established already, while 
	$$
	g'(z)=\dpr{(\cl-z)^{-2} \psi}{\psi}=\|(\cl-z)^{-1} \psi\|^2>0.
	$$
	Hence, 
	$$
	\dpr{\cl^{-1}\psi}{\psi}=g(0)=g(\mu_0)+\int_{\mu_0}^0 g'(z) dz = \int_{\mu_0}^0 g'(z) dz>0,
	$$
	as claimed. 
\end{proof}

   \section{Existence and stability of the waves  in the case of quadratic nonlinearity} 
     \label{sec:3}
   We  consider the inequality \eqref{20} for the case $p=3$ and arbitrary $\al>0$, 
   	\begin{eqnarray}
   	\label{22} 
   	\int_{-\infty}^\infty u^3(x) dx\leq C_{\al} \|u\|  \left(\|(D-1) u\|^2+ \al \|u\|^2\right). 
   \end{eqnarray}
   Here we take the value $C_\al$ to be the exact constant in \eqref{22}. In other words, 
   	\begin{equation}
   	\label{35} 
   C_\al:=\sup_{u\neq 0} I_\al[u]; \\ I_\al[u]:= \f{\int_{-\infty}^\infty u^3(x) dx}{\|u\|  \left(\|(D-1) u\|^2+ \al \|u\|^2\right) }.
\end{equation}
   \subsection{Existence of the waves}
   It is obviously not at all clear that a maximizer in \eqref{35} exists. This is the subject of the following proposition. 
   \begin{proposition}
   	\label{prop:10} 
   	For each $\al>0$, there exists a maximizer for \eqref{22}. That is, there exists a  function $\vp\in H^1(\rone)$, so that 
   $$
   		C_\al= \f{\int_{-\infty}^\infty \vp^3(x) dx}{\|\vp\|  \left(\|(D-1) \vp\|^2+ \al \|\vp\|^2\right)}.
   		$$
   \end{proposition}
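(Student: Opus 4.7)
The plan is a concentration--compactness argument on a maximizing sequence. Since $I_\al$ is homogeneous of degree zero in $u$, I take $\{u_n\} \subset H^1(\rone)$ with $\|u_n\| = 1$ and $I_\al[u_n] \to C_\al$; note $C_\al > 0$ by testing on any smooth positive bump, hence $\int u_n^3 > 0$ for large $n$. From the identity $\int u_n^3 = I_\al[u_n](\|(D-1)u_n\|^2 + \al)$, combined with the Gagliardo--Nirenberg bound $\|u\|_{L^3}^3 \leq C\|u\|^{5/2}\|u'\|^{1/2}$ and the equivalence $\|(D-1)u\|^2 + \al\|u\|^2 \sim \|u\|_{H^1}^2$ (for $\al > 0$ fixed), one concludes $\sup_n \|u_n\|_{H^1} < \infty$. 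I then apply Theorem~\ref{theo:102} to $\rho_n := u_n^2$ with $\la = 1$. Vanishing is excluded at once: by Lions' lemma under the $H^1$-bound, it would force $u_n \to 0$ in $L^3$, contradicting $\int u_n^3 \to C_\al(\|(D-1)u_n\|^2 + \al) \geq C_\al\,\al > 0$.

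The main obstacle is ruling out dichotomy. Using smooth cutoffs adapted to the density splitting, I decompose (after translation) $u_n = v_n + w_n + r_n$, with $v_n$ and $w_n$ having disjoint supports whose separation $R_n$ tends to infinity, $\|v_n\|^2 \to \mu \in (0,1)$, $\|w_n\|^2 \to 1-\mu$, and $r_n \to 0$ in $L^p(\rone)$ for $p \in [2,6]$ (by $H^1$-boundedness and interpolation). Disjoint supports yield, modulo $o(1)$, $\int u_n^3 = \int v_n^3 + \int w_n^3$ and $\|u_n\|^2 = \|v_n\|^2 + \|w_n\|^2$; similarly all local cross-terms in $\|(D-1)u_n\|^2$ vanish, and the only nontrivial contribution $-4\,\dpr{Dv_n}{w_n}$ from the nonlocal $D$ tends to zero as $R_n \to \infty$ (by Parseval, the Fourier integral $\int|\xi|\,\hat{v}_n(\xi)\overline{\hat{w}_n(\xi)}\,d\xi$ carries an oscillating phase of width $R_n$, giving Riemann--Lebesgue decay uniformly on the bounded $H^1$-sequence). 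Writing $A_v := \|(D-1)v_n\|^2 + \al\|v_n\|^2$ and $A_w$ analogously, it follows that $\|(D-1)u_n\|^2 + \al = A_v + A_w + o(1)$. Since $I_\al$ is scale-invariant, $I_\al[v_n], I_\al[w_n] \leq C_\al$, hence $\int v_n^3 \leq C_\al\sqrt{\mu}\,A_v + o(1)$ and $\int w_n^3 \leq C_\al\sqrt{1-\mu}\,A_w + o(1)$; summing and comparing with $\int u_n^3 = C_\al(A_v + A_w) + o(1)$ gives
\begin{equation*}
(1-\sqrt{\mu})\,A_v + (1-\sqrt{1-\mu})\,A_w \leq o(1),
\end{equation*}
which is impossible because both coefficients are strictly positive and $A_v \geq \al\mu > 0$, $A_w \geq \al(1-\mu) > 0$. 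The strict subadditivity $\sqrt{\mu} + \sqrt{1-\mu} > 1$ for $\mu \in (0,1)$ is what kills dichotomy, and is the structural reason for inserting the factor of $\|u\|$ in the denominator of $I_\al$.

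Having excluded vanishing and dichotomy, compactness holds: there exist translates $\tilde{u}_n := u_n(\cdot + y_n)$ tight in $L^2(\rone)$. Combining tightness with the $H^1$-bound and Rellich compactness, a subsequence $\tilde{u}_n \to \vp$ strongly in $L^2(\rone)$, and interpolation with the $H^1$-bound upgrades this to convergence in $L^3(\rone)$. Passing to the limit, $\|\vp\| = 1$ and $\int \vp^3 = \lim_n \int \tilde{u}_n^3$, while weak lower semicontinuity yields $\|(D-1)\vp\|^2 \leq \liminf_n \|(D-1)\tilde{u}_n\|^2$. Therefore $I_\al[\vp] \geq \limsup_n I_\al[\tilde{u}_n] = C_\al$, so $\vp$ is the desired maximizer.
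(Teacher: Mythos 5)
Your overall strategy --- concentration--compactness on a maximizing sequence, vanishing killed by the positivity of $\lim_n\int u_n^3$, dichotomy killed by the strict subadditivity that the factor $\|u\|$ in the denominator provides --- is exactly the route the paper takes. The differences are cosmetic: the paper normalizes the energy rather than the $L^2$ norm, applies Theorem \ref{theo:102} to the energy density $|(D-1)f_n|^2+\al f_n^2$ rather than to $u_n^2$, and reaches the dichotomy contradiction via Cauchy--Schwarz ($\sqrt{(\la-\mu)^2+\mu^2}<\la$) rather than your $(1-\sqrt{\mu})A_v+(1-\sqrt{1-\mu})A_w\le o(1)$, which is if anything slightly cleaner. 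Your a priori $H^1$ bound under $\|u_n\|=1$ and the final lower-semicontinuity step are both correct.

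The one step that does not hold up as written is the nonlocal cross term $\dpr{Dv_n}{w_n}$. ``Riemann--Lebesgue decay'' of $\int|\xi|\,\hat v_n(\xi)\overline{\hat w_n(\xi)}\,d\xi$ is not an argument: Riemann--Lebesgue provides neither a rate nor any uniformity over a family of functions changing with $n$, and the symbol $|\xi|$ is unbounded, so the oscillation heuristic cannot be converted into a limit. The claim is true, but the correct mechanism is the one the paper uses: the commutator bound \eqref{43}, $\|[D,\chi]f\|\le C\|\chi'\|_{L^\infty}\|f\|$, which for cutoffs living at scale $R_n$ produces errors of size $O(R_n^{-1})$; equivalently, the kernel of $D$ decays like $|x-y|^{-2}$ off the diagonal, so Schur's test gives $|\dpr{Dv_n}{w_n}|\le C R_n^{-1}\|v_n\|\,\|w_n\|$ for supports separated by $R_n$. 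The same issue recurs, unaddressed, in the cross terms between $v_n,w_n$ and the remainder $r_n$ inside $\|(D-1)u_n\|^2$: because you ran the dichotomy alternative on $\rho_n=u_n^2$, the remainder is small only in $L^2$, not in $H^1$, so $\dpr{(D-1)v_n}{(D-1)r_n}$ must first be reduced (again by disjoint supports plus the commutator/kernel bound) to terms in which $r_n$ appears undifferentiated before it can be absorbed into $o(1)$. With those two repairs the proof closes; the paper's choice of density makes this bookkeeping more direct, since the splitting of the quadratic form is then handed to you by the dichotomy alternative itself.
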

   \begin{proof}
   	Note first that by testing $I$ with any smooth positive function,  $C_\al>0$. 
   	
   	To this end,  note that the functional $I$  in \eqref{35} is homogeneous, so we can  take a maximizing sequence $f_n$ with 
   	$\|(D-1) f_n\|^2+ \al \|f_n\|^2=1$.   It follows that $\|f_n\|\leq \f{1}{\al}$, and after eventually taking a subsequence, we may assume without loss of generality that $0\leq \lim_n \|f_n\|=a\leq \f{1}{\al}$.  
   	
   	Let us see first that the case $\lim_n \|f_n\|=0$ is contradictory. Indeed, suppose $\lim_n \|f_n\|=0$, we use the estimate \eqref{25} with $p=3$, to conclude 
   	$$
   	\int f_n^3 \leq M_\al \|f_n\|^{\f{5}{2}} (\|(D-1) f_n\|^2+ \al \|f_n\|^2)^{\f{1}{4}}=M_\al \|f_n\|^{\f{5}{2}}. 
   	$$
   	In such a case, 
   	$
   	  I[f_n]= \f{\int f_n^3}{\|f_n\|} \leq M_\al \|f_n\|^{\f{3}{2}}
   	$
   	whence 
   	$$
   	C_\al=\lim_n I[f_n]=\lim_n M_\al \|f_n\|^{\f{3}{2}}=0,
   	$$
   	a contradiction, since $C_\al>0$. 
   	Thus, by another rescaling,  we can  assume that $f_n\in H^1$, with 
   	\begin{equation}
   		\label{45} 
   			\lim_n \|f_n\|=1, \lim_n \|(D-1) f_n\|^2+ \al \|f_n\|^2=\la>0,
   	\end{equation}
   while $\{f_n\}$ is maximizing for \eqref{22} and so 
   \begin{equation}
   	\label{72} 
   	\lim_n \int f_n^3 dx = C_\al \la.
   \end{equation}
   	We apply the Lion's compensated compactness approach, see Theorem \ref{theo:102},  for the sequence 
   	\begin{equation}
   		\label{78} 
   			g_n =|(D-1) f_n|^2+ \al f_n^2\in L^1_+(\rone), \|g_n\|_{L^1}=\la.
   	\end{equation}
   	Our task is to refute the dichotomy and vanishing alternatives for $g_n$. 
   	\subsubsection{Splitting alternative does not hold} 
   	Assume first that splitting holds. Taking into account that the functional $I$ is translation invariant, i.e. $I[u(y+\cdot)]=I[u]$, we may replace $f_n\to f_n(y_n+\cdot)$ and so without loss of generality assume that $y_k=0$ in the splitting alternative. Also, passing to a subsequence allows us to represent 
   	\begin{equation}
   		\label{40} 
   			|(D-1) f_k|^2+ \al f_k^2 = g_{k,+}+g_{k,-}+e_k
   	\end{equation}
   where $support(g_{k,-})\subset (-\infty, -R_k), 	support(g_{k,+})\subset (R_k, +\infty)$, with some reals   $R_k: \lim_k R_k=+\infty$, so that 	for some $\mu\in (0, \la)$ and for all $\epsilon>0$ and all large enough $k>k_0(\epsilon)$, we have 
   \begin{equation}
   	\label{48} 
   	    |\int_{R_k}^{+\infty} g_{k,+}(x) dx - \mu|<\epsilon, 	|\int_{-\infty}^{-R_k} g_{k,-}(x) dx - (\la-\mu)|<\epsilon, \int |e_k(x)| dx<\epsilon.
   \end{equation}
   Introduce smooth functions $\chi_{\pm}$, so that $0\leq \chi_{\pm}(x)\leq 1$, 
   	$$
   	\chi_-(x)=\left\{
   	\begin{matrix}
   		1  & x<-1 \\
   		0 & x>-\f{1}{2}.
   		\end{matrix}
   	\right.\ \ \ \chi_+(x)=\left\{
   	\begin{matrix}
   		0 & x<\f{1}{2} \\
   		1 & x>1
   	\end{matrix}
   	\right.
   	$$
   	and $\chi_{n,\pm}(x):=\chi_\pm(x/R_n)$. That is, the functions $\chi_{n,-}$ restricts smoothly to the region $(-\infty, -R_n)$, and vanishes for $x>-\f{R_n}{2}$. Similar for $\chi_{n,+}$. 
   	
   	Multiplying \eqref{40} with $\chi_{n,+}^2$ results in 
   	\begin{equation}
   		\label{50} 
   		|\chi_{n,+} (D-1) f_n|^2+\al |\chi_{n,+} f_n|^2 = g_{n,+}+\chi_{n,+}^2 e_n.
   	\end{equation}
   	due to support considerations for $g_{n, \pm}$. Denoting $f_{n,+}:=\chi_{n,+} f_n$, we claim that for some absolute constant $C$ and for all $n$ large enough, 
   		\begin{equation}
   		\label{55} 
   	|\int 	|(D-1) f_{n,+}|^2+\al |f_{n,+}|^2  - \mu|\leq C(\epsilon+R_n^{-1}).
   	\end{equation}
   	Indeed, \eqref{50} is not far from this fact, except that there is a standard commutator term to be estimated. Specifically, by H\"older's and the equivalence $\|u\|_{H^1}\sim \|(D-1) u\|+\al \|u\|$, 
   	\begin{eqnarray*}
   	& & 	\int \left|	|(D-1) f_{n,+}|^2+\al |f_{n,+}|^2 - 	|\chi_{n,+} (D-1) f_n|^2-\al |\chi_{n,+} f_n|^2\right| dx=	\\
   	&=& \int |	|(D-1) f_{n,+}|^2 - 	|\chi_{n,+} (D-1) f_n|^2| dx \leq  \\
   		&\leq & (\| (D-1) f_{n,+}\| + \| (D-1) f_{n}\|)\| [D,\chi_{n,+}] f_n\|_{L^2} \leq \f{C}{R_n}  \|f_n\|_{H^1}^2  \leq  \f{C}{R_n}.  
   	\end{eqnarray*}
   	Based on \eqref{50}, \eqref{48}  and the last estimate, we conclude \eqref{55}. An identical argument establishes 
   		\begin{equation}
   		\label{60} 
   		\left|\int 	|(D-1) f_{n,-}|^2+\al |f_{n,-}|^2  - (\la-\mu)\right|\leq C(\epsilon+R_n^{-1}).
   	\end{equation}
   	We now estimate the value of $\int f_n^3$. We have 
   	\begin{equation}
   		\label{80}
   		\int f_n^3 = \int (f_n\chi_{n,+}+f_n\chi_{n,-}+f_n(1-\chi_{n,+}-\chi_{n,-}))^3 = 	\int f_{n,-}^3+\int f_{n,+}^3 + 
   		\int f_n^3 E_n(x) dx,
   	\end{equation}
   	where $|E_n(x)|\leq C ((1-\chi_{n,+})\chi_{n,-} + (1-\chi_{n,-})\chi_{n,+})$. Thus, 
 	\begin{equation}
 	\label{85}
   	|	\int f_n^3 E_n(x) dx|\leq C \|f_n\|^2 (\| f_n (1-\chi_{n,+})\chi_{n,-} \|_{L^\infty}+ \| f_n (1-\chi_{n,-})\chi_{n,+} \|_{L^\infty}).
\end{equation}
   	
   Next, we estimate $\| f_n (1-\chi_{n,+})\chi_{n,-} \|_{L^\infty}, \| f_n (1-\chi_{n,-})\chi_{n,+} \|_{L^\infty})$ appropriately, based on \eqref{48}.  Since they are similar, we will do only one of them, say 
   	$\| f_n (1-\chi_{n,-})\chi_{n,+} \|_{L^\infty})$. Denote $\zeta_n:=(1-\chi_{n,-})\chi_{n,+} $, so that $f_n \zeta_n$ is the object under consideration .  
   	
   	Multiply \eqref{40} by $\zeta_n^2$. By support considerations (note that the function $\zeta_n$ vanishes on the supports of 
   	$g_{n,  \pm}$), we have 
   	$$
   		|\zeta_n (D-1) f_n|^2+ \al (\zeta_n f_n)^2 = e_n \zeta_n^2. 
   	$$
   	Furthermore,  we have 
   	\begin{equation}
   		\label{70} 
   			| (D-1) (f_n \zeta_n)|^2+ \al (\zeta_n f_n)^2 =  e_n \zeta_n^2+ 	| (D-1) (f_n \zeta_n)|^2-|\zeta_n (D-1) f_n|^2
   	\end{equation}
   	Taking absolute values and integrating the last equality, and taking into account \eqref{48}, we obtain 
   	$$
   	\int 	| (D-1) (f_n \zeta_n)|^2+ \al (\zeta_n f_n)^2\leq C \epsilon + \int \left|(D-1) (f_n \zeta_n)|^2-|\zeta_n (D-1) f_n|^2\right| dx
   	$$
   	We had to deal with similar expression above, except the cutoff function was $\chi_{n,+}$ instead of $\zeta_n$. Same proof proceeds (as we have only used $\|\chi_n'\|_{L^\infty}\leq C R_n^{-1}$, which is also true for $\zeta_n$) with the same bound 
   	$$
   	\int \left|(D-1) (f_n \zeta_n)|^2-|\zeta_n (D-1) f_n|^2\right| dx\leq \frac{C}{R_n}.
   	$$
   	Going back to \eqref{70}, we record the estimate obtained herein 
   	\begin{equation}
   		\label{75} 
   			\int 	| (D-1) (f_n \zeta_n)|^2+ \al (\zeta_n f_n)^2\leq C(\epsilon+R_n^{-1}).
   	\end{equation}
   We now have by Sobolev embedding, 
   \begin{equation}
   	\label{285} 
   	  \|f_n\zeta_n\|_{L^\infty}^2 \leq C \|f_n\zeta_n\|_{H^1}^2\leq 
   	C	\int 	| (D-1) (f_n \zeta_n)|^2+ \al (\zeta_n f_n)^2\leq C(\epsilon+R_n^{-1}).
   \end{equation}
   	Taking into account \eqref{80} and \eqref{85}, we obtain 
   	\begin{equation}
   		\label{100} 
   			\int f_n^3 = \int f_{n,-}^3+ \int f_{n,+}^3+O(\epsilon+R_n^{-1}).
   	\end{equation}
   Furthermore, by passing to a subsequence, if necessary,  we may assume (due to $\|f_n\|=1$ and support considerations), 
    that there are $a_{\pm}\geq 0$, so that 
    \begin{equation}
    	\label{105} 
    	    \lim_n \|f_{n, \pm}\|=a_\pm, a_-^2+a_+^2\leq 1. 
    \end{equation}
    We now apply the estimate \eqref{22} for $ \int f_{n,\pm}^3$. We have 
    \begin{eqnarray*}
    \int f_{n,-}^3+ \int f_{n,+}^3 &\leq &   C_\al(\|f_{n,-}\|(\|(D-1)f_{n,-}\|^2+\al \|f_{n,-}\|^2)+\|f_{n,+}\|(\|(D-1)f_{n,+}\|^2+\al \|f_{n,+}\|^2)\leq \\
    	 &\leq & C_\al  ((\la-\mu) \|f_{n,-}\|+ \mu \|f_{n,+}\|)+ C(\epsilon+R_n^{-1}),
    \end{eqnarray*}
   	where we have used  the relations \eqref{55} and \eqref{60}. Combing this with \eqref{100} yields the bound 
   	$$
   		\int f_n^3\leq C_\al  ((\la-\mu) \|f_{n,-}\|+\mu  \|f_{n,+}\|)+ C(\epsilon+R_n^{-1}). 
   	$$
   	Now, take $\lim_n$. By \eqref{72}, \eqref{105} and Cauchy-Schwartz, we have 
   	$$
   	C_\al \la\leq C_\al ((\la-\mu)a_-+\mu a_+)+C\epsilon\leq C_\al\sqrt{a_-^2+a_+^2}\sqrt{(\la-\mu)^2+\mu^2} +C\epsilon\leq C_\al \sqrt{(\la-\mu)^2+\mu^2} +C\epsilon. 
   	$$
   	This is clearly contradictory, for sufficiently small $\epsilon$,  as $\sqrt{(\la-\mu)^2+\mu^2}<\la$, due to $\mu\in (0, \la)$. This shows that the splitting alternative does not hold. 
   	
   	\subsubsection{Vanishing does not hold}
   	Vanishing for $g_n$ is in fact  much simpler to refute. Indeed, assume that it holds for say $R=1$. Then, 
   	$$
   	\lim_k \left(  \sup_{y\in\rone} \int_{y-1}^{y+1} |(D-1)f_{n_k}|^2+\al |f_{n_k}|^2 dx    \right)=0.
   	$$
   	In particular, it follows that 
   	\begin{equation}
   		\label{110} 
   		\lim_k \left(  \sup_{y\in\rone} \int_{y-1}^{y+1}  |f_{n_k}|^2 dx    \right)=0.
   	\end{equation}
   	Recall the  cut-off function $\eta\in C^\infty_0(\rone)$, so that $0\leq \eta<1, \eta(x)=1, |x|<1, \eta(x)=0, |x|>2$. We have for each integer  $j$, 
   	\begin{eqnarray*}
   	& & 	\int_{j-1}^{j+1} f_{n_k}^3(x) dx  \leq 	\int  |f_{n_k}(x) \eta(j-x)|^3 dx\leq \\
   		& \leq & 
   		C_\al \|f_{n_k}\eta(j-\cdot)\|(\|(D-1)f_{n_k}\eta(j-\cdot)\|^2+\al \|f_{n_k}\eta(j-\cdot)\|^2)\leq C \|f_{n_k}\eta(j-\cdot)\| \| f_{n_k}\eta(j-\cdot)\|_{H^1}^2.
   	\end{eqnarray*}
   	where we have 
   	applied \eqref{30}with $p=3$, and the equivalence $\|(D-1)u\|^2+\al \|u\|^2\sim \|u\|_{H^1}^2$. Fixing an arbitrary $\epsilon>0$, we can find, as a consequence of \eqref{110}, a  large $k_0$, so that $\|f_{n_k}\eta(j-\cdot)\|<\epsilon$ for all $k>k_0$ and for all $j$. Adding up the last estimate in integer $j$, we obtain 
   	$$
   	2 \int_{-\infty}^{+\infty} f_{n_k}^3(x) dx\leq C \epsilon \sum_{j=-\infty}^{+\infty} \| f_{n_k}\eta(j-\cdot)\|_{H^1}^2\leq C \epsilon \|f_{n_k}\|_{H^1}^2\leq C \epsilon,
   	$$
   	where we have used the orthogonality in the $j$ sum and $1=\|g_{n_k}\|_{L^1}\sim  \|f_{n_k}\|_{H^1}^2$. This clearly contradicts the setup for appropriately small  $\epsilon$, since $\lim_k \int_{-\infty}^{+\infty} f_{n_k}^3(x) dx=C_\al \la$. 
   	\end{proof}
   
   \subsubsection{Completion of the proof of Proposition \ref{prop:10}}
   As we have shown that vanishing and splitting are not viable alternatives, compactness/tightness holds. After taking translations and passing to a subsequence, we may assume that for every $\epsilon>0$, there exists $R=R(\eps)$ and $k_0=k_0(\eps)$, 
   so that for all $k\geq k_0$, we have 
   $$
   \int_{-R}^R |(D-1) f_{n_k}|^2+\al |f_{n_k}|^2 dx >\la-\eps.
   $$
   It follows that there is a further subsequence, which {\it strongly} converges in $H^1(\rone)$ to a limit $\vp$, so that $ \int_{-R}^R |(D-1) \vp|^2+\al \vp^2 dx=\la$, $\|\vp\|=1$. As a consequence of the Sobolev embedding $H^1(\rone)\hookrightarrow L^3(\rone)$, this further sequence converges strongly to $\vp$ in $L^3$ as well, whence $\vp$ is a maximizer of the inequality \eqref{22}. With this, the proof of Proposition \ref{prop:10} is complete. 
   
   Next, we derive properties of the function $\al\to C_\al$. 
   \subsubsection{Properties of the map $\al\to C_\al$}
   \begin{proposition}
   	\label{prop:35} 
   	The map $\al\to C_\al$, defined on $\rone_+$ is a continuous, strictly  decreasing function. In addition 
   	\begin{equation}
   		\label{320} 
   			\lim_{\al\to 0+} C_\al=+\infty, \lim_{\al\to +\infty} C_\al=0.
   	\end{equation}
   \end{proposition}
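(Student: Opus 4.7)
The strict monotonicity follows directly from the existence of a maximizer: for $0 < \al_1 < \al_2$, Proposition \ref{prop:10} provides $\vp_{\al_2}$ with $\int\vp_{\al_2}^3 > 0$ (since $C_{\al_2}>0$), which gives $C_{\al_1}\geq I_{\al_1}[\vp_{\al_2}] > I_{\al_2}[\vp_{\al_2}] = C_{\al_2}$. For continuity, I would first establish the pointwise sandwich
\[
I_{\al_2}[u]\leq I_{\al_1}[u] \leq \frac{\al_2}{\al_1}\,I_{\al_2}[u] \qquad (0 < \al_1 \leq \al_2),
\]
whose right inequality reduces to the algebraic fact $(A + \al_2 B)/(A + \al_1 B) \leq \al_2/\al_1$ for $A,B\geq 0$; taking supremum yields $C_{\al_2}\leq C_{\al_1}\leq (\al_2/\al_1)\,C_{\al_2}$, and the bounds collapse as $\al_1/\al_2\to 1$. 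For the limit at $+\infty$, I would combine the Sobolev embedding $\dot{H}^{1/6}(\rone)\hookrightarrow L^3(\rone)$ with the Gagliardo--Nirenberg interpolation $\|u\|_{\dot{H}^{1/6}}\lesssim \|u\|^{5/6}\|u'\|^{1/6}$ to obtain $\int u^3 \leq C\|u\|^{5/2}\|u'\|^{1/2}$, and then use $\|u'\|^2\leq C(\|(D-1)u\|^2 + \|u\|^2)$ together with $\|u\|^2\leq M/\al$ (where $M := \|(D-1)u\|^2 + \al\|u\|^2$) to conclude $I_\al[u]\leq C\al^{-3/4}$ for $\al\geq 1$; hence $C_\al\to 0$.

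The main step is $\lim_{\al\to 0^+} C_\al = +\infty$, which I would prove by exhibiting a two-scale test function
\[
u_c(x) := 2\cos(x)\, A(\delta x) + c\,B(\delta x)
\]
with fixed real Schwartz functions $A, B$ obeying $\int A^2 B \neq 0$ (e.g.\ both Gaussian), and a scale parameter $\delta = \delta(c)$ to be chosen. The Fourier supports of the two summands are essentially disjoint (near $|\xi|=1$ and near $\xi=0$, respectively), and every genuinely oscillatory integral arising in $\int u_c^3$ is exponentially small in $1/\delta$ by the rapid Fourier decay of Schwartz envelopes. A direct computation should then yield
\[
\|u_c\|^2 \sim \delta^{-1}, \qquad \|(D-1)u_c\|^2 \sim \delta + c^2\delta^{-1}, \qquad \int u_c^3 \sim c\delta^{-1}.
\]
Balancing $\delta\sim c$ minimizes $\|(D-1)u_c\|^2$; then optimizing $c = c(\al) \sim \sqrt{\al}$ in $I_\al[u_c]$ produces $I_\al[u_{c(\al)}]\sim \al^{-1/4}\to+\infty$, forcing $C_\al\to+\infty$. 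The main technical obstacle is the precise verification of these three scaling asymptotics, particularly the exponential smallness of the oscillatory contributions to $\int u_c^3$ and the control of the cross terms between the high- and low-frequency summands when computing $\|(D-1)u_c\|^2$ and $\|u_c\|^2$.
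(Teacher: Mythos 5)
Your proposal is correct, and while the overall architecture (monotonicity via the maximizer, a two-scale test function for $\al\to 0+$, an interpolation bound with explicit $\al$-dependence for $\al\to+\infty$) matches the paper, two of your steps take genuinely different routes. For continuity, the paper argues via the maximizers $\vp_{\al_n}$, expanding $C_{\al_n}$ around $C_\al$ to show the left and right limits both equal $C_\al$; your sandwich $C_{\al_2}\leq C_{\al_1}\leq \frac{\al_2}{\al_1}C_{\al_2}$ is more elementary and in fact yields local Lipschitz continuity on compact subsets of $(0,\infty)$, with no appeal to the existence of maximizers at all. The one point you should make explicit is that the pointwise inequality $I_{\al_1}[u]\leq \frac{\al_2}{\al_1}I_{\al_2}[u]$ only holds for $u$ with $\int u^3\geq 0$ (the inequality between the denominators reverses direction in the quotient when the numerator is negative); since $C_\al>0$, restricting the suprema to such $u$ is harmless, but it needs saying. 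For $\al\to+\infty$, you work on the physical side with $\int u^3\lesssim \|u\|^{5/2}\|u'\|^{1/2}$, whereas the paper works on the Fourier side via Young's inequality $|\xi|^{1/2}\leq C\al^{-3/4}((|\xi|-1)^2+\al)$; both give the same rate $C_\al\lesssim \al^{-3/4}$. For $\al\to 0+$ your construction is essentially the paper's: the paper's test function $\widehat{u}_\eps(\xi)=\eta(\frac{\xi-1}{\eps})+\eta(\frac{\xi+1}{\eps})+\eps\,\eta(\frac{\xi}{\eps})$ is exactly a carrier $2\cos x$ modulated by a slow envelope plus a small low-frequency bump, and the resonant term $\cos^2x\cdot(\text{low frequency})$ is what survives in $\int u^3$ in both arguments. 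The paper fixes $\eps$ and lets $\al\to 0$ to get $\lim_{\al\to 0+}C_\al\geq C\eps^{-1/2}$ for every $\eps$, while you couple the scales to $\al$ and obtain the quantitative rate $C_\al\gtrsim \al^{-1/4}$ (which is also implicit in the paper's computation upon taking $\eps\sim\sqrt{\al}$). The technical obstacle you flag—exponential smallness of the oscillatory terms—is handled in the paper by choosing the envelope with compactly supported Fourier transform, so that the non-resonant integrals vanish identically for small $\eps$; with Schwartz envelopes you get rapid decay instead, which is equally sufficient. Your claimed asymptotics $\|u_c\|^2\sim\delta^{-1}$, $\|(D-1)u_c\|^2\sim \delta+c^2\delta^{-1}$, $\int u_c^3\sim c\delta^{-1}$ are correct (the last provided $\int A^2B>0$ and $c$ is small so the $c^3\int B^3$ term cannot cancel the linear one, which holds along $c\sim\sqrt{\al}\to 0$).
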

\begin{proof}
	First, since for $0<\al_1<\al_2$, we have $I_{\al_1}[u]\geq I_{\al_2}[u]$, we clearly have that $C_{\al_1}\geq C_{\al_2}$. Denoting the maximizer $\vp_\al$ for \eqref{35}, we have that 
	$$
	C_{\al_2}=\f{\int \vp_{\al_2}^3 }{\|\vp_{\al_2}\| (\|(D-1)\vp_{\al_2}\|^2+\al_2\|\vp_{\al_2}\|^2)}< \f{\int \vp_{\al_2}^3 }{\|\vp_{\al_2}\| (\|(D-1)\vp_{\al_2}\|^2+\al_1\|\vp_{\al_2}\|^2)}\leq C_{\al_1},
	$$
	thus establishing the strict decreasing. 
	
	Next, we tackle the continuity. As monotone function, we confirm the existence of left and right limits, and we have to rule out jumps. To that end, fix $\al>0$ and a sequence $\al_n\to \al-$. Consider the corresponding maximizers $\vp_{\al_n}$, with the extra stipulation that $\|\vp_{\al_n}\|=1$ (which is possible via a rescaling), and  $\|(D-1)\vp_{\al_n}\|^2+\al_n \|\vp_{\al_n}\|^2=\la_n\to \la$. Note $\la_n>\al_n$, and bounded away from zero. We have that 
	\begin{eqnarray*}
		C_{\al_n} &=& \f{\int \vp_{\al_n}^3 }{\|\vp_{\al_n}\| (\|(D-1)\vp_{\al_n}\|^2+\al_n\|\vp_{\al_n}\|^2)} = \f{\int \vp_{\al_n}^3 }{\|\vp_{\al_n}\| (\|(D-1)\vp_{\al_n}\|^2+\al\|\vp_{\al_n}\|^2)+(\al_n-\al)}=\\
		&=& \f{\int \vp_{\al_n}^3 }{\|\vp_{\al_n}\| (\|(D-1)\vp_{\al_n}\|^2+\al\|\vp_{\al_n}\|^2)}+(\al-\al_n) \f{\int \vp_{\al_n}^3 }{\la_n^2}+O((\al-\al_n)^2)\leq C_{\al}+ D(\al-\al_n).
	\end{eqnarray*}
Taking $\lim_n$ on both sides leads to $C_{\al-}\leq C_\al$. But the function $\al\to C_\al$ is decreasing, whence we get the opposite inequality, so $C_{\al-}=C_\al$ for all $\al>0$. 

Regarding the right limits, take $\al_n\to \al+$. We have 
$$
C_{\al+}=\lim_n C_{\al_n}\geq \lim_n \f{\int \vp_{\al}^3 }{\|\vp_{\al}\| (\|(D-1)\vp_{\al}\|^2+\al_n\|\vp_{\al}\|^2)}=\f{\int \vp_{\al}^3 }{\|\vp_{\al}\| (\|(D-1)\vp_{\al}\|^2+\al \|\vp_{\al}\|^2)}=C_\al.
$$
Again, by the decreasing  of the map, we gain the opposite inequality $C_{\al+}\leq C_\al$. Altogther, 
$$
C_{\al-}=C_\al=C_{\al+},
$$
which is the continuity. 

We now turn our attention to the asymptotics \eqref{320}. Tothis end, introduce a cut-off function $\eta\in C^\infty_0(\rone)$, so that $0\leq \eta<1, \eta(x)=1, |x|<1, \eta(x)=0, |x|>2$. Fix a small $0<\epsilon<<1$ and  consider a test function  
\begin{equation}
	\label{d:18}
	\widehat{u}_\eps(\xi)=
	\eta\left(\f{\xi-1}{\eps}\right)+\eta\left(\f{\xi+1}{\eps}\right)+\eps \eta\left(\f{\xi}{\eps}\right).
\end{equation}
 An elementary calculation (note the disjoint support of the three  pieces, defining $\hat{u}_\eps$) shows that 
$$
	\|(D-1) u_\eps\|^2=\int (|\xi|-1)^2 
	\left(\eta^2\left(\f{\xi-1}{\eps}\right) +\eta^2\left(\f{\xi+1}{\eps}\right)+\eps^2 \eta^2\left(\f{\xi}{\eps}\right)\right)d\xi \sim \eps^3, \ \ 
	 \|u_\eps\|\sim \sqrt{\eps}. 
$$
On the other hand, using elementary properties of the Fourier transform 
$$
u_\eps(x)=\eps \check{\eta}(\eps x)(e^{ i x}+e^{- i x}+\eps).
$$
It follows that $u^3_\eps(x)=\eps^3 \check{\eta}^3(\eps x)
(e^{ i x}+e^{- i x}+\eps)^3$.  Expanding the cubic term yields 
$$
(e^{ i x}+e^{-i x}+\eps)^3=6 \eps + \eps^3+ A_\pm  e^{\pm   i x}+B_\pm e^{\pm 2 i x}+C_\pm  e^{\pm 3 i x}, 
$$
for some constants $A_\pm, B_\pm, C_\pm$. Thus,  we obtained  the relation 
$$
\int u^3_\eps =6 \eps^4 \int \check{\eta}^3(\eps x) +\eps^6 \int \check{\eta}^3(\eps x) + A_\pm \int e^{\pm i x}\check{\eta}^3(\eps x) +B_\pm \int e^{\pm 2 i x}\check{\eta}^3(\eps x) +C_\pm \int e^{\pm 3 i x}\check{\eta}^3(\eps x).
$$
Note that the last three terms are of order $O(\eps^N)$ for all integer $N$, whence 
$$
\int u^3_\eps(x)=6 \eps^4 \int \check{\eta}^3(\eps x)dx+O(\eps^5)=
6 \eps^3 \int \check{\eta}^3(x)dx+O(\eps^5).
$$
By Plancherel's $ \int \check{\eta}^3(x)dx=\int \eta(x) (\eta*\eta)(x) dx>0$. Thus, for all $\al>0$ and small enough $\eps$, we have 
$$
C_\al\geq I_\al[u_\eps]\geq C \f{\eps^3}{\sqrt{\eps}(\eps^3+\al \eps)}.
$$
Thus, 
$$
\lim_{\al\to 0+} C_\al\geq \f{C}{\sqrt{\eps}},
$$
for all $\eps>0$.  Thus, we infer the asymptotic $\lim_{\al\to 0+} C_\al=+\infty$.  

Regarding the claim $\lim_{\al\to +\infty} C_\al=0$ in \eqref{320}, we provide a direct, Fourier proof  of the estimate \eqref{30}, which provides a better tracking of the constants. Indeed, by Sobolev embedding, H\"older's and Plancherel's, we have 
\begin{eqnarray*}
	\int \vp^3(x) dx \leq \|\vp\|_{L^3}^3\leq C \|D^{\f{1}{6}}\vp\|^3\sim C\left(\int |\xi|^{\f{1}{3}} |\hat{\vp}(\xi)|^2 d\xi\right)^{\f{3}{2}}\leq C \|\vp\|  \left(\int |\xi|^{\f{1}{2}} |\hat{\vp}(\xi)|^2 d\xi \right).
\end{eqnarray*}
In addition,  we have by Young's   inequality, 
$
 |\xi|^{\f{1}{2}} \leq C\al^{-3/4} ((|\xi|-1)^2+\al), 
$
for  $\al\geq 1$ and some absolute constant $C$.  Thus, 
$$
\int \vp^3(x) dx  \leq C \al^{-3/4} \|\vp\|  \left(\int  ((|\xi|-1)^2+\al)  |\hat{\vp}(\xi)|^2 d\xi \right)=C \al^{-3/4} \|\vp\|  (\|(D-1)\vp\|^2+\al \|\vp\|^2).
$$
This shows that $C_\al \leq C\al^{-3/4}$ for large $\al$, whence $\lim_{\al\to +\infty} C_\al=0$. 
\end{proof}

   \subsection{Stability of the waves}
   Now that we have constructed the wave $\vp$, we can derive the several  useful spectral properties. We collect these in the following proposition. 
   \begin{proposition}
   	\label{prop:20} 
   	For the maximizer  $\vp$ of the interpolation inequality, we have the following properties 
   	\begin{itemize}
   		\item it satisfies the Euler-Lagrange equation 
   		\begin{equation}
   			\label{115} 
   			(D-1)^2 \vp+\left(\al+\f{\|(D-1)\vp\|^2+\al \|\vp\|^2}{2\|\vp\|^2}\right) \vp - \f{3}{2C_\al\|\vp\|} \vp^2=0.
   		\end{equation}
   	
   	\item The linearized operator 
   	$$
   	\cm_+:=(D-1)^2  + \om_\al - \f{3}{C_\al\|\vp\|}\vp,  \ \ 
   	\om_\al=\al+\f{\|(D-1)\vp\|^2+\al \|\vp\|^2}{2\|\vp\|^2} 
   	$$
   	has the property $\cm_+|_{\{\vp\}^\perp}\geq 0$. 
   	\end{itemize}
   \end{proposition}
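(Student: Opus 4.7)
The plan is to obtain the Euler--Lagrange equation \eqref{115} by computing the first variation of $I_\al$ at $\vp$, and then to deduce the operator inequality $\cm_+|_{\{\vp\}^\perp} \geq 0$ from the second-order maximum condition.

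For the first bullet, I would impose $\f{d}{dt} I_\al[\vp + th]|_{t=0} = 0$ for arbitrary $h\in H^1(\rone)$. Writing $I_\al = F/G$ with $F[u] := \int u^3$ and $G[u] := \|u\|(\|(D-1)u\|^2 + \al\|u\|^2)$, the quotient rule produces a weak equation of the form $\dpr{F'[\vp]}{h}\cdot G[\vp] = F[\vp]\cdot \dpr{G'[\vp]}{h}$. Using self-adjointness of $D-1$ to convert $\dpr{(D-1)\vp}{(D-1)h}$ into $\dpr{(D-1)^2\vp}{h}$, then dividing both sides by the common factor $\|\vp\|\cdot(\|(D-1)\vp\|^2+\al\|\vp\|^2)^2$ and substituting $F[\vp]/(\|\vp\|\cdot(\|(D-1)\vp\|^2+\al\|\vp\|^2)) = C_\al$, one finds that the coefficient of $\vp$ collects to precisely $\om_\al = \al + \f{\|(D-1)\vp\|^2+\al\|\vp\|^2}{2\|\vp\|^2}$ and the coefficient of $\vp^2$ collects to $\f{3}{2C_\al\|\vp\|}$, which is exactly \eqref{115}.

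For the second bullet, I would use the reformulation: $I_\al[u]\leq C_\al$ with equality at $\vp$ says precisely that $\vp$ is an \emph{unconstrained} global maximum of $\cf[u] := F[u] - C_\al G[u]$ on $H^1(\rone)$. Fix $h\in H^1$ with $\dpr{h}{\vp}=0$ (precisely the subspace on which positivity is asserted) and Taylor expand $\cf[\vp+th]$ in $t$. The orthogonality is what makes the expansion clean: $\|\vp+th\|^2 = \|\vp\|^2+t^2\|h\|^2$ is exactly quadratic in $t$, so $\|\vp+th\| = \|\vp\| + \f{t^2\|h\|^2}{2\|\vp\|}+O(t^4)$, with no linear contribution. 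The $O(1)$ pieces of $F$ and $C_\al G$ cancel by the very definition of $C_\al$, and the $O(t)$ pieces cancel thanks to \eqref{115}, which under $\dpr{\vp}{h}=0$ gives $\dpr{(D-1)^2\vp}{h} = \f{3}{2C_\al\|\vp\|}\int\vp^2 h$. The maximality of $\vp$ then forces the $O(t^2)$ coefficient to be $\leq 0$; after dividing by $C_\al\|\vp\|$ this inequality reads exactly $\dpr{\cm_+ h}{h}\geq 0$.

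The main obstacle is the algebraic bookkeeping in the $O(t^2)$ coefficient of $G[\vp+th]$, which receives two distinct contributions: the ``intrinsic'' quadratic piece $\|\vp\|(\|(D-1)h\|^2 + \al\|h\|^2)$ from expanding the parenthesized term, and a cross piece $\f{A\|h\|^2}{2\|\vp\|}$ coming from the $t^2$ correction to $\|\vp+th\|$ multiplied against $A := \|(D-1)\vp\|^2 + \al\|\vp\|^2$. Once these are combined, the effective coefficient in front of $\|h\|^2$ is $\al + \f{A}{2\|\vp\|^2}$, which is exactly $\om_\al$ — and this is the sole reason the final identification with $\cm_+$ closes up cleanly. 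The non-local operator $D$ causes no genuine difficulty here because $D-1$ enters only quadratically and is self-adjoint.
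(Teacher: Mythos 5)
Your proposal is correct and follows essentially the same route as the paper: the authors work with the single functional $g(\eps)=\f{\|\vp+\eps h\|}{\|\vp\|}\left(\|(D-1)(\vp+\eps h)\|^2+\al\|\vp+\eps h\|^2\right)-\f{1}{C_\al\|\vp\|}\int(\vp+\eps h)^3\,dx$, which is (up to the positive factor $-C_\al\|\vp\|$) your $\cf[\vp+\eps h]$, and extract \eqref{115} from $g'(0)=0$ and the positivity of $\cm_+$ on $\{\vp\}^\perp$ from $g''(0)\geq 0$ with $h\perp\vp$. Your bookkeeping of the $O(t^2)$ terms, in particular the cross contribution $\f{A\|h\|^2}{2\|\vp\|}$ that produces the $\f{A}{2\|\vp\|^2}$ part of $\om_\al$, is exactly the computation the paper leaves implicit.
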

   \begin{proof}
   	Since $\vp$ is a maximizer of \eqref{22}, fix a test function $h\in H^2(\rone)$. We have that 
   	$$
   	g(\eps):= \f{\|\vp+\eps h\|}{\|\vp\|}(\|(D-1)(\vp+\eps h)\|^2+\al \|\vp+\eps h\|^2) 
   	- \f{1}{C_\al \|\vp\|} \int (\vp+\eps h)^3 dx
   	$$
   	has an absolute  minimum $g(0)=0$. Thus, $g'(0)=0$, whence we derive the Euler-Lagrange equation \eqref{115}. 
   	Additionally, we have that $g''(0)\geq 0$. In order to simplify the calculations (and in view of our goal of showing that  $\cm_+|_{\{\vp\}^\perp}\geq 0$), select in addition the increment $h$, so that  $h\perp \vp$. We obtain the quadratic form in $h$
   	$$
   	2 g''(0)= \dpr{((D-1)^2+\om - \f{3}{C_\al \|\vp\|} \vp)h}{h}.
   	$$
   	In other words, $\dpr{\cm_+ h}{h}\geq 0$ for all $h: h\perp \vp$, which is the desired conclusion.

   \end{proof}
   
As we have mentioned above, the functional $I[u]$ is homogeneous of degree zero, and as a consequence, it has many maximizers, all  in the form $a\vp$ for arbitrary $a\neq 0$.   In particular, the transformation 
\begin{equation}
	\label{298} 
	\phi:=\f{3}{2C_\al \|\vp\|} \vp, \|\phi_\al\|=\f{3}{2C_\al}
\end{equation}
   yields another maximizer of \eqref{22}. Plugging in this formula in the Euler-Lagrange equation \eqref{115}, results in  the relation 
  \begin{equation}
  	\label{300} 
  	   	(D-1)^2 \phi +\om_\al  \phi - \phi^2=0,
  \end{equation}
which is of course \eqref{14}, with the specific dependence on $\al$. 
Note that $\phi$ also depends on $\al$ and moreover,  $\|\phi\|=\f{3}{2 C_\al}$. Based on the way $\phi$ is obtained, namely by rescaling  of $\vp$, we may infer the property $\cl_+|_{\{\phi\}^\perp}\geq 0$ for the corresponding linearized operator 
\begin{equation}
	\label{310} 
	\cl_+=(D-1)^2  + \om_\al - 2\phi,
\end{equation}
based on the same property for the related  operator $\cm_+$.

We are now ready for the stability statement of the traveling wave $\phi$. 
\begin{proposition}
	\label{prop:40} 
	Let $\al>0$ and $\phi_\al$ be a solution of \eqref{300}, obtained as a rescale of the maximizer $\vp$ (through \eqref{298}), which was constructed in Proposition \ref{prop:10}. Then, $\phi$ is a spectrally stable traveling wave solution, in the sense of Definition \ref{defi:10}. 
\end{proposition}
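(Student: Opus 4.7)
The plan is to reduce the result to a direct application of Corollary \ref{cor:10}, taking $\cl=\cl_+$ and $\psi=\phi$. Three hypotheses must be verified: $\cl_+$ is self-adjoint and maps real-valued functions to real-valued functions, the kernel condition $\cl_+[\phi']=0$ holds, and the coercivity $\cl_+|_{\{\phi\}^\perp}\geq 0$ is in force.

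The first two ingredients are essentially free. The operator $(D-1)^2$ is a real Fourier multiplier with symbol $(|\xi|-1)^2$, hence self-adjoint on $H^2(\rone)$ and preserving real-valuedness; the potential $\om_\al - 2\phi(x)$ is bounded, real, and smooth, so the sum $\cl_+$ is self-adjoint on $D(\cl_+)=H^2(\rone)$ and maps real to real. The kernel condition $\cl_+[\phi']=0$ is exactly identity \eqref{17}, obtained by differentiating the profile equation \eqref{300} in the spatial variable; since $\phi\in H^\infty(\rone)$ by Theorem \ref{theo:10}, we have $\phi'\in H^2(\rone)=D(\cl_+)$.

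The one substantive ingredient is the coercivity $\cl_+|_{\{\phi\}^\perp}\geq 0$, and this is inherited from Proposition \ref{prop:20}. That proposition delivers $\cm_+|_{\{\vp\}^\perp}\geq 0$ for the operator $\cm_+=(D-1)^2+\om_\al-\f{3}{C_\al\|\vp\|}\vp$ built from the maximizer $\vp$. The rescaling $\phi=\f{3}{2C_\al\|\vp\|}\vp$ in \eqref{298} gives $\f{3}{C_\al\|\vp\|}\vp=2\phi$, so $\cm_+$ and $\cl_+$ coincide as operators; since $\phi$ is a positive scalar multiple of $\vp$, we also have $\{\vp\}^\perp=\{\phi\}^\perp$ as subspaces of $L^2(\rone)$. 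Combining, $\cl_+|_{\{\phi\}^\perp}\geq 0$, as required.

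With the three hypotheses in hand, Corollary \ref{cor:10} yields $\si(\p_x\cl_+)\subset i\rone$, which is exactly the spectral stability asserted in Definition \ref{defi:10}. No further obstacle arises at this stage: the real variational work --- identifying the constrained lower bound of $\dpr{\cl_+ h}{h}$ over $h\perp \phi$ --- was already carried out upstream through the Lagrange multiplier computation for the maximizer $\vp$ in Proposition \ref{prop:20}, and the present task is simply to transport that statement to the rescaled wave $\phi$ via the identity \eqref{298}.
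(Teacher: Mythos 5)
Your proof is correct and follows essentially the same route as the paper: the paper's own argument is a one-line invocation of Corollary \ref{cor:10} together with the coercivity $\cl_+|_{\{\phi\}^\perp}\geq 0$ inherited from $\cm_+$ through the rescaling \eqref{298}, which is precisely what you verify. Your write-up merely makes explicit the routine hypotheses (self-adjointness, reality, $\cl_+[\phi']=0$) that the paper leaves implicit.
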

  \begin{proof}
  	For the proof, we simply invoke the property \eqref{310} just established for the linearized operator $\cl_+$ and Corollary \ref{cor:10}. 
  \end{proof}

   \subsection{Proof of Theorem \ref{theo:10}}
   
   We have all the ingredients for the proof of Theorem \ref{theo:10}. Indeed, the waves $\vp_\al, \al>0$ are constructed in Propostion \ref{prop:10}. By the rescaling \eqref{298}, we obtain the waves $\phi_\al$, which are actual traveling wave solutions of the Benjamin equation. By \eqref{298}, $\al\to \|\phi_\al\|=\f{3}{2C_\al}$, which is a continuous, strongly increasing function, according to Proposition \eqref{prop:35}. Moreover, 
   $$
   \lim_{\al\to 0+}\|\phi_\al\|=0,  \lim_{\al\to \infty}\|\phi_\al\|=+\infty. 
      $$
   Finally, by Proposition \ref{prop:40}, the steady solutions $\phi$ are spectrally stable solutions  of \eqref{b:10}. 
   
   \subsection{Proof of Proposition \ref{prop:100}}
   Clearly, due to the formula \eqref{c:10}, the mapping $\om\to \om_\al$ is $C^1$. Moreover, we can take a derivative with respect to the parameter $\al$ in the profile equation \eqref{14}. As a result, we obtain the formula
   \begin{equation}
   	\label{c:20} 
   	\cl_+[\p_\al \phi]=-\om'(\al) \phi_\al.
   \end{equation}
   So, either $\om'(\al)=0$ and we are done, or $\om'(\al)\neq 0$. In such  a case,  $\phi_\al\in Im[\cl_+]$, whence $\phi_\al\perp  \ker[\cl_+]$ and  so $\cl_+^{-1}\phi_\al$ is well-defined. Applying $\cl_+^{-1}$ in \eqref{c:20} yields 
   $$
   \p_\al \phi = -\om'(\al) \cl_+^{-1}[\phi].
   $$
   Taking dot product with $\phi$  results in 
   \begin{equation}
   	\label{c:30} 
   	  \f{1}{2} \p_\al \|\phi_\al\|^2=-\om'(\al)\dpr{\cl_+^{-1}\phi}{\phi}.
   \end{equation}
 Since by Theorem \ref{theo:10}, $\al\to \|\phi_\al\|$ is strictly increasing, it follows that the left-hand side of \eqref{c:30} is non-negative. Per the statement of Proposition \ref{prop:100},     suppose $\p_\al \|\phi_\al\|^2>0$. It will suffice to show 
 that $\dpr{\cl_+^{-1}\phi}{\phi}\leq 0$. This actually follows from a general lemma, which appears verbatim  in \cite{PS18}, but it has its origins in some  earlier results, \cite{Pel}. 
 \begin{lemma}[\cite{PS18}]
 	\label{le:93} 
 	Suppose that $\ch$ is a self-adjoint operator on a Hilbert space $X$, so that $\ch|_{\{\xi_0\}^\perp}\geq 0$. Next, assume $\xi_0\perp Ker[\ch]$, so that $\ch^{-1} \xi_0$ is well-defined. 
 	Finally, assume $\dpr{\ch \xi_0}{\xi_0}\leq 0$. Then
 	$$
 	\dpr{\ch^{-1} \xi_0}{\xi_0}\leq 0.
 	$$ 
 \end{lemma}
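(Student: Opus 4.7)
The plan is to argue by contradiction via a single well-chosen test vector, exploiting the only piece of positivity available, namely $\dpr{\ch u}{u}\geq 0$ for every $u\perp \xi_0$. We may assume $\xi_0\neq 0$ throughout, as otherwise the conclusion is vacuous.

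The key idea is to build a test vector in $\{\xi_0\}^\perp$ out of the pair $(\xi_0,\ch^{-1}\xi_0)$ by choosing coefficients that kill the inner product with $\xi_0$. Concretely, assume for contradiction that $\dpr{\ch^{-1}\xi_0}{\xi_0}>0$ and set
$$
u:=\xi_0-t\,\ch^{-1}\xi_0,\qquad t:=\frac{\|\xi_0\|^2}{\dpr{\ch^{-1}\xi_0}{\xi_0}}.
$$
Under the contradiction hypothesis $t$ is well defined, and by construction $\dpr{u}{\xi_0}=0$, so the standing assumption $\ch|_{\{\xi_0\}^\perp}\geq 0$ gives $\dpr{\ch u}{u}\geq 0$.

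The next step is a short algebraic expansion of $\dpr{\ch u}{u}$, using the self-adjointness of $\ch$ together with the basic identity $\ch(\ch^{-1}\xi_0)=\xi_0$ (legitimate because $\xi_0\perp Ker[\ch]$ places $\ch^{-1}\xi_0$ in the domain where $\ch$ is invertible). The two cross terms each equal $-t\|\xi_0\|^2$, and after substituting the value of $t$ one obtains the clean identity
$$
\dpr{\ch u}{u}=\dpr{\ch\xi_0}{\xi_0}-2t\|\xi_0\|^2+t^2\dpr{\ch^{-1}\xi_0}{\xi_0}=\dpr{\ch\xi_0}{\xi_0}-\frac{\|\xi_0\|^4}{\dpr{\ch^{-1}\xi_0}{\xi_0}}.
$$
Combining with $\dpr{\ch u}{u}\geq 0$ yields the strict lower bound $\dpr{\ch\xi_0}{\xi_0}\geq \|\xi_0\|^4/\dpr{\ch^{-1}\xi_0}{\xi_0}>0$, in direct conflict with the hypothesis $\dpr{\ch\xi_0}{\xi_0}\leq 0$. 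This closes the contradiction and forces $\dpr{\ch^{-1}\xi_0}{\xi_0}\leq 0$.

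The argument is essentially a one-line computation once the ansatz for $u$ is in hand, so the main obstacle is really the preliminary one: recognizing that the test vector $\xi_0-t\,\ch^{-1}\xi_0$ is the right object, and verifying that the manipulations $\ch\ch^{-1}\xi_0=\xi_0$ and $\dpr{\ch\xi_0}{\ch^{-1}\xi_0}=\|\xi_0\|^2$ are justified. The latter follows from the self-adjoint functional calculus on $(Ker[\ch])^\perp$, where $\ch$ is injective; no additional spectral-gap hypothesis is required. Notice also that the argument never needs to identify how many negative eigenvalues $\ch$ has — the single inequality $\ch|_{\{\xi_0\}^\perp}\geq 0$ is exactly enough to drive the contradiction.
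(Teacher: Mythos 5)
Your proof is correct and is essentially the paper's own argument in disguise: after normalizing $\|\xi_0\|=1$, your test vector $u=\xi_0-t\,\ch^{-1}\xi_0$ is just $-t$ times the paper's $\eta=\ch^{-1}\xi_0-\dpr{\ch^{-1}\xi_0}{\xi_0}\,\xi_0$, and both proofs evaluate the quadratic form $\dpr{\ch\,\cdot}{\cdot}$ on this same element of $\{\xi_0\}^\perp$. The only cosmetic difference is that the paper argues directly (its $\eta$ is defined even when $\dpr{\ch^{-1}\xi_0}{\xi_0}=0$, so no contradiction setup is needed), whereas you phrase it as a contradiction to make the coefficient $t$ well defined.
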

 We provide a short proof of this result in the Appendix for completeness, but let us apply it in our situation. Indeed, 
 setting $\xi_0:=\phi$, and since we know $\phi\perp \ker[\cl_+]$, we need to check the sign of $\dpr{\cl_+ \phi}{\phi}$. By direct calculation 
 $$
 \dpr{\cl_+ \phi}{\phi}=\dpr{((D-1)^2+\om - 2\phi)\phi}{\phi}=-\int\phi^2<0,
 $$
   as required. Thus, Lemma \ref{le:93} applies and 
   $\dpr{\cl_+^{-1} \phi}{\phi}\leq 0$. Since $\p_\al \|\phi_\al\|^2>0$ by assumption, it follows that in fact $\dpr{\cl_+^{-1} \phi}{\phi}<0$, whence 
   $$
   \om'(\al)=-\f{\p_\al \|\phi_\al\|^2}{2\dpr{\cl_+^{-1} \phi}{\phi}}>0.
   $$
   
 
   \section{Existence and stability of the waves  in the case of general power nonlinearity} 
   In this section, we tackle the problem of existence of the waves for the problem with general power nonlinearity, i.e. \eqref{b:50}. Incidentally, there are two methods for constructing such solutions, which work in different ranges. We start with the one which is a direct generalization of the approach in Section \ref{sec:3}. 
   \subsection{Waves constructed as maximizers of an interpolation inequality}
   
    \label{sec:4.1}
   Clearly, as \eqref{30} holds true for $p\in (2,6]$, there is a maximal (exact) constant for it, let us denote it by $C_{\al, p}$. That is, 
   $$
   C_{\al,p}= \sup_{f\neq 0: f\in H^1} 
   \f{\int_{-\infty}^\infty |f(x)|^p dx}{\|f\|^{p-2}  \left(\|(D-1) f\|^2+ \al \|f\|^2\right)}, 2<p\leq 6, \al>0.
   $$
   \begin{proposition}
   	\label{prop:80} 
   	Let $2<p\leq 6$.  For each $\al>0$, there exists a maximizer for \eqref{30}. That is, there exists a  function $\vp=\vp_{p,\al} \in H^1(\rone)$, so that 
   	$$
   	C_{\al,p}= \f{\int_{-\infty}^\infty |\vp(x)|^p dx}{\|\vp\|^{p-2} \left(\|(D-1) \vp\|^2+ \al \|\vp\|^2\right)}.
   	$$
   	In addition, the function satisfies the Euler-Lagrange equation 
   		\begin{equation}
   		\label{215} 
   		(D-1)^2 \vp+\left(\al+\f{p-2}{2\|\vp\|^2}(\|(D-1)\vp\|^2+\al \|\vp\|^2)\right) \vp -\f{p}{2 C_{\al,p}\|\vp\|^{p-2}} |\vp|^{p-2}\vp=0.
   	\end{equation}
   The linearized operator 
   $$
   \cm_+=(D-1)^2  + \om_{\al,p} - \f{p(p-1)}{2 C_{\al,p}\|\vp\|^{p-2}} |\vp|^{p-2},  \ \om_{\al,p}=\f{p \al}{2} +\f{(p-2) }{2} \f{\|(D-1)\vp\|^2}{\|\vp\|^2} 
   $$
   has the property $\cm_+|_{\{\vp\}^\perp}\geq 0$. 
   \end{proposition}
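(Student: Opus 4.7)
The plan is to follow the roadmap developed in Proposition \ref{prop:10} for the quadratic case $p=3$, adapted to the general sub--critical power $p \in (2,6]$, and then obtain the Euler--Lagrange equation and the spectral property by two successive differentiations of a suitable variational functional along normal perturbations. The functional to maximize is
\[
I_{\al,p}[u] = \f{\int_{-\infty}^\infty |u(x)|^p\,dx}{\|u\|^{p-2}\bigl(\|(D-1)u\|^2 + \al \|u\|^2\bigr)},
\]
which is homogeneous of degree zero. First I would take a maximizing sequence, rescale so that $\|f_n\| = 1$ and $\|(D-1)f_n\|^2 + \al \|f_n\|^2 = \la_n \to \la > 0$, and verify as in the $p=3$ case (using Lemma \ref{le:10} with the exponent pair $(1-2/p, 2/p)$) that the scenario $\|f_n\| \to 0$ is contradictory with $C_{\al,p} > 0$. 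Thus $\int |f_n|^p \to C_{\al,p}\la$ and one then applies Theorem \ref{theo:102} to the $L^1_+$ densities $g_n := |(D-1)f_n|^2 + \al f_n^2$.

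The heart of the argument is to rule out vanishing and dichotomy. Vanishing is refuted essentially verbatim from the $p=3$ proof: one slices $\mathbb R$ into unit intervals, localizes via a cut-off $\eta$, applies \eqref{30} to each $f_{n_k}\eta(j-\cdot)$ and sums in $j$; the localized $L^2$ norms become uniformly small, and orthogonality of the $H^1$ pieces yields $\int |f_{n_k}|^p \to 0$, contradicting $\int |f_{n_k}|^p \to C_{\al,p}\la$. For dichotomy, I would use the smooth cutoffs $\chi_{n,\pm}$ of radius $R_n$, the Kato--Ponce commutator bound \eqref{43} to absorb all $[D,\chi_{n,\pm}]$ errors into $O(R_n^{-1})$ terms, and obtain $\int |f_n|^p = \int |f_{n,-}|^p + \int |f_{n,+}|^p + O(\eps + R_n^{-1})$, together with the mass relations $\lim_n \|f_{n,\pm}\| = a_\pm$, $a_-^2 + a_+^2 \leq 1$. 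Applying \eqref{30} to each piece gives
\[
C_{\al,p}\la \leq C_{\al,p}\bigl(a_-^{p-2}(\la-\mu) + a_+^{p-2}\mu\bigr) + C\eps,
\]
and the contradiction comes from the strict inequality $a_-^{p-2}(\la-\mu) + a_+^{p-2}\mu < \la$ for $\mu \in (0,\la)$ under the constraint $a_-^2 + a_+^2 \leq 1$. This step is where the quadratic proof needs the only essentially new ingredient: for $p \in (2,4)$ a H\"older pairing with exponents $2/(p-2)$ and $2/(4-p)$ yields
\[
a_-^{p-2}(\la-\mu) + a_+^{p-2}\mu \leq (a_-^2+a_+^2)^{(p-2)/2}\bigl((\la-\mu)^{2/(4-p)} + \mu^{2/(4-p)}\bigr)^{(4-p)/2} < \la,
\]
while for $p \in [4,6]$ one first bounds $a_\pm^{p-2} \leq a_\pm^2 \cdot \max(a_-,a_+)^{p-4}$ and then invokes the Cauchy--Schwarz bound $((\la-\mu)^2 + \mu^2)^{1/2} < \la$. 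I expect this case analysis on $p$ to be the main (but routine) obstacle.

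Once dichotomy and vanishing are excluded, Theorem \ref{theo:102} produces a translated subsequence that is tight, hence strongly $H^1$-precompact; by the Sobolev embedding $H^1(\rone) \hookrightarrow L^p(\rone)$ for $p \in (2,6]$ the limit $\vp \in H^1(\rone)$ achieves the supremum. To derive \eqref{215} I would now copy the $p=3$ argument: for $h \in H^2(\rone)$, introduce
\[
g(\eps) = \f{\|\vp + \eps h\|^{p-2}}{\|\vp\|^{p-2}}\bigl(\|(D-1)(\vp+\eps h)\|^2 + \al\|\vp+\eps h\|^2\bigr) - \f{1}{C_{\al,p}\|\vp\|^{p-2}}\int |\vp + \eps h|^p\,dx,
\]
which satisfies $g(\eps) \geq 0$ and $g(0) = 0$. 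The condition $g'(0) = 0$ for all $h$ yields, after collecting terms,
\[
2\bigl((D-1)^2 + \al\bigr)\vp + \f{(p-2)\bigl(\|(D-1)\vp\|^2 + \al\|\vp\|^2\bigr)}{\|\vp\|^2}\vp - \f{p}{C_{\al,p}\|\vp\|^{p-2}} |\vp|^{p-2}\vp = 0,
\]
which is exactly \eqref{215}.

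Finally, restricting the test directions to $h \perp \vp$, the $O(\eps^2)$ terms involving $\dpr{\vp}{h}$ collapse; the remaining quadratic form $2 g''(0) \geq 0$ reads
\[
\dpr{\cm_+ h}{h} \geq 0 \quad \text{for all } h \perp \vp,
\]
with $\cm_+$ and $\om_{\al,p}$ precisely as stated in the proposition. This establishes $\cm_+|_{\{\vp\}^\perp} \geq 0$ and completes the proof.
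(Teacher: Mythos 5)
Your proposal is correct and follows essentially the same route as the paper: Lions concentration--compactness applied to $g_n=|(D-1)f_n|^2+\al f_n^2$, refutation of vanishing and dichotomy exactly as in the quadratic case, and the Euler--Lagrange equation plus $\cm_+|_{\{\vp\}^\perp}\geq 0$ from $g'(0)=0$ and $g''(0)\geq 0$ with $h\perp\vp$. The only divergence is the strict inequality $(\la-\mu)a_-^{p-2}+\mu a_+^{p-2}<\la$, where your H\"older/Cauchy--Schwarz case split on $p$ works but is more elaborate than the paper's one-line observation that $a_\pm\in[0,1]$ forces $a_\pm^{p-2}\leq 1$, with strictness unless one of $a_\pm$ vanishes.
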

   \begin{proof}
   	The proof consists of mostly a reprise of Proposition \ref{prop:10} for the existence part, as well as Proposition \ref{prop:20} for the Euler-Lagrange equation and the spectral properties of the linearized operator. 
   	
   	Let us begin with the existence, and we use the same notations as in Proposition \ref{prop:10}. We prove in an identical manner that $C_{\al, p}>0$ as well as the statement that any maximizing sequence for \eqref{30}, with the property $\|(D-1) f_n\|^2+\al \|f_n\|^2=1$,  cannot have $\lim_n \|f_n\|=0$. This leads to the assumption \eqref{45} for a maximizing sequence 
   	$\{f_n\}\subset H^1$. So, 
   	\begin{equation}
   		\label{500}
   		\lim_n 	\int |f_n(x)|^p dx = C_{\al,p} \la.
   	\end{equation}
   We then apply the Lions's compensated compactness result for 
   $
   g_n=|(D-1)f_n|^2+\al f_n^2
   $
    whence \eqref{78}. Next, we proceed to refute the dichotomy and vanishing for $g_n$. 
    \subsubsection{Splitting does not hold}
    To this end, we may again assume without loss of generality, 
    that $y_n=0$ in the original splitting alternative. Next, we  write \eqref{40} and \eqref{48} for some $\mu\in (0, \la)$, and introduce the same functions $\ch_\pm$, and analogously $\chi_{n, \pm}(x)=\chi_\pm(x/R_n)$. Again\footnote{There is no difference in the object considered herein, as this is all about the functions $g_n$, which are defined the same as in Section \ref{sec:3}} multiplying \eqref{40} by $\chi_{n, +}$ results in \eqref{50}, whence \eqref{55} and \eqref{60}. 
    
    Next, we estimate the value of $\int |f_n|^p$, which is a bit different than Section \ref{sec:3}. We have 
    \begin{eqnarray*}
    		\int |f_n|^p &=&  \int  |f_n\chi_{n,+}|^p+ \int  |f_n\chi_{n,-}|^p+\int |f_n|^p(1-\chi_{n,+}^p-\chi_{n,-}^p) dx = \\
    		&=& \int  |f_{n,+}|^p+\int  |f_{n,-}|^p+\int |f_n|^p\zeta_n 
    \end{eqnarray*}
    where $\zeta_n=\zeta(x/R_n)$, $\zeta:=1-\chi_{+}^p-\chi_{-}^p\in C^1(\rone)$, $supp\  \zeta\subset (-1,1)$. 
    
    Similar to \eqref{105}, since $\|f_n\|=1$, and up to taking subsequences, 
    we may assume that there are $a_\pm\geq 0$, so that 
    \begin{equation}
    	\label{605} 
    	\lim_n \|f_n\|=a_\pm: a_+^2+a_-^2\leq 1.
    \end{equation}
    Applying \eqref{30} for $f_{n,\pm}$ (with the exact constant $C_{\al,p}$ as defined above) and the bounds \eqref{55} and \eqref{60},  we obtain 
    \begin{eqnarray*}
    & & 	\int |f_{n,-}|^p+ \int |f_{n,+}|^p \leq \\
    & \leq &  C_{\al,p}(\|f_{n,-}\|^{p-2}(\|(D-1)f_{n,-}\|^2+\al \|f_{n,-}\|^2)+\|f_{n,+}\|^{p-2} 
    	(\|(D-1)f_{n,+}\|^2+\al \|f_{n,+}\|^2)\leq \\
    	&\leq & C_{\al,p} ((\la-\mu) \|f_{n,-}\|^{p-2}+ \mu \|f_{n,+}\|^{p-2})+ C(\epsilon+R_n^{-1}).
    \end{eqnarray*}
For the error term, $\int |f_n|^p\zeta_n$, we employ the H\"older's bounds 
\begin{eqnarray}
	\label{610}
& & 	\int |f_n|^p\zeta_n \leq \|f_n\zeta_n\|_{L^\infty} \int |f_n|^{p-1}, p\geq 3 \\
	\label{615}
& & 	\int |f_n|^p\zeta_n \leq \|f_n\zeta_n\|_{L^q} 
\|f_n\|^{p-1}=\|f_n\zeta_n\|_{L^q}, 2<p< 3, \f{1}{q}+\f{p-1}{2}=1.
\end{eqnarray}
In the case, $p\geq 3$, we can further control by Sobolev embedding and \eqref{45}, 
$$
 \int |f_n|^{p-1}\leq C\|f_n\|_{H^1}^{p-1}\sim (\|(D-1)f_n\|^2+\al \|f_n\|^2)^{\f{p-1}{2}}\leq C.
$$
In the case, $p\in (2,3)$, we apply Gagliardo-Nirenberg's, noting that  $q=\f{2}{3-p}>2$, as $p>2$. Specifically, 
$$
\|f_n\zeta_n\|_{L^q}\leq \|f_n\zeta_n\|_{L^\infty}^{1-\f{2}{q}} \|f_n\|_{L^2}^{\f{2}{q}}= \|f_n\zeta_n\|_{L^\infty}^{1-\f{2}{q}}. 
$$
Combining all these observations,   we see that in order to justify  the  error term 
\begin{equation}
	\label{630} 
	\lim_n 	\int |f_n|^p\zeta_n=0,
\end{equation}
it will suffice to show $\lim_n \|f_n\zeta_n\|_{L^\infty}=0$. This was however already established in Section \ref{sec:3} in \eqref{285} - in there, the function was taken slightly different (namely $\zeta=(1-\chi_+)\chi_-$), but the result holds because of the property $\zeta\in C^1, \ supp \ \zeta \subset (-1,1)$, which is valid here as well. 

 Putting it all together, we have after taking limits, 
\begin{eqnarray*}
C_{\al, p}\la &=& \lim_n \int |f_n|^p\leq C_{\al,p} ((\la-\mu) a_-^{p-2}+ \mu a_+^{p-2})+ C \epsilon+\limsup_n \int |f_n|^p\zeta_n= \\
&=& C_{\al,p} ((\la-\mu) a_-^{p-2}+ \mu a_+^{p-2})+ C \epsilon, 
\end{eqnarray*}
whence for all $\epsilon>0$, we must have the inequality
\begin{equation}
	\label{650} 
	C_{\al, p}\la \leq C_{\al,p} ((\la-\mu) a_-^{p-2}+ \mu a_+^{p-2})+ C \epsilon. 
\end{equation}
Recall that as a consequence of \eqref{605}, we have $a_\pm \in [0,1]$. Then, it is easy to see that for $\mu\in (0, \la)$,  
$$
(\la-\mu) a_-^{p-2}+ \mu a_+^{p-2}<\la.
$$
Indeed, for $a_-=0$, we use that $ \mu a_+^{p-2}\leq \mu<\la$, and similarly for $a_+=0$. When both $a_\pm>0$, we see that by \eqref{605}, $a_\pm<1$, whence $(\la-\mu) a_-^{p-2}+ \mu a_+^{p-2}<\la-\mu+\mu=\la$, for a strict inequality. 
All in all, we obtain a contradcition in \eqref{650}, the moment we select $\epsilon$, so that 
$$
C\epsilon= \f{C_{\al,p}}{2} [\la- ((\la-\mu) a_-^{p-2}+ \mu a_+^{p-2})].
$$
\subsubsection{Vanishing does not hold}
The proof here is identical as in Section \ref{sec:3}. Indeed, \eqref{110} follows as in there. Then, we apply \eqref{30} for $p\in (2,6]$ for the subsequence $f_{n_k}$ (defined in \eqref{110})
We have for each integer\footnote{Recall that the  cut-off function $\eta\in C^\infty_0(\rone)$ satisfies  $0\leq \eta<1, \eta(x)=1, |x|<1, \eta(x)=0, |x|>2$. }  $j$, 
\begin{eqnarray*}
 	\int_{j-1}^{j+1} f_{n_k}^p(x) dx  & \leq &  	\int  |f_{n_k}(x) \eta(j-x)|^p dx\leq \\
	&\leq & 
	C_\al \|f_{n_k}\eta(j-\cdot)\|^{p-2} (\|(D-1)f_{n_k}\eta(j-\cdot)\|^2+\al \|f_{n_k}\eta(j-\cdot)\|^2) \\
	&\leq &   C \|f_{n_k}\eta(j-\cdot)\|^{p-2}
	\| f_{n_k}\eta(j-\cdot)\|_{H^1}^2.
\end{eqnarray*}
Based on \eqref{110}, fixing an arbitrary $\eps$, we will be able to find $k_0$, so that $\|f_{n_k}\eta(j-\cdot)\|<\eps$, whence adding up the last estimate in integer $j$, we obtain 
$$
2\int_{-\infty}^{+\infty} |f_{n_k}|^p\leq C \epsilon^{p-2} \sum_{j=-\infty}^{+\infty} \| f_{n_k}\eta(j-\cdot)\|_{H^1}^2\leq C \epsilon^{p-2} \|f_{n_k}\|_{H^1}^2\leq C \epsilon^{p-2}.
$$
For small enough $\eps$, this contradicts $\lim_k \int_{-\infty}^{+\infty} |f_{n_k}|^p=C_{\al,p}\la$, so vaishing does not hold either. 

\subsubsection{The Euler-Lagrange equation and the second variation}
The proof here proceeds similarly to before. Fix a test function $h$ and  consider the scalar function  
$$
g(\eps):= \f{\|\vp+\eps h\|^{p-2}}{\|\vp\|^{p-2}}(\|(D-1)(\vp+\eps h)\|^2+\al \|\vp+\eps h\|^2) 
- \f{1}{C_{\al,p}\|\vp\|^{p-2}} \int |\vp+\eps h|^p dx.
$$
As $\vp$ is a minimizer, we have that $0$ is an absolute minimum for the function $g: g(0)=0$, and as a consequence $g'(0)=0$, while $g''(0)\geq 0$. Writing this out, and taking into account that this should hold for arbitrary test function $h$, we establish that $\vp$ is a weak solution of the  Euler-Lagrange equation\footnote{ and hence strong and in fact classical $C^\infty(\rone)$ solution by standard elliptic theory} \eqref{215}.  Furthermore, we take as before,  a test function $h: h\perp \vp$. This simplifies the expression for $g''(0)$ quite a bit, and as a corollary, we obtain the property $\cm_+|_{\{\vp\}^\perp}\geq 0$ for the second variation. 
   \end{proof}
   \subsubsection{The waves $\phi_{\al,p}$}
   Similar to the approach in Section \ref{sec:3}, specifically \eqref{298}, we define 
   \begin{equation}
   	\label{398} 
   	\phi_{\al,p}:=\left(\f{p}{2 C_{\al,p}\|\vp\|^{p-2}}\right)^{\f{1}{p-2}} \vp_{\al,p}
   \end{equation}
 Once again, 
   $$
 \|\phi_{\al,p}\|=\left(\f{p}{2 C_{\al,p}}\right)^{\f{1}{p-2}}
	$$
whence we conclude that the  function $\al\to \|\phi_{\al,p}\|$ is increasing, as long as we can establish $\al\to C_{\al,p}$ is 
decreasing. This is done with an identical argument  to Proposition \ref{prop:35}. In fact, one establishes all the  properties of the map $\al\to C_{\al,p}$, including continuity and the property \eqref{320}. Note that the same example as  in \eqref{d:18} provides the necessary ``almost'' maximizers, even though the power $p$ may not be an integer. This allows us to state the fnal result concerning $\phi_{\al,p}$ 
as follows.
\begin{proposition}
	\label{prop:65} 
	Let $p\in (2,6]$, $\al>0$ and the waves $\phi_{\al,p}$ are defined by a rescaling of $\vp_{\al,p}$ in \eqref{398}. Then, 
	\begin{itemize}
		\item $\al\to \|\phi_{\al,p}\|$ is a strictly increasing and continuing function, so that 
		$$
		\lim_{\al\to 0+} \|\phi_{\al,p}\|=0, \lim_{\al\to \infty} \|\phi_{\al,p}\|=\infty
		$$
		\item $\phi_{\al,p}$ is spectrally stable in the sense of Definition \ref{defi:10}. 
	\end{itemize}
\end{proposition}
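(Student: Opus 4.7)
The proof proposal follows the blueprint of Section \ref{sec:3} almost verbatim, with only routine modifications to accommodate the $p$-dependence. I split the argument into two parts.

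\textbf{Part 1: Properties of the map $\al\mapsto \|\phi_{\al,p}\|$.} By the rescaling formula just preceding the statement, $\|\phi_{\al,p}\|=(p/(2C_{\al,p}))^{1/(p-2)}$, so it suffices to show that $\al\mapsto C_{\al,p}$ is continuous, strictly decreasing on $(0,\infty)$, with $\lim_{\al\to 0+}C_{\al,p}=+\infty$ and $\lim_{\al\to\infty}C_{\al,p}=0$. The strict monotonicity is immediate from the variational definition: for $\al_1<\al_2$, plugging the maximizer $\vp_{\al_2,p}$ into the $\al_1$-Rayleigh quotient gives $C_{\al_1,p}>C_{\al_2,p}$. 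For continuity, I would mimic the argument in Proposition \ref{prop:35}: use the maximizer (normalized by $\|\vp_{\al_n,p}\|=1$) together with the identity $\|(D-1)\vp\|^2+\al_n\|\vp\|^2=\|(D-1)\vp\|^2+\al\|\vp\|^2+(\al_n-\al)$ to derive $C_{\al_n,p}\leq C_{\al,p}+O(\al-\al_n)$, which combined with monotonicity rules out jumps from both sides.

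\textbf{Part 2: Asymptotics as $\al\to 0+$ and $\al\to\infty$.} For the limit at $0$, insert the test function $u_\eps$ from \eqref{d:18} into the $p$-Rayleigh quotient. The computation in Proposition \ref{prop:35} already shows $\|(D-1)u_\eps\|^2\sim\eps^3$ and $\|u_\eps\|\sim\sqrt{\eps}$; for the numerator $\int|u_\eps|^p$ I would use $u_\eps(x)=\eps\check\eta(\eps x)(2\cos x+\eps)$, so $|u_\eps(x)|^p=\eps^p|\check\eta(\eps x)|^p|2\cos x+\eps|^p$, and average the periodic factor over $x$ to get $\int|u_\eps|^p\gtrsim \eps^{p-1}\int|\check\eta|^p$; this yields $C_{\al,p}\gtrsim\eps^{p-1}/(\eps^{(p-2)/2}(\eps^3+\al\eps))$, which blows up as $\al\to 0+$ for any fixed small $\eps$. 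For the limit at infinity, the Fourier/Young argument in Proposition \ref{prop:35} adapts directly: Sobolev embedding controls $\|\vp\|_{L^p}^p$ by $\|D^{s}\vp\|^p$ with $s=1/2-1/p$, then $|\xi|^{2s}\leq C\al^{-\theta}((|\xi|-1)^2+\al)$ for an explicit power $\theta>0$, giving $C_{\al,p}\leq C\al^{-\theta}\to 0$.

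\textbf{Part 3: Spectral stability.} This is the essentially automatic part. Using \eqref{398}, set $c:=(p/(2C_{\al,p}\|\vp\|^{p-2}))^{1/(p-2)}$ so that $\phi_{\al,p}=c\,\vp_{\al,p}$ and hence
\[
(p-1)|\phi_{\al,p}|^{p-2}=(p-1)c^{p-2}|\vp_{\al,p}|^{p-2}=\frac{p(p-1)}{2C_{\al,p}\|\vp_{\al,p}\|^{p-2}}|\vp_{\al,p}|^{p-2}.
\]
Thus the linearized operator $\cl_+=(D-1)^2+\om_{\al,p}-(p-1)|\phi_{\al,p}|^{p-2}$ coincides identically with the operator $\cm_+$ appearing in Proposition \ref{prop:80}. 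Since $\phi_{\al,p}$ and $\vp_{\al,p}$ are proportional, $\{\phi_{\al,p}\}^\perp=\{\vp_{\al,p}\}^\perp$, so $\cl_+|_{\{\phi_{\al,p}\}^\perp}\geq 0$. Differentiating the profile equation \eqref{b:50} in $x$ gives $\cl_+[\phi_{\al,p}']=0$, and $\phi_{\al,p}'\in L^2$ is smooth and real-valued. All three hypotheses of Corollary \ref{cor:10} (with $\psi=\phi_{\al,p}$) are met, so the KdV-type eigenvalue problem \eqref{b:60} is spectrally stable.

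\textbf{Main obstacle.} The only nontrivial step is the limit $\lim_{\al\to 0+}C_{\al,p}=\infty$ for non-integer $p$, since for $p=3$ one could exploit an algebraic cubic expansion that produced a clean zero-frequency term. For general $p$ this algebraic shortcut fails, so I would rely instead on the pointwise lower bound $|2\cos x+\eps|^p$ and a direct integration, losing a little in the constant but still attaining an $\eps$-uniform lower bound that forces $C_{\al,p}\to\infty$ as $\al\to 0+$. Everything else is a mechanical transcription of Sections \ref{sec:3} and \ref{sec:4.1}.
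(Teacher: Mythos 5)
Your proposal follows the paper's own route essentially verbatim: the paper's proof of Proposition \ref{prop:65} consists precisely of (i) reducing the monotonicity, continuity and limits of $\al\mapsto\|\phi_{\al,p}\|$ to the corresponding properties of $\al\mapsto C_{\al,p}$ via $\|\phi_{\al,p}\|=(p/(2C_{\al,p}))^{1/(p-2)}$, which the paper establishes ``by an identical argument to Proposition \ref{prop:35}'' using the same test family \eqref{d:18}, and (ii) observing that the rescaling \eqref{398} turns $\cm_+$ of Proposition \ref{prop:80} exactly into $\cl_+$, so that $\cl_+|_{\{\phi\}^\perp}\geq 0$ and $\cl_+[\phi']=0$, whence Corollary \ref{cor:10} applies. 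Your Parts 1 and 3 are correct and match this (your Part 3 even fixes a typo: the paper writes $|\phi|^{p-1}$ where $|\phi|^{p-2}$ is meant).

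There is, however, a genuine slip in Part 2. For fixed small $\eps$, the quantity $\eps^{p-1}/(\eps^{(p-2)/2}(\eps^3+\al\eps))$ does \emph{not} blow up as $\al\to 0+$: it converges to $c\,\eps^{(p-6)/2}$. The correct logic, as in the $p=3$ case of Proposition \ref{prop:35}, is that for every fixed $\eps$ one gets $\liminf_{\al\to 0+}C_{\al,p}\geq c\,\eps^{(p-6)/2}$, and one must \emph{then} send $\eps\to 0+$, using $(p-6)/2<0$. This repairs your argument for $2<p<6$, but it yields nothing at the endpoint $p=6$, where $\eps^{(p-6)/2}\equiv 1$ and the family \eqref{d:18} produces only an $\al$-uniform \emph{bounded} lower bound (optimizing $\eps$ as a function of $\al$ does not help either). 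Indeed, a frequency decomposition into $\{|\xi\mp 1|<1/2\}$ and its complement, followed by demodulation $u_\pm=e^{\pm ix}v_\pm$, indicates that $\sup_u \int|u|^6/\bigl(\|u\|^4\|(D-1)u\|^2\bigr)<\infty$, so that $\lim_{\al\to 0+}C_{\al,6}$ is finite and the claimed limit $\lim_{\al\to 0+}\|\phi_{\al,6}\|=0$ is not delivered by this construction. This defect is shared with the paper, which asserts without detail that ``the same example as in \eqref{d:18} provides the necessary almost maximizers'' for all $p\in(2,6]$; but since your write-up explicitly claims blow-up ``for any fixed small $\eps$,'' the gap should be flagged: either restrict the asymptotic statement to $p<6$ or supply a genuinely different argument at the critical endpoint.
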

   The justification for the stability of $\phi_{\al,p}$ is again by Corollary \ref{cor:10}. In fact, all the elements are already essentially established. Indeed, the linearized operator is 
   $$
   \cl_+=(D-1)^2+\om_{\al,p} - (p-1) |\phi_{\al,p}|^{p-1},
   $$
   which has the property $\cl_+[\phi']=0$, in addition to $\cl_+|_{\{\phi\}^\perp}\geq 0$, which is equivalent, through the scaling formula \eqref{398} of the corresponding property for $\cm_+$, established in Proposition \ref{prop:80}.

   \subsection{Generalized Benjamin waves as maximizers of a Sobolev embedding type inequality}
   \label{sec:4.2}
   In this section, we construct solutions of \eqref{b:50} for all powers $p: 2<p<\infty$, and not only for $L^2$ subcritical ones. On the flip side, and in sharp contrast with our contruction in Section \ref{sec:4.1}  it will turn out that some of these waves (generally for large enough powers of $p$) will be unstable. 
   
   More specifically, for $2<p\leq \infty$, 
   \begin{equation}
   	\label{500}
   	  \|u\|_{L^p}\lesssim  \|u\|_{H^1}\lesssim (\|(D-1) u\|^2+\om \|u\|^2)^{\f{1}{2}},
   \end{equation}
   where we have started with the Sobolev inequality, followed by the equivalence of norms $\|u\|_{H^1}\sim (\|(D-1) u\|^2+\om \|u\|^2)^{\f{1}{2}}$. 
   We intentionally did not specify the dependence of the constants in the above inequality, but note that they clearly depend on both $p,\om$. We formulate the relevant inequality in the following specific form  
    \begin{equation}
   	\label{510}
   	\|u\|_{L^p}^2\leq D_{p,\om}  (\|(D-1) u\|^2+\om \|u\|^2).
   \end{equation}
   where $D_{p,\om}$ is the exact constant in it. That is, 
   $$
   D_{p,\om}=\sup_{u\neq 0} \f{\|(D-1) u\|^2+\om \|u\|^2}{\|u\|_{L^p}^2}
   $$
   Note that $D_{p,\om}$, while clearly dependent upon $p$, is bounded as $p\to \infty$. One way to see that is to note that \eqref{500} is valid for $p=\infty$, whence \eqref{500} can be obtained via a Riesz-Thorin interpolation (between $p=\infty$ and the trivial case $p=2$, where $D_{2, \om}\leq \om^{-1}$). As a consequence 
   \begin{equation}
   	\label{520} 
   	 D_{p, \om}\leq  D_{\infty, \om}^{1-\f{2}{p}}D_{2, \om}^{\f{2}{p}} \leq \cd_\om.
   \end{equation}
   has a  bound uniform in $p\in (2, \infty)$. 
   We are now ready for the main existence result. 
   \begin{proposition}
   	\label{prop:1001} 
   	Let $p\in (2, \infty)$, $\om>0$. Then, there exists a maximizer $\vp_{p,\om}$ of the inequality \eqref{510}. That is, 
   	$$
   	D_{p,\om}= \f{\|(D-1) \vp\|^2+\om \|\vp\|^2}{\|\vp\|_{L^p}^2}
   	$$
   	Furthermore, $\vp$ is a weak $H^1(\rone)$ (and hence $H^3(\rone)$ as explained previously) solution of the Euler-Lagrange equation 
   	\begin{equation}
   		\label{550} 
   		(D-1)^2 \vp +\om \vp - \f{1}{D_{p,\om}\|\vp\|_p^{p-2}} |\vp|^{p-2} \vp=0.
   	\end{equation}
   while the linearized operator $\cm_+$ 
   $$
   \cm_+:=(D-1)^2 +\om  - \f{p-1}{D_{p,\om}\|\vp\|_p^{p-2}} |\vp|^{p-2} 
   $$
   is positive on a co-dimension one subspace, that is $ \cm_+|_{\{|\vp|^{p-2}\vp\}^\perp}\geq 0$. 
   \end{proposition}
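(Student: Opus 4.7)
The plan is to follow the strategy used in Proposition \ref{prop:10}, with the interpolation inequality replaced by the Sobolev-type inequality \eqref{510}. By homogeneity, I would fix a maximizing sequence $\{f_n\}\subset H^1(\rone)$ with $\|f_n\|_{L^p}=1$ and $\|(D-1)f_n\|^2+\om\|f_n\|^2\to D_{p,\om}^{-1}$, then apply the Lions concentration-compactness result, Theorem \ref{theo:102}, to the sequence $g_n:=|(D-1)f_n|^2+\om f_n^2\in L^1_+(\rone)$, whose total mass converges to $D_{p,\om}^{-1}$. The task is then, as usual, to rule out vanishing and dichotomy.

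Vanishing is handled exactly as in Proposition \ref{prop:10}: if vanishing of $g_n$ held, then $\sup_y\int_{y-1}^{y+1}f_n^2\,dx\to 0$, and the partition-of-unity bound
$$
\int_{j-1}^{j+1}|f_n|^p\,dx\lesssim \|f_n\eta(j-\cdot)\|_{L^2}^{p-2}\|f_n\eta(j-\cdot)\|_{H^1}^2,
$$
valid for any $p>2$ by Gagliardo-Nirenberg, would yield upon summing in $j$ the contradictory bound $\|f_n\|_{L^p}^p\to 0$.

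The dichotomy step is the main obstacle and the place where the argument differs substantively from Proposition \ref{prop:10}. After translating so that $y_n=0$ and introducing cut-offs $\chi_{n,\pm}(x)=\chi_\pm(x/R_n)$ with $f_{n,\pm}:=\chi_{n,\pm}f_n$, the commutator estimates already employed in Proposition \ref{prop:10} give
$$
\|(D-1)f_{n,+}\|^2+\om\|f_{n,+}\|^2\to\mu,\qquad \|(D-1)f_{n,-}\|^2+\om\|f_{n,-}\|^2\to D_{p,\om}^{-1}-\mu,
$$
for some $\mu\in(0,D_{p,\om}^{-1})$, and the middle piece satisfies $\|f_n\zeta_n\|_{H^1}\to 0$, hence $\|f_n\zeta_n\|_{L^p}\to 0$ by Sobolev embedding, so the $L^p$ mass splits cleanly: $\|f_n\|_{L^p}^p=\|f_{n,+}\|_{L^p}^p+\|f_{n,-}\|_{L^p}^p+o(1)$. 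Setting $a_\pm=\lim\|f_{n,\pm}\|_{L^p}\in[0,1]$, one has $a_+^p+a_-^p=1$. Applying \eqref{510} to $f_{n,\pm}$ separately forces $a_+^2\le D_{p,\om}\mu$ and $a_-^2\le D_{p,\om}(D_{p,\om}^{-1}-\mu)$, so $a_+^2+a_-^2\le 1$. But for $p>2$ and $a_\pm\in[0,1]$ one has $a_\pm^p\le a_\pm^2$ strictly unless $a_\pm\in\{0,1\}$; combined with $1=a_+^p+a_-^p\le a_+^2+a_-^2\le 1$, this forces $\{a_+,a_-\}=\{0,1\}$, and then if $a_+=1$, $a_-=0$ we conclude $\mu\ge D_{p,\om}^{-1}$, in contradiction with $\mu<D_{p,\om}^{-1}$. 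So dichotomy fails and compactness/tightness holds.

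A weakly convergent subsequence $f_{n_k}\rightharpoonup\vp$ in $H^1(\rone)$ then converges strongly in $L^p$ by tightness and Sobolev embedding, so $\|\vp\|_{L^p}=1$, and weak lower semicontinuity forces $\|(D-1)\vp\|^2+\om\|\vp\|^2=D_{p,\om}^{-1}$, i.e., $\vp$ saturates \eqref{510}. The Euler-Lagrange equation \eqref{550} and the spectral property $\cm_+|_{\{|\vp|^{p-2}\vp\}^\perp}\ge 0$ are read off from the scalar function
$$
g(\eps):=\|(D-1)(\vp+\eps h)\|^2+\om\|\vp+\eps h\|^2-D_{p,\om}^{-1}\|\vp+\eps h\|_{L^p}^2,
$$
which is nonnegative with absolute minimum $g(0)=0$ (precisely by the definition of $D_{p,\om}$): $g'(0)=0$ for all $h\in H^1$ produces \eqref{550}, while $g''(0)\ge 0$ restricted to $h\perp|\vp|^{p-2}\vp$ in $L^2$ (the constraint direction, which annihilates the quadratic correction coming from the $L^p$ term) delivers the stated coercivity of $\cm_+$. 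The crux of the whole argument is the strict inequality $a_+^p+a_-^p<a_+^2+a_-^2$ for split $L^p$-mass with $p>2$; this replaces the Cauchy-Schwarz bound used in Proposition \ref{prop:10} and is precisely what allows the construction to proceed for the full range $2<p<\infty$, including the $L^2$-supercritical case.
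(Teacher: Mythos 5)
Your overall strategy coincides with the paper's: the paper's own proof of this proposition is deliberately terse, saying only that one runs the concentration--compactness machinery of Proposition \ref{prop:80} on $g_n=|(D-1)f_n|^2+\om f_n^2$ and then reads off the Euler--Lagrange equation and the sign of the second variation from the scalar function $g(\eps)$, exactly as you do. Where you genuinely add something is the dichotomy step: the paper claims the details are ``identical or very close to'' Proposition \ref{prop:80}, but the argument there bounds $\int|f_{n,\pm}|^p$ by $C_{\al,p}\big((\la-\mu)a_-^{p-2}+\mu a_+^{p-2}\big)$ using the interpolation inequality \eqref{30}, which is tied to the functional $\int|u|^p/\big(\|u\|^{p-2}(\cdots)\big)$ and to $p\le 6$; it does not transfer verbatim to the Sobolev quotient $\|u\|_{L^p}^2/(\cdots)$. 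Your replacement --- splitting the $L^p$ mass as $a_+^p+a_-^p=1$, bounding $a_\pm^2$ by $D_{p,\om}$ times the split $H^1$-masses, and invoking the strict inequality $a^p<a^2$ for $a\in(0,1)$, $p>2$ --- is the correct strict-subadditivity argument for this functional and is what actually makes the construction work for all $2<p<\infty$. The Euler--Lagrange and second-variation computations are right, including the observation that the constraint $h\perp|\vp|^{p-2}\vp$ kills the rank-one correction $(2-p)\dpr{|\vp|^{p-2}\vp}{h}^2$ in $\f{d^2}{d\eps^2}\|\vp+\eps h\|_{L^p}^2$.

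There is one genuine, though fixable, gap: the inequality you invoke in the vanishing step, $\int_{j-1}^{j+1}|f_n|^p\,dx\lesssim \|f_n\eta(j-\cdot)\|_{L^2}^{p-2}\|f_n\eta(j-\cdot)\|_{H^1}^2$, is \emph{false} for $p>6$. The sharp one-dimensional Gagliardo--Nirenberg bound is $\|v\|_{L^p}^p\lesssim \|v\|_{L^2}^{\f{p+2}{2}}\|v'\|_{L^2}^{\f{p-2}{2}}$, and converting this to your form requires shifting weight from the $L^2$ factor onto the $H^1$ factor, which is only possible when $p-2\le\f{p+2}{2}$, i.e.\ $p\le 6$; a rescaling $v\mapsto v(\la\cdot)$, $\la\to\infty$, shows the claimed bound fails for $p>6$. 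Since $p>6$ is precisely the new range this proposition is meant to cover, the step needs repair. The repair is routine: use the bound $\int_{j-1}^{j+1}|f_n|^p\lesssim \|f_n\eta(j-\cdot)\|_{L^2}^{\f{p+2}{2}}\|f_n\eta(j-\cdot)\|_{H^1}^{\f{p-2}{2}}$ and, when summing over $j$, absorb the excess power of $\|f_n\eta(j-\cdot)\|_{H^1}$ using its uniform boundedness (for $p\ge 6$, where $\f{p-2}{2}\ge 2$) rather than the inequality $\|\cdot\|_{L^2}\le\|\cdot\|_{H^1}$ (which is what one uses for $p<6$); alternatively, invoke Lions' vanishing lemma directly, or interpolate $\|f_n\|_{L^p}^p\le\|f_n\|_{L^\infty}^{p-6}\|f_n\|_{L^6}^6$ once the $L^6$ case is settled. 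With that adjustment the proof is complete.
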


   \begin{proof}
   	The existence part is very similar to the approach taken in the proof of Proposition \ref{prop:80}. We take a minimizing sequence $f_n$, with the property $\|(D-1) f_n\|^2+\om \|f_n\|^2=1$. With that,  
   	$$
   	\lim_n \|f_n\|_p^2=D_{p,\om}
   	$$
   	Then, setting as before $g_n:= |(D-1) f_n|^2+\om f_n^2$,  one proceeds to rule out vanishing and splitting, whence tightness is the only remaining option for $g_n$. In particular, $\{g_n\}$  becomes (after eventual translation and a subsequence) a convergent sequence yielding a strong limit, which in turn produces a strong limit $\vp: \lim_n \|f_n-\vp\|_{H^1}=0$. This function $\vp$ produces the solution claimed herein, and we omit the details as they are identical or very close to the one presented in the proof of Proposition \ref{prop:80}. 
   	
   	Next,  consider the function 
   	$$
   	g(\eps)= \|(D-1) (\vp+\eps h)\|^2+\om \|\vp+\eps h\|^2 - D_{p,\om}\|\vp+\eps h\|_p^p.
   	$$
   	Clearly, $g(0)=0$, and for $\vp$ to be a minimizer, it is necessary that $g'(0)=0$. This yields the Euler-Lagrange equation \eqref{550}. Taking $h\perp \vp|^{p-2}\vp$ and exploiting the minimization property yields $g''(0)\geq 0$, which in turn is equivalent to the property $ \cm_+|_{\{|\vp|^{p-2}\vp\}^\perp}\geq 0$. 
   \end{proof}
   Clearly, one produces solutions $\phi$ of \eqref{b:50} as before, by taking 
   $$
   \phi:= \f{1}{D_{p,\om}^{\f{1}{p-2}}\|\vp\|_p} \vp.
   $$
   One can translate the results of Proposition \ref{prop:1001} to $\phi$ as follows. 
   \begin{corollary}
   	\label{cor:40} 
   Let $2<p<\infty$, $\om>0$. Then, $\phi$ is a classical solution of  \eqref{b:50} and the linearized operator satisfies 
   $$
   \cl_+=(D-1)^2 +\om  - (p-1) |\vp|^{p-2}, \cl_+|_{\{|\phi|^{p-2}\phi\}^\perp}\geq 0.
   $$
 In addition, 
   \begin{eqnarray}
   	\label{700} 
   	\|\phi\|_p= \f{1}{D_{p,\om}^{\f{1}{p-2}}}.
   \end{eqnarray}
   \end{corollary}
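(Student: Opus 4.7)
The corollary is essentially a scaling translation of Proposition \ref{prop:1001}, so the plan is to carry out the rescaling $\vp \mapsto \phi$ carefully and check that each conclusion transfers. Write $\lambda := D_{p,\om}^{1/(p-2)}\|\vp\|_p$, so that $\vp = \lambda \phi$. First I would substitute this into the Euler--Lagrange equation \eqref{550} and use homogeneity. Since $(D-1)^2$ and multiplication by $\om$ are linear, and $|\lambda \phi|^{p-2}(\lambda\phi) = \lambda^{p-1}|\phi|^{p-2}\phi$, the equation becomes
\[
\lambda\left[(D-1)^2\phi + \om\phi - \frac{\lambda^{p-2}}{D_{p,\om}\|\vp\|_p^{p-2}}|\phi|^{p-2}\phi\right]=0.
\]
By the very definition of $\lambda$, the ratio $\lambda^{p-2}/(D_{p,\om}\|\vp\|_p^{p-2})$ equals $1$, so $\phi$ solves \eqref{b:50} classically (smoothness being inherited from $\vp$ via the bootstrap already discussed in the remarks after Theorem \ref{theo:20}).

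Next, I would verify the identity $\cm_+ = \cl_+$. The potential in $\cm_+$ is $\frac{p-1}{D_{p,\om}\|\vp\|_p^{p-2}}|\vp|^{p-2}$, and since $|\vp|^{p-2}=\lambda^{p-2}|\phi|^{p-2} = D_{p,\om}\|\vp\|_p^{p-2}|\phi|^{p-2}$, this simplifies to $(p-1)|\phi|^{p-2}$. Thus $\cm_+$ and $\cl_+$ are literally the same self-adjoint operator on $H^2(\rone)$. Moreover, the constraint subspaces agree as well: $|\vp|^{p-2}\vp = \lambda^{p-1}|\phi|^{p-2}\phi$ is a nonzero scalar multiple of $|\phi|^{p-2}\phi$, so
\[
\{|\vp|^{p-2}\vp\}^\perp = \{|\phi|^{p-2}\phi\}^\perp.
\]
The conclusion $\cl_+|_{\{|\phi|^{p-2}\phi\}^\perp}\geq 0$ is therefore a direct restatement of the second-variation inequality established for $\cm_+$ in Proposition \ref{prop:1001}.

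Finally, \eqref{700} is immediate from the definition of the rescaling: $\|\phi\|_p = \|\vp\|_p/(D_{p,\om}^{1/(p-2)}\|\vp\|_p) = D_{p,\om}^{-1/(p-2)}$. There is no substantive obstacle here; the only thing to watch is bookkeeping of the exponent $p-2$ in the scaling factor, which is what allows the nonlinear term $|\phi|^{p-2}\phi$ to absorb the multiplicative constants produced by $(D-1)^2\phi + \om\phi$ and reduce \eqref{550} to \eqref{b:50} with coefficient exactly $1$.
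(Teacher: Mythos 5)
Your proposal is correct and follows exactly the route the paper intends (and only sketches): substituting $\vp=\lambda\phi$ with $\lambda=D_{p,\om}^{1/(p-2)}\|\vp\|_p$ into \eqref{550}, observing that $\lambda^{p-2}=D_{p,\om}\|\vp\|_p^{p-2}$ normalizes the nonlinear coefficient to $1$, identifying $\cm_+$ with $\cl_+$ and the two constraint subspaces, and reading off \eqref{700} from the scaling. Nothing is missing; you even implicitly correct the paper's typo of $|\vp|^{p-2}$ for $|\phi|^{p-2}$ in the displayed formula for $\cl_+$.
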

Here, we should mention that $\cl_+$ has exactly one negative eigenvalue, i.e. $n(\cl_+)=1$. Indeed, the property $\cl_+|_{\{|\phi|^{p-2}\phi\}^\perp}\geq 0$ guarantees $n(\cl_+)\leq 1$, while by  direct inspection 
$$
\dpr{\cl_+\phi}{\phi}=
\|(D-1)\phi\|^2+\om \|\phi\|^2 - (p-1) \int |\phi|^p=(2-p)\int |\phi|^p<0,
$$
shows that $n(\cl_+)\geq 1$, hence $n(\cl_+)=1$. 
\subsection{On the instability of the generalized Benjamin waves for large $p$}
In this section, we combine several different criteria to bear on the proof of the instability. More concretely, we consider the eigenvalue problem \eqref{b:60}. We apply the instability index count for it (i.e. formula \eqref{e:20}), the fact that $n(\cl_+)=1$ (established in Corollary  \ref{cor:40}),   as well as Proposition \ref{prop:43} (where we still need to establish the existence of ground states $\Psi_0, \psi_0$). Specifically, the instability index theory yields us that since $\phi'\in Ker[\cl_+]$, we have that the solution of 
$$
\p_x \cl_+ \eta=\phi',
$$
which is $\eta=\cl_+^{-1} \phi$ is a member of $gKer(\cl)$. In view of the non-degeneracy assumption, that is $Ker[\cl_+]=span[\phi']$, we see that this is the only element of $gKer(\cl)$, provided $\dpr{\cl_+^{-1} \phi}{\phi}\neq 0$.  Indeed, potential further elements need to solve 
$$
\p_x\cl_+ \tilde{\eta}=\eta=\cl_+^{-1} \phi.
$$
Testing the Fredholmness of such equation (i.e. dot product with $\phi$) yields 
$$
\dpr{\cl_+^{-1} \phi}{\phi}=\dpr{\p_x\cl_+ \tilde{\eta}}{\phi}=
-\dpr{ \tilde{\eta}}{\cl_+\phi'}=0.
$$
Finally, we apply Proposition \ref{prop:43}, which we can do, provided we can establish that $\cl|_{\{\phi\}^\perp}>0$ fails. Indeed, it  asserts that in fact $\dpr{\cl_+^{-1} \phi}{\phi}>0$, which also rules out further elements in $gKer[\cl]$. So, it turns out that the matrix $\cd$ in the instability index formula consists of one element and in fact 
$$
\cd_{11}=\dpr{\cl_+\eta}{\eta}=\dpr{\cl_+^{-1} \phi}{\phi}>0
$$
This yields that the right-hand side of \eqref{e:20} is equal to one, hence implying the instability. This completes the proof of Theorem \ref{theo:30}, provided we are able to establish the remaining claims. 
We do so now. 

The first unfinished task is to establish   that $\cl_+$ achieves its ground states. While this is a classical fact for standard Schr\"odinger operators, it appears that such a property is missing in the literature, for non-local differential operators, like the one here, involving $D$. We have 
\begin{lemma}
	\label{le:90} 
Consider the non-local operators $\cl= (D-1)^2 + V$, with domain $D(\cl)=H^2(\rone)$, and $V\in C^\delta(\rone)$, for some  $\de>0$, $\lim_{|x|\to \infty}  V(x)=0$. Then, assuming that 
$$
\mu=\inf\{\dpr{\cl u}{u}: u\in H^1(\rone):   \|u\|=1\}<0.
$$
 then, $\cl$ achieves its ground states. That is, there exists an element $\psi_0\in D(\cl)$, so that  $\cl\psi_0=\mu \psi_0$.  
 
 More generally, for a fixed element $g\in L^2(\rone)$ and 
 $$
 \mu_g=\inf\{\dpr{\cl u}{u}: u\in H^1(\rone): u\perp g,   \|u\|=1 \}<0
 $$
 there exists $\psi_g\in D(\cl)$, so that $\cl\psi_g=\mu_g \psi_g+\al g$, for some $\al$.   
\end{lemma}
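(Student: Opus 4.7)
The plan is a direct concentration argument for a minimizing sequence, where the compactness is driven by the fact that $V$ vanishes at infinity (so that the potential term behaves like a ``compact perturbation'' at the level of the quadratic form). I will first sketch the unconstrained case $\mu$ and then indicate the trivial modification for $\mu_g$.

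First, set $q(u):=\dpr{\cl u}{u}=\|(D-1)u\|^{2}+\int V u^{2}\, dx$. Since $V\in L^{\infty}(\rone)$ (a consequence of $V\in C^{\delta}$ and $V(x)\to 0$ as $|x|\to\infty$), and since $\|(D-1)u\|^2+\|u\|^2\sim \|u\|_{H^{1}}^{2}$, the form $q$ is well-defined on $H^{1}(\rone)$ and bounded below there. Pick a minimizing sequence $\{u_{n}\}\subset H^{1}(\rone)$ with $\|u_{n}\|=1$ and $q(u_{n})\to \mu<0$. From the boundedness of $V$ and the equivalence of norms above, $\{u_{n}\}$ is uniformly bounded in $H^{1}(\rone)$, hence admits a subsequence with $u_{n}\rightharpoonup u_{*}$ weakly in $H^{1}(\rone)$.

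The key step is to pass to the limit in the potential term, and in particular to rule out the possibility $u_{*}=0$. For any $R>0$, split
\[
\int V u_{n}^{2}\, dx=\int_{|x|\leq R} V u_{n}^{2}\, dx+\int_{|x|>R} V u_{n}^{2}\, dx.
\]
By Rellich's theorem, $u_{n}\to u_{*}$ strongly in $L^{2}(|x|\leq R)$, so the first term converges to $\int_{|x|\leq R}Vu_{*}^{2}dx$. For the second term, use $|V(x)|\leq \eps$ for $|x|>R_{\eps}$ together with $\|u_{n}\|=1$ to get an $O(\eps)$ bound uniformly in $n$. Letting $n\to\infty$ and then $R\to\infty$ yields $\int V u_{n}^{2}\,dx\to \int V u_{*}^{2}\, dx$. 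If one had $u_{*}=0$, then $\int V u_{n}^{2}\,dx\to 0$, and weak lower semicontinuity of $\|(D-1)\cdot\|^{2}$ would give $\mu=\liminf q(u_{n})\geq 0$, contradicting $\mu<0$. Hence $u_{*}\neq 0$.

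Now by weak lower semicontinuity of $\|(D-1)\cdot\|^{2}$ and the convergence of the potential term, $q(u_{*})\leq \liminf_{n} q(u_{n})=\mu$. On the other hand, setting $a:=\|u_{*}\|\in (0,1]$, the definition of $\mu$ (applied to $u_{*}/a$) gives $q(u_{*})\geq \mu\,a^{2}$. Combining these, $\mu\,a^{2}\leq q(u_{*})\leq \mu$, and since $\mu<0$ this forces $a^{2}\geq 1$, hence $\|u_{*}\|=1$ and $q(u_{*})=\mu$. Thus $u_{*}$ is an actual minimizer, and a standard Lagrange multiplier argument on $H^{1}(\rone)$ yields $\cl u_{*}=\mu u_{*}$ weakly. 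Finally, writing $(D-1)^{2}\psi_{0}=(\mu-V)\psi_{0}\in L^{2}(\rone)$ and using that the symbol $(|\xi|-1)^{2}$ dominates $\tfrac{1}{2}\xi^{2}-C$, a standard Fourier/bootstrap argument yields $\psi_{0}\in H^{2}(\rone)=D(\cl)$, as required.

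For the constrained infimum $\mu_{g}$, the identical argument applies on the closed subspace $\{u\in H^{1}(\rone):u\perp g\}$. Weak convergence $u_{n}\rightharpoonup\psi_{g}$ in $H^{1}$ preserves the constraint $\psi_{g}\perp g$, and the non-vanishing step is unchanged because $\mu_{g}<0$. The only difference is in the Euler--Lagrange step: Lagrange multipliers now produce $\cl\psi_{g}=\mu_{g}\psi_{g}+\alpha g$ for some scalar $\alpha$, which is exactly the claim. The main technical obstacle, in both cases, is the non-vanishing of the weak limit; this is where the hypothesis $V(x)\to 0$ at infinity is crucially used, so that the potential term is sequentially continuous under weak $H^{1}$ convergence.
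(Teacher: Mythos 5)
Your argument is correct and follows the same overall strategy as the paper's proof in the Appendix: minimizing sequence normalized in $L^2$, uniform $H^1$ bound, weak limit, exclusion of the zero limit via $\mu<0$, recovery of the constraint $\|u_*\|=1$ by the scaling/homogeneity argument (your chain $\mu a^2\leq q(u_*)\leq \mu$ is the same computation the paper runs as a contradiction), and then Euler--Lagrange. The one place you genuinely diverge is the compactness mechanism for the potential term: the paper proves that $\{Vf_n\}$ is precompact in $L^2(\rone)$ via a Kolmogorov--Riesz-type argument (uniform tail smallness plus $L^2$-equicontinuity under translation, which is where the hypothesis $V\in C^\delta$ is actually used), whereas you use Rellich's theorem on $\{|x|\leq R\}$ combined with $\sup_{|x|>R}|V|\to 0$. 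Your route is more elementary and, as a bonus, needs only $V\in L^\infty$ with decay at infinity rather than H\"older continuity; the paper's route packages the compactness as a statement about the sequence $\{Vf_n\}$ itself, which is slightly more portable if one wanted strong $L^2$ convergence of $Vf_n$ for other purposes. Your closing remark on bootstrapping the minimizer into $H^2(\rone)=D(\cl)$ is a detail the paper elides, and it is worth keeping.
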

We provide a detailed  proof of Lemma \ref{le:90} in the Appendix. 

The remaining piece of the proof of the instability is the verification of the failure of the property $\cl_+|_{\{\phi\}^\perp}\geq 0$. This is indeed the case, under certain conditions on the parameters, as described in the next result. 
\begin{lemma}
	\label{le:89} 
	The property $\cl_+|_{\{\phi\}^\perp}\geq 0$ fails for large enough values of $p$. In fact, if $p,\om$ satisfy 
	$$
	\f{p}{4}+\f{4}{p} > \f{1}{\om} + \f{5}{2}, 
	$$
	then, $\cl_+|_{\{\phi\}^\perp}\geq 0$ fails. 
\end{lemma}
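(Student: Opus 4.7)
The goal is to exhibit a single test function $h \in H^1(\rone)$ with $h\perp\phi$ and $\langle \cl_+ h,h\rangle<0$, which directly refutes $\cl_+|_{\{\phi\}^\perp}\geq 0$. The natural candidate is the generator of the $L^2$-preserving dilation at $\la=1$,
$$
h \;:=\; \tfrac{1}{2}\phi + x\phi' \;=\; \partial_\la\bigl|_{\la=1}\bigl[\la^{1/2}\phi(\la\,\cdot)\bigr].
$$
This lies in $H^1(\rone)$ thanks to the decay $|\phi'|+|\phi''|\leq C(1+|x|)^{-3}$ noted before Proposition \ref{prop:50}, and the orthogonality $\langle h,\phi\rangle = \tfrac{1}{2}\|\phi\|^2 + \int x\phi'\phi\,dx = 0$ is a single integration by parts (boundary terms vanish since $\phi\sim |x|^{-2}$).

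To compute $\langle \cl_+ h,h\rangle$ I will introduce $S_\la\phi(x):=\la^{1/2}\phi(\la x)$ and differentiate $f(\la):=\langle\cl_+ S_\la\phi, S_\la\phi\rangle$ twice at $\la=1$. The identity
$$
\tfrac{1}{2}\,\partial_\la^2 f\bigl|_{\la=1} \;=\; \langle \cl_+ h,h\rangle + \langle \cl_+\phi, W\rangle,\qquad W:=\partial_\la^2\bigl|_{\la=1}S_\la\phi = -\tfrac{1}{4}\phi + x\phi' + x^2\phi'',
$$
reduces everything to explicit integrals. Using $\cl_+\phi=(2-p)|\phi|^{p-2}\phi$ together with the integration-by-parts identities
$\int x|\phi|^{p-2}\phi\phi'\,dx = -\tfrac{1}{p}\|\phi\|_p^p$ and
$\int x^2|\phi|^{p-2}\phi\phi''\,dx = \tfrac{2}{p}\|\phi\|_p^p-(p-1)\int x^2|\phi|^{p-2}(\phi')^2\,dx$, the \emph{uncontrolled} weighted moment $\int x^2|\phi|^{p-2}(\phi')^2\,dx$ that appears in $\partial_\la^2 f(1)$ cancels exactly against the one coming from $\langle \cl_+\phi, W\rangle$, yielding the clean formula
$$
\langle \cl_+ h,h\rangle \;=\; \|\phi'\|^2 + \frac{(p-2)(4-p)}{4p}\,\|\phi\|_p^p.
$$

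Invoking Pohozaev's identity \eqref{b:80}, namely $\|\phi'\|^2 = (\om+1)\|\phi\|^2 - \tfrac{2}{p}\|\phi\|_p^p$, the formula simplifies to
$$
\langle \cl_+ h,h\rangle \;=\; (\om+1)\|\phi\|^2 - \frac{p^2-6p+16}{4p}\,\|\phi\|_p^p.
$$
Pairing \eqref{b:50} with $\phi$ gives $\|\phi\|_p^p = \|(D-1)\phi\|^2 + \om\|\phi\|^2 > \om\|\phi\|^2$, strictly, since $\hat\phi$ cannot be supported on the measure-zero set $\{|\xi|=1\}$. Hence a sufficient condition for $\langle \cl_+ h,h\rangle<0$ is the coarse inequality
$$
\om\cdot\frac{p^2-6p+16}{4p} \;>\; \om+1
\;\Longleftrightarrow\; \om(p^2-10p+16)>4p
\;\Longleftrightarrow\; \om(p-2)(p-8)>4p,
$$
which, after dividing by $4p$, is precisely the stated criterion $\tfrac{p}{4}+\tfrac{4}{p}>\tfrac{1}{\om}+\tfrac{5}{2}$.

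The main obstacle will be the scaling bookkeeping: the weighted moment $\int x^2|\phi|^{p-2}(\phi')^2\,dx$, about which we have no a priori control, enters both $\partial_\la^2 f(1)/2$ and $\langle\cl_+\phi,W\rangle$, and the whole argument hinges on verifying that these two contributions cancel exactly. Once that cancellation is in hand the statement reduces to Pohozaev's identities \eqref{b:70}--\eqref{b:80} and the elementary strict bound $\|\phi\|_p^p>\om\|\phi\|^2$, making the remainder essentially an algebraic verification.
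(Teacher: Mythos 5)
Your proposal is correct and is essentially the paper's argument: the same test function $\eta=x\phi'+\tfrac12\phi$, the same reduction via the Pohozaev identities \eqref{b:70}--\eqref{b:80}, and the same final bound $\om\|\phi\|^2\leq\|\phi\|_p^p$ from \eqref{b:70}; your intermediate formula $\|\phi'\|^2+\tfrac{(p-2)(4-p)}{4p}\|\phi\|_p^p$ is equivalent to the paper's \eqref{730}, and the cancellation of the weighted moment $\int x^2|\phi|^{p-2}(\phi')^2\,dx$ that you flag as the main risk does go through. The only difference is bookkeeping: you evaluate $\dpr{\cl_+\eta}{\eta}$ as the second $\la$-derivative of the dilation family $\la^{1/2}\phi(\la\cdot)$, whereas the paper computes $\cl_+(x\phi')$ directly via the commutator identity \eqref{b:60} and the differentiated profile equation, so that uncontrolled moment never appears in the first place.
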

\begin{proof}
	
Here, we make a concrete calculation regarding the property $\cl_+|_{\{\phi\}^\perp}\geq 0$, which in conjuction with $n(\cl_+)=1$ is in fact necessary for stability. Thus, if we show that the property $\cl_+|_{\{\phi\}^\perp}\geq 0$ fails, we would have established instability for the respective model. 

To this end, we construct $\eta\perp \phi$, and we will show that under certain circumstances (i.e. large enough $p$), $\dpr{\cl_+\eta}{\eta}<0$, whence instability occurs as elucidated above. Specifically, let 
$$
\eta:= x\phi'+\f{1}{2}\phi
$$
   which by direct inspection satisfies $\eta\perp \phi$. Next, we calculate  $\cl_+\eta$.  Namely, using the commutation formula \eqref{b:60}, we compute 
   \begin{eqnarray*}
   	\cl_+(x\phi') &=&  D^2(x\phi') - 2 D (x\phi') +(\om+1)(x\phi') - (p-1) |\phi|^{p-2} \phi x\phi'=\\
   	&=& x\p_x( (D-1)^2\phi+\om \phi - |\phi|^{p-2}\phi)-2\phi''-2 D\phi=-2\phi''-2 D\phi. \\
   	\cl_+(\f{1}{2}\phi) &=& \f{p-2}{2} \phi'' +(p-2) D\phi - \f{(p-2)(\om+1)}{2} \phi
   \end{eqnarray*}
   Alltogether, 
   \begin{equation}
   	\label{710} 
   	\cl_+\eta = \f{p-6}{2} \phi'' +(p-4) D\phi - \f{(p-2)(\om+1)}{2} \phi. 
   \end{equation}
Thus, as $\phi\perp \eta$, 
\begin{eqnarray*}
	\dpr{\cl_+\eta}{\eta}  &=&  
	\f{p-6}{2}\dpr{\phi''}{x\phi'+\f{1}{2}\phi}+(p-4) \dpr{D\phi}{x\phi'+\f{1}{2}\phi}=\f{6-p}{2} \|\phi'\|^2+\f{p-4}{2} \|D^{\f{1}{2}} \phi\|^2.
\end{eqnarray*}
Using the Pohozaev's identities, \eqref{b:70}and \eqref{b:80}, we can rewrite the last expression as 
\begin{equation}
	\label{730} 
		\dpr{\cl_+\eta}{\eta}  = (\om+1) \|\phi\|^2 - \left(\f{p}{4}+\f{4}{p}-\f{3}{2}\right) \|\phi\|_p^p.
\end{equation}
We claim that the expression in \eqref{730} will necessarily becomes negative for large enough $p$, thus proving our claim. Indeed, from \eqref{b:70} and \eqref{700}, we have that 
$$
\om \|\phi\|^2\leq \|(D-1)\phi\|^2+\om \|\phi\|^2 = \|\phi\|_p^p=\f{1}{D_{p,\om}^{\f{p}{p-2}}}.
$$
It follows that 
$$
	\dpr{\cl_+\eta}{\eta}  \leq \f{1}{D_{p,\om}^{\f{p}{p-2}}}\left(
	 \f{\om+1}{\om} -\left(\f{p}{4}+\f{4}{p}-\f{3}{2}\right) \right)<0,
$$
provided $\left(\f{p}{4}+\f{4}{p}-\f{3}{2}\right) > \f{\om+1}{\om}$.

\end{proof}


   \appendix
   
   \section{Proof of Lemma \ref{le:93}}
   The proof below of Lemma \ref{le:93} is lifted verbatim from \cite{PS18}. 	
   First, we can without loss of generality assume that $\|\xi_0\|=1$.  
   	Consider 
   	$
   	\eta:=\ch^{-1} \xi_0  - \dpr{\ch^{-1} \xi_0}{\xi_0} \xi_0\perp \xi_0. 
   	$
   	It follows that 
   	\begin{eqnarray*}
   		0 &\leq & \dpr{\ch\eta}{\eta}=\dpr{\ch[ \ch^{-1} \xi_0  - \dpr{\ch^{-1} \xi_0}{\xi_0} \xi_0]}{ \ch^{-1} \xi_0  - \dpr{\ch^{-1} \xi_0}{\xi_0} \xi_0}= \\
   		&=& \dpr{\xi_0- \dpr{\ch^{-1} \xi_0}{\xi_0}  \ch \xi_0}{ \ch^{-1} \xi_0  - \dpr{\ch^{-1} \xi_0}{\xi_0} \xi_0}=\\
   		&=& - \dpr{\ch^{-1} \xi_0}{\xi_0} +\dpr{\ch^{-1} \xi_0}{\xi_0}^2 \dpr{\ch\xi_0}{\xi_0} \leq - \dpr{\ch^{-1} \xi_0}{\xi_0},
   	\end{eqnarray*}
   	where we have used the assumption $\dpr{\ch\xi_0}{\xi_0} \leq 0$. 
   	It follows that 
   	$\dpr{\ch^{-1} \xi_0}{\xi_0}\leq 0$, which is the claim. 
    
   \section{Proof of Lemma \ref{le:90}} 
   We choose a minimizg sequence for $\mu$, say $f_n\subset H^2(\rone): \|f_n\|=1$, so that $\lim_n \dpr{\cl f_n}{f_n}=\mu<0$. 
Since $|\int V|f_n|^2|\leq \max_{\rone} |V|$, we have a bound on $\|f_n\|_{H^1}$, since 
$$
\dpr{(\cl-\mu)f_n}{f_n}\geq \|(D-1) f_n\|^2-\mu \|f_n\|^2 - \|V\|_{L^\infty}.
$$
It follows that $\limsup_n \|f_n\|_{H^1}^2\leq C_\mu\limsup_n ( \|(D-1) f_n\|^2-\mu \|f_n\|^2)\leq C \|V\|_{L^\infty}$. So, we have a bounded sequence in $H^1(\rone)$, so we may take a weakly convergent (in $H^1$) subsequence $f_n\rightharpoonup f$. By the properties of $V$,  
$$
\lim_{R\to \infty} \sup_n  \int_{|x|>R} V^2(x) f_n^2(x) dx=0. 
$$
Also, an elementary estimate shows that since $V\in C^\de(\rone)$, we have
\begin{eqnarray*}
	\|V(\cdot+h) f_n(\cdot+h)-V f_n(\cdot)\| &\leq &  \sup_x |V(x+h)-V(x)|\|f_n\|+\|V\|_{L^\infty} \|f_n(\cdot+h)-f_n(\cdot)\|\leq \\
	&\leq & C |h|^\de \|f_n\|  + \|V\|_{L^\infty} |h| \|f_n\|_{H^1}.
\end{eqnarray*}
Taking $\lim_{h\to 0}\sup_n $, 
$$
\lim_{h\to 0} \sup_n \|V(\cdot+h) f_n(\cdot+h)-V f_n(\cdot)\| =0.
$$
In conclusion, $\{V f_n\}_n$ is a pre-compact sequence in $L^2(\rone)$. One may then select a strongly convergent (in $L^2$)  subsequence of  $Vf_n$. Without loss of generality, we may assume that $Vf_n$ itself is strongly convergent and it converges to its weak limit $Vf$, i.e. $\lim_n \|V f_n-V f\|=0$. As a consequence 
$$
	\lim_n \int V(x) f_n^2 =	\lim_n [ \int f_n(V f_n-V f) + \int f(V f_n - V f)]+ \int V  f^2 = 
	 \int V  f^2 
$$
Up to this point, we have not ruled out the situation $f=0$, and we do so now. Indeed, assuming that $f=0$, we have 
$$
\mu=\lim_n \dpr{\cl f_n}{f_n} \geq \liminf_n \int |(D-1) f_n|^2+\lim_n \int V(x) f_n^2(x) dx\geq 0,
$$
a contradiction with the assumption $\mu<0$. Hence $f\neq 0$, but note that by the the lower semi-continuity of the $L^2$ norm with respect to the weak convergence in $L^2$, $\|f\|\leq \liminf_n \|f_n\|=1$. 

In addition, since  $f_n\rightharpoonup f$ in $H^1$, it follows that $(D-1) f_n \rightharpoonup (D-1)f$ weakly in $L^2$, whence by the lower semi-continuity of the $L^2$ norm, 
\begin{eqnarray*}
	\liminf_n \int |(D-1) f_n|^2 \geq \int |(D-1) f|^2
\end{eqnarray*}
It follows that 
$$
\mu=\liminf_n \dpr{\cl f_n}{f_n}\geq \dpr{\cl f}{f}.
$$
We now show that either of the strict inequalities $\|f\|<1$ or $\mu>\dpr{\cl f}{f}$ leads to a contradiction (and hence both are equalities). Indeed, assuming that $\dpr{\cl f}{f}<\mu$ and letting $\al:=\f{1}{\|f\|}\geq 1$, we obtain (recall $\mu<0$), 
$$
\dpr{\cl (\al  f)}{\al f}=\al^2 \dpr{\cl f}{f}<\al^2 \mu\leq \mu.
$$
It follows that $f_\al:=\al f, \|f_\al\|=1$ has the property $\dpr{\cl f_\al}{f_\al}<\mu$, a contradiction with the definition of $\mu$. Similar contradiction is reached, if $\dpr{\cl f}{f}\leq \mu$, but $\al>1$. So, $\|f\|=1$ and $\dpr{\cl f}{f}=\mu$. It follows that an $L^2$ strong convergence $f_n\to f$ is true (at least on a subsequence) and also $f$ is a solution to the constrained minimization problem 
$$
\mu=\inf\{\dpr{\cl u}{u}: \|u\|=1\}.
$$
An easy Euler-Lagrange argument yields  that $\cl f=\mu f$. 
More generally, same proof works in the case of $\mu_g$, except in the last step, the Euler-Lagrange equation for 
$$
\mu_g=\inf\{\dpr{\cl u}{u}: u\perp g, \|u\|=1\}<0
$$
is  that $\dpr{\cl f-\mu f}{h}=0$, for all test functions  with the property $h: h\perp g$. It follows that there exists $\al\in\rone$, so that $\cl f=\mu f+\al g$, as claimed.


\begin{thebibliography}{99}
	
\bibitem{ADM}	M. Abdallah, M. Darwich, L. Molinet, \emph{On the Uniqueness and Orbital Stability of Slow and Fast Solitary Wave Solutions of the Benjamin Equation}, preprint, avaliable at https://arxiv.org/pdf/2404.04711
	
 
\bibitem{ABR} J. Albert,  J. Bona, J.M. Restrepo, 
\emph{Solitary-wave solutions of the Benjamin equation}, {\em SIAM J. Appl. Math.}, {\bf 59}, (1999), no.6, p. 2139--2161.


\bibitem{Ben} T.B. Benjamin, Lectures on nonlinear wave motion, Nonlinear Wave Motion, A. C. Newell, ed., AMS, Providence, R. I., 15 (1974), p. 3--47.

 \bibitem{Ben1} 
 T. B. Benjamin,  \emph{A new kind of solitary waves}, {\em J. Fluid Mechanics}, {\bf 245},  (1992),  p. 401--411.
 
 \bibitem{Ben2}  T. B. Benjamin, \emph{Solitary and periodic waves of a new kind}, {\em Phil. Trans. Roy. Soc. London Ser. A} {\bf 354}, 
  (1996), p. 1775--1806.
  
  \bibitem{BL} J. L. Bona, Y. A. Li, \emph{Decay and analyticity of solitary waves}, {\em J. Math. Pures Appl.} {\bf  76}, (1997),  p. 377--430.
  
\bibitem{BC}  H.  Chen,  J. Bona,\emph{
  Existence and asymptotic properties of solitary-wave solutions of Benjamin-type equations}, {\em Adv. Differential Equations}, {\bf 3}, (1998), no.1, p.  51--84.
  

    \bibitem{KKS1} T. M. Kapitula, P. G. Kevrekidis, B. Sandstede, \emph{Counting eigenvalues
    	via Krein signature in infinite-dimensional Hamitonial systems,}  {\em Physica D}, {\bf 3-4}, (2004), p. 263--282.

    \bibitem{KKS2} T. Kapitula, P. G. Kevrekidis, B. Sandstede, \emph{ Addendum: "Counting eigenvalues via the Krein signature in
    	infinite-dimensional Hamiltonian systems'' [Phys. D 195 (2004), no. 3-4,
    	263--282] }
    {\em Phys. D}  {\bf  201}  (2005),  no. 1-2, 199--201.


\bibitem{Kap} T. Kapitula,  K. Promislow,
Spectral and dynamical stability of nonlinear waves.  {\em Applied Mathematical Sciences}, {\bf  185},  Springer, New York, 2013.

 
\bibitem{LZ} Z. Lin, C.  Zeng, {\emph Instability, index theorem, and exponential trichotomy for Linear Hamiltonian PDEs},  {\em Mem. Amer. Math. Soc.}, {\bf 275}, (2022), no.1347.


\bibitem{Lions} P.L. Lions, {\emph The concentration-compactness principle in the calculus of variations. The locally compact case, part I}, 
{\em  Annales de l'I. H. P.},  section C, tome 1, no 2 (1984), p. 109-145. 

 \bibitem{Pava1} J. A. Pava, 
\emph{Existence and stability of solitary wave solutions of the Benjamin equation.} {\em J. Differential Equations}, {\bf 152}, (1999), no.1, p. 136--159.

  \bibitem{Pel} D. Pelinovsky, \emph{Inertia law for spectral stability of solitary waves in coupled nonlinear Schr\"odinger equations.} {\em Proc. R. Soc. Lond. Ser. A Math. Phys. Eng. Sci.} {\bf  461} (2005) , no. 2055, p. 783--812.


 
\bibitem{PS18} I. Posukhovskyi,  A. Stefanov, \emph{
On the normalized ground states for the Kawahara equation and a fourth order NLS}, {\em Discrete Contin. Dyn. Syst.} {\bf 40}, 
(2020), no.7, p. 4131--4162.




\end{thebibliography}
\end{document}